\newtheorem{theorem}{Theorem}[section]
\newtheorem{lemma}[theorem]{Lemma}
\newtheorem{proposition}[theorem]{Proposition}
\newtheorem{proposition-and-definition}[theorem]{Proposition and Definition}
\newtheorem{corollary}[theorem]{Corollary}
\newtheorem{review}[theorem]{Review}
\theoremstyle{definition}
\newtheorem{definition}[theorem]{Definition}
\newtheorem{example}[theorem]{Example}
\newtheorem{remark}[theorem]{Remark}
\newtheorem{notation}[theorem]{Notation}
\newtheorem{remark-and-notation}[theorem]{Remark and Notation}
\newtheorem{notation-and-remark}[theorem]{Notation and Remark}
\numberwithin{equation}{section}
\newcommand{\cA}{ \mathcal{A} }
\newcommand{\cB}{ \mathcal{B} }
\newcommand{\bC}{ \mathbb{C} }
\newcommand{\bE}{ E }
\newcommand{\cF}{ \mathcal{F} }
\newcommand{\cK}{ \mathcal{K} }
\newcommand{\cM}{ \mathcal{M} }
\newcommand{\bN}{ \mathbb{N} }
\newcommand{\bR}{ \mathbb{R} }
\newcommand{\cV}{ \mathcal{V} }
\newcommand{\cW}{ \mathcal{W} }
\newcommand{\oneA}{ 1_{ { }_{\cA} } }
\newcommand{\zeroA}{ 0_{ { }_{\cA} } }
\newcommand{\eqdef}{ \stackrel{\mathrm{def}}{\Leftrightarrow} }
\newcommand{\muovxy}{ \mu^{( \mathrm{ov} )}_{x,y} }
\newcommand{\muovadd}{ \mu^{( \mathrm{ov} )}_{a,a+b} }
\newcommand{\muovmulp}{ \mu^{( \mathrm{ov} )}_{a,a^{1/2}ba^{1/2}} }
\newcommand{\ovxy}{ {\mathrm{o}}_{x,y} }
\newcommand{\ovadd}{ {\mathrm{o}}_{a,a+b} }
\newcommand{\ovmulp}{ {\mathrm{o}}_{a,a^{1/2}ba^{1/2}} }
\newcommand{\Bor}{ \mathrm{Bor} }
\newcommand{\Borpol}{ \mathrm{Bor}_{\mathrm{pol}} }
\newcommand{\Entry}{ \mathrm{Entry} }
\newcommand{\Spec}{ \mathrm{Spec} }
\newcommand{\ks}{k_s^{(1)}}
\newcommand{\kt}{k_t^{(2)}}
\title[Free denoising via overlap measures and c-freeness]{Free denoising via
	overlap measures and \\
	$c$-freeness techniques}
\author[M. Fevrier]{Maxime Fevrier}
\address{Maxime Fevrier: Universit\'e Paris-Saclay, CNRS, Laboratoire de math\'ematiques d’Orsay, 91405, Orsay, France. } 
\email{ maxime.fevrier@universite-paris-saclay.fr  }
\author[A. Nica]{Alexandru Nica}
\address{Alexandru Nica: Department of Pure Mathematics, 
	University of Waterloo, Ontario, Canada.}
\email{anica@uwaterloo.ca}
\thanks{AN: research supported by a Discovery Grant from NSERC, Canada.}
\author[K. Szpojankowski]{Kamil Szpojankowski}
\address{ Kamil Szpojankowski:
	Faculty of Mathematics and Information Science,
	Warsaw University of Technology, Poland.  } 
\email{kamil.szpojankowski@pw.edu.pl  }
\thanks{KSz: This research was funded in part by National Science Centre, Poland WEAVE-UNISONO grant BOOMER 2022/04/Y/ST1/00008.
	\\ For the purpose of Open Access, the authors have applied a CC-BY public copyright licence to any Author Accepted Manuscript (AAM) version arising from this submission.}
\date{\today}
\begin{document}
	
	\begin{abstract}
		We study the problem of \emph{free denoising}. For free selfadjoint random variables $a,b$,
		where we interpret $a$ as a signal and $b$ as noise, we find $\bE(a \, | \, a+b)$. To that end, we
		study a probability measure $\muovadd$ on $\bR^2$ which we call the \emph{overlap measure}. 
		We show that $\muovadd$ is absolutely continuous with respect to the product measure 
		$\mu_a\times \mu_{a+b}$. The Radon-Nikodym derivative gives direct access to $\bE(a \, | \, a+b)$. 
		We show that analogous results hold in the case of multiplicative noise when $a,b$ are positive
		and the aim is to find $\bE(a \, | \, a^{1/2}ba^{1/2})$. 
		In a parallel development we show that, for a general selfadjoint expression $P(a,b)$ 
		made with $a$ and $b$, finding $\bE(a \, | \, P(a,b))$ is equivalent to finding the distribution 
		of $P(a,b)$ in a certain two-state probability space
		$(\cA,\varphi,\chi)$, where $a,b$ are c-free with
		respect to $(\varphi,\chi)$ in the sense of Bo\.zejko-Leinert-Speicher. 
		We discuss how free denoising (which is set in the framework of an 
			abstract $W^{*}$-probability space) relates to the notion of ``matrix denoising'' 
			previously discussed in the random matrix literature.
	\end{abstract}
	
	\maketitle
	
	\section{Introduction}
	\subsection{Problem description.}
	In this paper we consider some instances (primarily the additive instance 
	and the multiplicative instance) of a problem which could go under the 
	name of {\em free denoising}.  The setting for free denoising involves 
	two freely independent selfadjoint random variables $a,b$ 
	in a tracial $W^{*}$-probability space $\left(\cA,\varphi\right)$,
	where $a$ is viewed as signal (it is ``the random variable of interest''), 
	while $b$ is viewed as noise.  In the additive version of the free 
	denoising problem we assume that the
	sum $a+b$ is given, and our goal is to find the best approximation of 
	$a$ by an element $h(a+b) \in \cA$, where $h$ is a real-valued Borel function 
	on the spectrum of $a+b$.  The multiplicative version of the problem goes 
	in the same vein, but where $a,b$ are now assumed to be positive and the task 
	is to approximate $a$ by an element $h( \,a^{1/2} b a^{1/2} \, )$. 
	In full generality, the random variable $c \in \cA$ which is assumed to be 
	given is some selfadjoint expression $c = P(a,b)$ formed with $a$ and $b$ 
	(instead of $a+b$ or $a^{1/2} b a^{1/2}$ one could have, for instance, 
	a quadratic expression such as $P(a,b) = i(ab-ba)$ or $P(a,b) = ab+ba$). 
	
	When viewed as an approximation problem with respect to the $|| \cdot ||_2$-norm
	associated to the trace $\varphi$, the above mentioned problem is turned into the 
	one of understanding what is the conditional expectation 
	$E \bigl( a \mid P(a,b) \bigr)$, where 
	$E \bigl( \cdot \mid P(a,b) \bigr) : \cA \to \cB$ is the unique 
	$\varphi$-preserving conditional expectation onto the von Neumann algebra 
	$\cB \subseteq \cA$ generated by $P(a,b)$.  In Sections 
\ref{AdditiveFormulas} and \ref{MultiplicativeFormulas}  of the 
	present paper we provide explicit formulas for such conditional expectations, 
	holding in the two main cases under consideration, $E( a \, | \, a+b)$ and respectively
	$E \left(a \, | \, a^{1/2} b a^{1/2} \right)$.  Then, for a general $c = P(a,b)$, 
	Section 
    \ref{CFreeFormulas} of the paper puts into evidence an alternative approach to free denoising
	which is found by relating it to the notion of ``conditionally free independence'' 
	of Bo\.zejko-Leinert-Speicher \cite{BozejkoLeinertSpeicher}. In the appendix we explain
	that the methodology described in this paper 
	also works in the case 
	of the multiplication of two freely independent unitaries.
	
	
	\subsection{Motivation: analogies with classical probability. } 
	In order to put things into perspective, we note here that free 
	denoising is the free probabilistic counterpart 
	of a well-known formalism of classical probability, concerning
	conditional expectations (taken in usual ``commutative'' sense) 
	$\mathbb{E} (X \, | \, Y )$, where $(X,Y)$ is a random vector in $\bR^2$.  
	Under fairly general conditions (see e.g.~the survey paper 
	\cite{ChangPollard}), the distribution $\sigma$ of $(X,Y)$ can be 
	disintegrated with respect to the 1-dimensional distribution $\nu$ of 
	$Y$, leading to a family of 1-dimensional distributions
	$( \mu_t )_{t \in \bR}$ such that for a bounded Borel function 
	$f : \bR^2 \to \bC$ we have
	\begin{equation}   \label{eqn:12a}
		\int_{\bR^2} f(s,t) \, d \sigma (s,t) = \int_{\bR} \, \Bigl[ 
		\, \int_{\bR} f(s,t) \, d \mu_t (s) \, \bigr] \, d \nu (t).
	\end{equation}
	In this setting, one gets that $\mathbb{E} (X \, | \, Y ) = h(Y)$, where we
	put
	\begin{equation}   \label{eqn:12b}
		h(t) = \int_{\bR} s \, d \mu_t (s), \ \ t \in \bR .
	\end{equation}
	
	The preceding formula shows that, in general, the initial data needed in order 
	to find $\mathbb{E} (X \, | \, Y)$ is the full joint distribution $\sigma$ of $X$ and $Y$. 
	But there exist cases when the said conditional expectation can be 
	expressed solely in terms of $Y$ and its distribution.  To illustrate this, 
	let us focus, in analogy to the previous subsection, on the case where 
	$Y = X + Z$ with $X,Z$ independent (in the commutative sense) and 
	$Z \sim \mathcal{N}(0,\sigma^2)$. 
	The distribution of $Y$ is then sure to
	have a density $f_Y$ with respect to Lebesgue measure, and the well-known 
	Tweedie's formula in statistics (see \cite{Robbins}) says that in this case one has
	\begin{align} \label{eqn:tweedie}
		\mathbb{E}(X \, | \, Y) = Y+\sigma^2 g(Y),
		\ \mbox{ where $g(t)=\tfrac{d}{dt} \log(f_Y (t))$.}  
	\end{align}
	
	\noindent
	For the free analogue of the above mentioned development, we will use a 
	sibling of the notion of joint distribution which we call ``overlap measure''
	(Section 1.4 below).  The methodology developed in connection to this yields, 
	in particular, both an additive and a multiplicative free analogue for Tweedie's 
	formula (\ref{eqn:tweedie}), where the formula arising in the additive analogue is 
	of the same nature as (\ref{eqn:tweedie}).  More
	precisely, for this additive analogue we pick $a,b \in \cA$ to be freely independent
	and selfadjoint (like at the beginning of Section 1.1 above), and we assume that 
        the distribution of $b$ is the Wigner semicircle law with variance $\sigma^2$.  
        We then find that   
	$\bE( a \, | \, a+b)=h(a+b)$ for the function $h : \bR \to \bR$ defined by 
	\begin{equation}    \label{eqn:freetweedieboxplus}
		h(t) = t - 2 \pi \, \sigma^2 \, H_{\mu\boxplus\nu}(t), 
	\end{equation}
	where $H_{\mu\boxplus\nu}$ is the Hilbert transform of the measure 
	$\mu \boxplus \nu$.  It is remarkable that, analogous to the classical case, 
	the right-hand side of (\ref{eqn:freetweedieboxplus}) depends only on the 
	distribution $\mu \boxplus \nu$ of $a+b$. The details of the derivation of 
	(\ref{eqn:freetweedieboxplus}) are presented in Example \ref{example:47} below.
	We mention that this formula had been spotted in the physics literature, 
	via a method called ``the replica trick'' (see \cite{BunAllezBouchaudPotters}).
	
	The multiplicative free analogue of Tweedie's formula is presented in 
	Example \ref{example:15} below.  As it turns out, this free analogue
	had also been spotted in previous research literature on random matrices, 
	in \cite{LedoitPeche}.
	
	
	\subsection{Relation to subordination functions.}
	Let $a,b$ be freely independent selfadjoint elements in a $W^{*}$-probability 
	space $( \cA , \varphi )$.  In \cite{Biane}, Biane showed how conditional 
	expectations of the form $E( f(a+b) \, | \, a )$ and $E( g(a^{1/2} b a^{1/2}) \, | \, a )$ 
	can be treated by using the notion of additive and respectively of multiplicative 
	{\em subordination function} for the Cauchy transforms of the distributions of interest
	(a precise review of subordination terminology and some statements of results appear
	in Section \ref{SubordReview} below).  The development of the 
        present paper makes significant use of
	subordination functions and of some related Feller-Markov kernels that were identified 
	in \cite{Biane}; but we are in a certain sense ``going in reverse'' from how Biane did, 
	since we are looking at $E( f(a) \, | \, a+b )$ rather than $E( f(a+b) \, | \, a )$, and 
	likewise in the multiplicative case.  The outcome of the calculations is quite different.  
	For example, as noted by Biane in \cite[Remark 3.3]{Biane}, it is always the case that 
	$E( \, (a+b)^n \, | \, a )$ is a polynomial of degree $n$ in $a$; in contrast to that,
	a conditional expectation $E( a^n \, | \, a+b)$ is of the form $h(a+b)$, where $h$ 
	(a function on the spectrum of $a+b$) may not be a polynomial. In fact,
	Bo\.zejko and Bryc \cite{BozejkoBryc} showed that merely assuming $E(a \, | \, a+b)$ and 
	$E(a^2 \, | \, a+b)$ to be linear and respectively quadratic polynomials of $a+b$ forces
	$a,b$ to have some special distributions, coming from the free Meixner family.
	
	On the general subject of work which uses subordination and has a topic related to 
	the one of the present paper, we mention here the work \cite{ArizmendiTarragoVargas}, 
	where the authors study a problem called ``free deconvolution'', which is about retrieving
	the distribution of $a$ when the distribution of $a+b$ and the distribution of the 
	noise $b$ are given.  Free denoising is different from free deconvolution: it aims to
	find the element $h(a+b)$ which is at minimal $|| \cdot ||_2$-distance from $a$, and where 
	(as easily seen by computing variance in simple examples with $a, b$ identically distributed)
	$h(a+b)$ will generally not have the same distribution as $a$.  
	

	\subsection{Overlap measure and overlap function.}
	In order to discuss the non-commutative analogue of the disintegration procedure 
        from Equations (\ref{eqn:12a}), (\ref{eqn:12b}) of Section 1.2, one replaces the 
	random vector $(X,Y)$ considered there with a pair of selfadjoint elements $x,y$ 
        in a $W^{*}$-probability space $( \cA , \varphi )$.  The notion of joint distribution 
        of $x$ and $y$ is usually defined, in this non-commutative setting, as a linear 
        functional $\mu_{x,y} : \mathcal{P} \to \bC$, where $\mathcal{P}$ is the algebra 
	of non-commutative polynomials in two indeterminates and we have 
	$\mu_{x,y}(P)=\varphi(P(x,y))$ for $P \in \mathcal{P}$.  
For the purpose of finding the conditional expectation $E(x \, | \, y )$, the 
functional $\mu_{x,y}$ contains too much information; it is convenient 
to use in its place a bona fide measure $\muovxy$ on the Borel sigma-algebra 
of $\bR^2$, which we call the {\em overlap measure} of $x$ and $y$, and is
defined via the requirement that
\[
\int_{\bR^2}f(s)g(t) d \muovxy (s,t) =  \varphi(f(x)g(y)), 
\ \ \forall \, f,g \in \Bor_b ( \bR ),
\]
where $\Bor_b ( \bR )$ is the space complex-valued bounded Borel functions on $\bR$. 

In the $W^{*}$-framework, the measure $\muovxy$ has been in use for 
quite some time, and is presented for instance in Section I.1 of \cite{Connes}.  
It is worth observing that its construction can be performed more generally, in any 
$*$-probability space $( \cA , \varphi )$ where we have an ability to do Borel 
functional calculus for selfadjoint elements; we refer to such a structure by calling
it a ``Borel-ncps'', see Sections \ref{section:X} and \ref{section:Y} below.
Using the Borel-ncps framework is relevant, because it allows one to rigorously 
consider overlap measures in a $*$-probability space of random $N \times N$ matrices 
(cf.~Example \ref{example:X9}), and to study the weak convergence of such overlap 
measures in the large $N$ limit.  Doing so helps explaining why some explicit formulas
of free denoisers arising in the $W^{*}$-framework coincide with formulas that had
been previously found, via random matrix techniques, in papers about matrix denoising. 
We comment on this in the final Section \ref{Sec:MatrixDenoising} of the paper. 

The two marginals of $\muovxy$ turn out to be the distributions $\mu_x$ and $\mu_y$
of the two selfadjoint elements $x, y$ we started with.  If $\muovxy$ happens to be 
absolutely continuous with respect to the direct product $\mu_x \times \mu_y$, then
the Radon-Nikodym derivative $d \muovxy / (d \mu_x \times d \mu_y)$ will be referred 
to as the {\em overlap function} of $x$ and $y$.  We note that $\muovxy$ does not 
always have to be absolutely continuous with respect to $\mu_x \times \mu_y$ (e.g.~when
$y=x$, where $x$ has absolutely continuous distribution with respect to Lebesgue measure);
but this will nevertheless be the case in several relevant examples.

Our motivation for the ``overlap'' terminology comes from the case when
$\cA = \cM_N ( \bC )$, considered with the normalized trace 
$\varphi = \frac{1}{N} \mathrm{Tr}$, when the pair of selfadjoint elements 
$x,y \in \cA$ to be considered is thus a pair of complex Hermitian matrices. 
In this case, the overlap function $\ovxy$ is always sure 
to exist.  It is defined on $\Spec (x) \times \Spec (y)$ where 
$\Spec (x), \Spec (y) \subseteq \bR$ are the sets of eigenvalues of the 
matrices $x$ and $y$, and its values are computed in terms of quantities 
$| \langle u,v \rangle |^2$, where $u,v \in \bC^N$ are eigenvectors 
of $x$ and respectively of $y$ (cf.~Example \ref{example:Y5} below). 
For instance: if $\lambda, \rho$ are eigenvalues of multiplicity $1$ of 
$x$ and $y$, respectively, and if $u,v \in \bC^N$ are unit vectors such that 
$x(u) = \lambda u$ and $y(v) = \rho v$, then we get 
$\ovxy ( \lambda , \rho ) = N \, | \langle u, v \rangle |^2$
-- this is precisely the kind of quantity called an overlap in the 
physics literature. 

	%
	%

The fact that $\muovxy$ is compactly supported on $\Spec (x) \times \Spec (y)$ 
holds in general in the $W^{*}$-framework, where 
$\Spec (x), \Spec (y) \subseteq \bR$ are now the spectra of the selfadjoint 
elements $x, y \in \cA$.  As mentioned at the beginning of the subsection, our
reason for considering $\muovxy$ in this framework is that it can be used to 
compute the conditional expectation $E( x \, | \, y )$ via an analogue of the 
procedure from Equations (\ref{eqn:12a}), (\ref{eqn:12b}): disintegrate $\muovxy$ 
into a family of probability measures $( \mu_t )_{t \in \Spec (y)}$ with respect 
to the distribution of $y$ (where every $\mu_t$ is a probability measure on $\Spec (x)$),
then write $E(x \, | \, y )$ as a function of $y$ in the same way as in (\ref{eqn:12b}), 
$E( x \, | \, y ) = h(y)$ with $h(t) := \int_{\Spec (x)} s \, d \mu_t (s).$
For the details of how this goes, see Section \ref{section:Z} below, and particularly
Proposition \ref{prop:Z3}.


	\subsection{Description of results -- additive case.}
        In the case when $x=a$ and $y=a+b$, with $a,b$ freely independent selfadjoint 
        random variables, we show that the absolute continuity 
        $\muovxy \ll \mu_x\times\mu_y$ is always sure to hold, and we give an explicit
        description of the overlap function $d \muovxy / d ( \mu_x\times\mu_y )$.
        This is obtained by using the relation to subordination functions mentioned 
        above in Section 1.3, and by relying on a known body of results 
        (cf.~\cite{Belinschi, Belinschi2, BelinschiBercoviciAdditiveSemigroup, BercoviciVoiculescu}) 
        about atoms and regularity for a free additive convolution $\mu \boxplus \nu$.  
        The precise description of the result is given as Theorem \ref{thm:1} below.  The 
        statement of Theorem \ref{thm:1}.$2^o$ uses some basic structural details of
        $\mu \boxplus \nu$ -- for their review, the reader can have an advanced look at the 
        Section \ref{AdditiveFormulas} of the paper (and specifically at the 
        ``Review \ref{review:43}'' item over there).

        \vspace{6pt}
        
        \begin{theorem}  \label{thm:1}
	Let $a,b$ be freely independent selfadjoint elements in a 
        $W^{*}$-pro\-ba\-bi\-li\-ty space $( \cA, \varphi )$. Let $\mu$ and respectively $\nu$
        be the distributions of $a$ and $b$ (with respect to $\varphi$), and assume that neither
        of $\mu, \nu$ is a point mass.  Note that, due to the free independence of
        $a$ and $b$, the distribution of $a+b$ is equal to $\mu \boxplus \nu$.  We let 
        $\omega : \bC^{+} \to \bC^{+}$ be the subordination function of $\mu \boxplus \nu$
        with respect to $\mu$ and for every $t \in \bR$ we denote by $\omega (t)$ (known to
        exist in $\bR \cup \bC^{+} \cup \{ \infty \}$) the limiting value of $\omega$ at the 
        point $t$.
        
        \vspace{6pt}

        \noindent
        $1^o$ Let $\muovadd$ be the overlap measure (as discussed in Section 1.4) for the 
        elements $a$ and $a+b$.  One has $\muovadd \ll \mu \times ( \mu \boxplus \nu )$, 
        and therefore it makes sense to consider the overlap function 
        $\ovadd = d \muovadd / d \bigl( \, \mu \times ( \mu \boxplus \nu ) \, \bigr) $.

        \vspace{6pt}

        \noindent
        $2^o$ Let $\Gamma = \{ \gamma_1, \ldots , \gamma_n \}$ be the 
        (necessarily finite) set of atoms of $\mu \boxplus \nu$, and for every $1 \leq i \leq n$
        consider the decomposition (known to exist and be unique) 
        $\gamma_i = \alpha_i + \beta_i$, with $\alpha_i$ an atom of $\mu$ and 
        $\beta_i$ an atom of $\nu$, such that 
        $\mu ( \, \{ \alpha_i \} \, ) + \nu ( \, \{ \beta_i  \} \, ) > 1$.  
        On the other hand let $( \mu \boxplus \nu )^{\mathrm{ac}}$ denote the 
        absolutely continuous part of $\mu \boxplus \nu$ with respect to Lebesgue measure. 
        It is known that one can write the support of $\mu \boxplus \nu$ (that is, the 
        set $\Spec (a+b)$) as a disjoint union $\Gamma \cup U \cup Z$, 
        with $( \mu \boxplus \nu) (Z) = 0$ and where $U$ is a relatively open subset 
        of $\Spec (a+b)$, carrying a continuous function 
        $f_{\mu \boxplus \nu} : U \to (0, \infty )$ which is a version of the 
        Radon-Nikodym derivative $d ( \mu \boxplus \nu )^{\mathrm{ac}} (t) / dt$. 
        Moreover, $U$ can be picked such that $\omega (t) \in \bC^{+}$, $\forall \, t \in U$.

        With the notation introduced above, one can describe the overlap function 
        $\ovadd$ from part $1^o$ by indicating (in a $( \mu \boxplus \nu)$-almost 
        everywhere sense) what are the partial functions $\ovadd ( \cdot , t )$, 
        as follows.

        \vspace{6pt}
        
        -- For $t = \gamma_i \in \Gamma$ (with $1 \leq i \leq n$) one has
        \begin{equation}  \label{eqn:11a}
        \ovadd (s, \gamma_i ) = \left\{   \begin{array}{ll}
        1/ \mu ( \, \{ \alpha_i \} \, ),  & \mbox{ if $s = \alpha_i$,}  \\
        0,                                & \mbox{ otherwise}
        \end{array}  \right\},  \ \ s \in \Spec (a).
        \end{equation}

        \vspace{6pt}
        
        -- For $t \in U$ one has
        \begin{equation}  \label{eqn:11b}
        \ovadd (s, t) = 
        - \frac{1}{\pi \cdot f_{\mu \boxplus \nu} (t)} \, \mathrm{Im}
        \bigl( \, \frac{1}{ \omega (t) - s } \, \bigr), \ \ s \in \Spec (a).
        \end{equation}
        \end{theorem}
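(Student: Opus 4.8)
The plan is to collapse the whole statement into a single resolvent identity furnished by Biane's subordination theorem, and then to read off $\ovadd$ by Stieltjes inversion. Writing $G_\mu (z) = \int_{\bR} (z-s)^{-1}\, d\mu (s)$ for the Cauchy transform, the subordination relation reads $G_{\mu \boxplus \nu}(z) = G_\mu ( \omega (z))$, and Biane's theorem gives, for the $\varphi$-preserving conditional expectation onto $W^{*}(a)$,
\[
E \bigl( \, (z-(a+b))^{-1} \, | \, a \, \bigr) = ( \omega (z) - a )^{-1}, \qquad z \in \bC^{+}.
\]
Multiplying on the left by $f(a)$ for a real bounded Borel $f$ (so $f(a) \in W^{*}(a)$) and applying $\varphi$, the defining property of the overlap measure converts this into the fundamental identity
\[
\int_{\bR^2} \frac{f(s)}{z-t}\, d\muovadd (s,t) = \int_{\Spec (a)} \frac{f(s)}{\omega (z) - s}\, d\mu (s), \qquad z \in \bC^{+},
\]
which I regard as the single input driving the proof.

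For each fixed $f$ I introduce the complex measure $\rho_f$ on $\bR$ given by $\rho_f (B) = \int_{\bR \times B} f(s)\, d\muovadd (s,t)$. Since the $t$-marginal of $\muovadd$ is $\mu \boxplus \nu$, the total-variation bound $|\rho_f|(B) \le \| f \|_\infty \, ( \mu \boxplus \nu )(B)$ yields $\rho_f \ll \mu \boxplus \nu$. The left-hand side of the fundamental identity is precisely the Cauchy transform $G_{\rho_f}$, so I recover $\rho_f$ from $G_{\rho_f}(z) = \int ( \omega (z) - s )^{-1} f(s)\, d\mu (s)$ by Stieltjes inversion. By the cited regularity results, $\mu \boxplus \nu$ has no singular-continuous part and its atomic and absolutely continuous parts live on $\Gamma$ and on $U$ respectively (while $\muovadd ( \bR \times Z ) = ( \mu \boxplus \nu )(Z) = 0$ kills the $Z$-contribution); because $\rho_f \ll \mu \boxplus \nu$, it is therefore enough to determine the masses $\rho_f ( \, \{ \gamma_i \} \, )$ and the Lebesgue density of $\rho_f$ on $U$.

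On $U$ one has $\omega (t) \in \bC^{+}$, so taking $z = t + i\varepsilon \to t$ and passing the limit under the integral (dominated convergence, $\omega$ having boundary value in $\bC^{+}$) shows the Lebesgue density of $\rho_f$ at $t$ equals $-\tfrac{1}{\pi}\, \mathrm{Im} \int ( \omega (t) - s )^{-1} f(s)\, d\mu (s)$; dividing by $f_{\mu \boxplus \nu}(t)$ produces $\frac{d\rho_f}{d( \mu \boxplus \nu )}(t) = \int_{\Spec (a)} f(s)\, \ovadd (s,t)\, d\mu (s)$ with $\ovadd$ given by (\ref{eqn:11b}). At an atom $\gamma_i$ I use the structural fact $\omega ( \gamma_i ) = \alpha_i$: splitting off the $\alpha_i$-atom of $\mu$ and multiplying $G_{\rho_f}( \gamma_i + i\varepsilon )$ by $i\varepsilon$ gives $\rho_f ( \, \{ \gamma_i \} \, ) = f( \alpha_i )\, \mu ( \, \{ \alpha_i \} \, ) \cdot \lim_{\varepsilon \to 0^{+}} \frac{i\varepsilon}{\omega ( \gamma_i + i\varepsilon ) - \alpha_i}$, and matching the $\gamma_i$-atom masses in $G_{\mu \boxplus \nu} = G_\mu \circ \omega$ evaluates the remaining limit as $( \mu \boxplus \nu )( \, \{ \gamma_i \} \, ) / \mu ( \, \{ \alpha_i \} \, )$. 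Hence $\rho_f ( \, \{ \gamma_i \} \, ) = ( \mu \boxplus \nu )( \, \{ \gamma_i \} \, ) \int f(s)\, \ovadd (s, \gamma_i )\, d\mu (s)$ with $\ovadd$ given by (\ref{eqn:11a}). Reassembling the atomic and absolutely continuous pieces yields
\[
\int_{\bR^2} f(s) g(t)\, d\muovadd (s,t) = \int_{\bR^2} f(s) g(t)\, \ovadd (s,t)\, d \bigl( \mu \times ( \mu \boxplus \nu ) \bigr) (s,t)
\]
for all bounded Borel $f,g$, which simultaneously gives $\muovadd \ll \mu \times ( \mu \boxplus \nu )$ and identifies the overlap function.

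The main obstacle is the analytic bookkeeping at the atoms. Establishing that $\omega$ has boundary value $\alpha_i$ with the correct first-order behaviour, and in particular that the non-atomic remainder $i\varepsilon \int ( \omega ( \gamma_i + i\varepsilon ) - s )^{-1} f(s)\, d \bigl( \mu - \mu ( \, \{ \alpha_i \} \, ) \delta_{\alpha_i} \bigr) (s)$ tends to $0$ while $i\varepsilon / ( \omega ( \gamma_i + i\varepsilon ) - \alpha_i )$ has the claimed nonzero limit, is where the regularity and atom-structure theory for $\mu \boxplus \nu$ (as gathered in Review \ref{review:43}) does the real work; the same body of results supplies the domination needed to pass to the boundary on $U$. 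Once these boundary facts are in hand, the remaining steps are routine Stieltjes inversion and a separation-of-measures argument.
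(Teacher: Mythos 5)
Your proof is correct, and it runs on the same analytic engine as the paper's: Biane's subordination identity, the regularity facts collected in Review \ref{review:43} (continuity of $\omega$ on $U$, nontangential convergence of $\omega$ to $\alpha_i$ at the atoms, absence of a singular continuous part), Stieltjes inversion on $U$, and the atom-mass bookkeeping through $G_{\mu\boxplus\nu}=G_\mu\circ\omega$. Where you genuinely diverge is in the measure-theoretic packaging. The paper identifies the disintegration of $\muovadd$ over the \emph{first} variable with Biane's kernel measures $k_s$ (Remark \ref{rem:41}), proves in Proposition \ref{prop:44} that $k_s\ll\mu\boxplus\nu$ with density $o(s,\cdot)$ for $\mu$-almost every $s$, and then reassembles by Fubini (Theorem \ref{thm:45}); the delicate point there is that $k_s$ might a priori charge the $(\mu\boxplus\nu)$-null set $Z$, which the paper rules out for almost every $s$ via the disintegration argument of Remark \ref{rem:Z4}. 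You instead integrate out the first variable against $f$ and work with the signed measures $\rho_f$, whose absolute continuity with respect to $\mu\boxplus\nu$ is immediate from the marginal bound $|\rho_f|\le\Vert f\Vert_\infty\,(\mu\boxplus\nu)$, so the null set $Z$ and the almost-everywhere quantifier in $s$ never enter. The cost is minor: you must run the inversion and atom limits for signed rather than probability measures (harmless, by splitting $f=f^+-f^-$), and you need the identity for all bounded Borel $f$ before the product formula can be reassembled. The benefit is a shorter, cleaner path to the absolute continuity claim of part $1^o$; conversely, the paper's fiberwise Proposition \ref{prop:44} yields the explicit conditional distributions $k_s$ themselves, which it reuses elsewhere. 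Finally, your deferral of the first-order behaviour of $\omega$ at $\gamma_i$ (nontangential convergence to $\alpha_i$ and vanishing of the non-atomic remainder) to the cited regularity theory is at the same level of rigor as the paper's own proof, which quotes Bercovici--Voiculescu for exactly these facts in its treatment of the atoms.
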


        \vspace{10pt}

      Upon combining Theorem \ref{thm:1} with a calculation of conditional expectation 
      via disintegration of the overlap measure (as mentioned in Section 1.4), we find: 

      \vspace{6pt}
      
      \begin{theorem}  \label{thm:12}
      Consider the notation of Theorem \ref{thm:1}, and let
      $f : \Spec(a) \to \bR$ be a bounded Borel function.  One has 
			$\bE[ f(a) \mid a +b ] = h(a+b),$
      where $h:\Spec(a+b)\to \bR$ is defined (in $( \mu\boxplus\nu )$-almost everywhere sense) by
        \begin{equation}   \label{eqn:12x}
			h(t)=\begin{cases}
				-\frac{1}{\pi}\frac{1}{f_{\mu\boxplus\nu}(t)}\operatorname{Im} 
				\left(\int_{\bR}f(s)  \frac{1}{\omega(t)-s} d\mu(s)\right), &\mbox{ if } t\in U,  \\
				f( \alpha_i ), &\mbox{ if } t = \gamma_i, \ 1 \leq i \leq n.
			\end{cases}
	\end{equation}
	 \end{theorem}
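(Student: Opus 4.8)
The plan is to obtain Theorem \ref{thm:12} as a direct consequence of Theorem \ref{thm:1}, by feeding the explicit overlap function into the general disintegration recipe for conditional expectations recorded in Section \ref{section:Z}. First I would invoke Proposition \ref{prop:Z3}: disintegrating $\muovadd$ with respect to its second marginal (which, by the remarks of Section 1.4, is the distribution $\mu\boxplus\nu$ of $a+b$) yields a family $(\mu_t)_{t\in\Spec(a+b)}$ of probability measures on $\Spec(a)$ for which $\bE[f(a)\mid a+b]=h(a+b)$ with
\[
h(t)=\int_{\Spec(a)}f(s)\,d\mu_t(s).
\]
It therefore suffices to identify the $\mu_t$ and to evaluate this integral.

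The identification of the conditional measures is where Theorem \ref{thm:1}.$1^o$ enters. Since $\muovadd\ll\mu\times(\mu\boxplus\nu)$ with Radon--Nikodym derivative $\ovadd$, an application of Fubini shows that for $(\mu\boxplus\nu)$-almost every $t$ the conditional measure is $d\mu_t(s)=\ovadd(s,t)\,d\mu(s)$; here the fact that the second marginal is exactly $\mu\boxplus\nu$ forces $\int_{\Spec(a)}\ovadd(s,t)\,d\mu(s)=1$, so that each $\mu_t$ is already a probability measure and no renormalization is needed. Plugging this into the display above gives $h(t)=\int_{\Spec(a)}f(s)\,\ovadd(s,t)\,d\mu(s)$, and it remains only to insert the two branches of Theorem \ref{thm:1}.$2^o$. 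For $t\in U$ I substitute (\ref{eqn:11b}); as $f$ is real-valued and the scalar $-1/(\pi f_{\mu\boxplus\nu}(t))$ does not depend on $s$, the $\bR$-linearity of $\operatorname{Im}$ lets me pull the imaginary part out of the integral, producing precisely the first branch of (\ref{eqn:12x}). For $t=\gamma_i$ I substitute (\ref{eqn:11a}): the resulting $\mu_{\gamma_i}$ has $\mu_{\gamma_i}(\{\alpha_i\})=\mu(\{\alpha_i\})^{-1}\,\mu(\{\alpha_i\})=1$, hence $\mu_{\gamma_i}=\delta_{\alpha_i}$ and $h(\gamma_i)=f(\alpha_i)$, which is the second branch.

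I expect the main obstacle to be bookkeeping rather than hard analysis, since the analytic content is already carried by Theorem \ref{thm:1}. The step deserving the most care is the passage from the bivariate absolute continuity to the conditional densities: one must check that disintegrating in the $t$-coordinate returns $\ovadd(\cdot,t)$ as the density in the $s$-coordinate, and that the exceptional null set $Z$ together with the ``almost everywhere'' qualifier in Theorem \ref{thm:1}.$2^o$ are compatible with the ``$(\mu\boxplus\nu)$-almost everywhere'' sense in which $h$ is asserted to be defined. Both points are standard measure theory once the marginal identity $\int_{\Spec(a)}\ovadd(s,t)\,d\mu(s)=1$ is in hand, and pulling $\operatorname{Im}$ through the integral is justified by the boundedness of $f$ together with the fact that on $U$ one has $\omega(t)\in\bC^{+}$, which keeps the integrand $1/(\omega(t)-s)$ uniformly bounded in $s$.
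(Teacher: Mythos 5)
Your proposal is correct and takes essentially the same route as the paper's own proof (given as Corollary \ref{cor:46}): both rest on the product-density form of $\muovadd$ established in Theorem \ref{thm:45} (Theorem \ref{thm:1}.$1^o$), Fubini's theorem, and the characterization of $\bE(\cdot \mid a+b)$, the only cosmetic difference being that you route the argument through Proposition \ref{prop:Z3} by identifying the conditional measures $d\mu_t(s)=\ovadd(s,t)\,d\mu(s)$, whereas the paper verifies the defining identity $\varphi(f(a)g(a+b))=\varphi(h(a+b)g(a+b))$ directly with $h(t)=\int f(s)\,\ovadd(s,t)\,d\mu(s)$. Your evaluation of the two branches (pulling $\operatorname{Im}$ out of the integral for $t\in U$, and $\mu_{\gamma_i}=\delta_{\alpha_i}$ at atoms, consistent with $\omega(\gamma_i)=\alpha_i$) matches the paper's conclusion.
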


       \vspace{10pt}
    
	Moreover, when the function $f$ from Theorem \ref{thm:12} is $f(s)=s$, one can 
        further process the formula (\ref{eqn:12x}), as follows.

        \vspace{6pt}

        \begin{proposition}\label{prop:13}
	With the notation of Theorem \ref{thm:1}, one has $\bE(a \, | \, a+b)=h(a+b)$ 
        with $h:\Spec(a+b)\to \mathbb{R}$ defined 
        (in $( \mu\boxplus\nu )$-almost everywhere sense) by
		\begin{equation}   \label{eqn:13x}
			h(t)=\begin{cases}
				-\frac{\operatorname{Im}\left(\omega(t)
               \, G_{\mu}( \, \omega (t) \, ) \right)}{\pi \, f_{\mu\boxplus\nu}(t)}, 
                                                             &\mbox{ if } t\in U,  \\
				\alpha_i, &\mbox{ if } t = \gamma_i, \ 1 \leq i \leq n,
			\end{cases}
		\end{equation}
        where $G_{\mu} : \bC^{+} \to \bC^{-}$ is the Cauchy transform of $\mu$.
	\end{proposition}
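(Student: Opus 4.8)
The plan is to specialize the formula (\ref{eqn:12x}) of Theorem \ref{thm:12} to the case $f(s)=s$ and then simplify the two branches. The atomic branch is immediate: for $t=\gamma_i$ the formula gives $f(\alpha_i)=\alpha_i$, which is exactly the value claimed in the lower branch of (\ref{eqn:13x}). Consequently the entire content of the proposition lies in rewriting the integral appearing in the branch $t\in U$ so as to recognize the Cauchy transform $G_{\mu}$.

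For $t\in U$, recall from Theorem \ref{thm:1}.$2^o$ that $\omega(t)\in\bC^{+}$, so that $|\omega(t)-s|\ge\operatorname{Im}(\omega(t))>0$ for all $s\in\bR$; since $\mu$ is compactly supported, the integrand $s/(\omega(t)-s)$ is bounded and the integral in (\ref{eqn:12x}) converges. The key step is the elementary algebraic identity
\[
\frac{s}{\omega(t)-s}=-1+\frac{\omega(t)}{\omega(t)-s},\qquad s\in\bR,
\]
which, upon integration against the probability measure $\mu$ and using the definition $G_{\mu}(z)=\int_{\bR}(z-s)^{-1}\,d\mu(s)$, yields
\[
\int_{\bR}\frac{s}{\omega(t)-s}\,d\mu(s)=-1+\omega(t)\,G_{\mu}(\omega(t)).
\]

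Taking imaginary parts, the real constant $-1$ drops out, so that
\[
\operatorname{Im}\Bigl(\int_{\bR}\frac{s}{\omega(t)-s}\,d\mu(s)\Bigr)=\operatorname{Im}\bigl(\omega(t)\,G_{\mu}(\omega(t))\bigr).
\]
Substituting this back into the $t\in U$ branch of (\ref{eqn:12x}), together with the prefactor $-1/(\pi f_{\mu\boxplus\nu}(t))$, reproduces exactly the upper branch of (\ref{eqn:13x}). I do not anticipate a genuine obstacle in this argument: the computation is a one-line manipulation of the Cauchy transform, and the only point requiring care is the convergence of the integral and the interchange of $\operatorname{Im}$ with integration, both of which are guaranteed by the compact support of $\mu$ together with the fact that $\omega(t)\in\bC^{+}$ keeps the denominator bounded away from zero.
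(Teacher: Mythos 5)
Your proposal is correct and follows essentially the same route as the paper's own proof (Proposition \ref{prop:47}): specialize the general conditional-expectation formula to $f(s)=s$, apply the identity $\frac{s}{\omega(t)-s} = \frac{\omega(t)}{\omega(t)-s}-1$, integrate against $\mu$ to recognize $G_{\mu}(\omega(t))$, and note that the real constant vanishes under $\operatorname{Im}$, while the atomic branch is immediate since $\omega(\gamma_i)=\alpha_i$. The only cosmetic difference is that the paper additionally rewrites $G_{\mu}(\omega(t))$ as $G_{\mu\boxplus\nu}(t)$ via the subordination relation, which the statement as given does not require.
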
 

        %
        %
        %
        %

	\vspace{10pt}
    
	\begin{remark}    \label{rem:14}
        $1^o$ A relevant fact for our considerations is that, in the setting of 
        Theorem \ref{thm:1}, the Cauchy transform 
        $G_{\mu \boxplus \nu} : \bC^{+} \to \bC^{-}$ has continuous 
        extension to $\bR$; as a consequence, the subordination relation 
        $G_{\mu \boxplus \nu} (z) = G_{\mu} \bigl( \, \omega (z) \, \bigr)$, 
        $z \in \bC^{+}$, extends to 
        \begin{equation}   \label{eqn:14x}
        G_{\mu \boxplus \nu} (t) 
        = G_{\mu} \bigl( \, \omega (t) \, \bigr), \ \ t \in U.
        \end{equation}
        Another useful fact concerning the extension of $G_{\mu \boxplus \nu}$ to $U$, 
        which appears in connection to the so-called Stieltjes inversion procedure for 
        $\mu \boxplus \nu$, is that the Radon-Nikodym derivative $f_{\mu \boxplus \nu}$
        used in the preceding statements of results can be written as
        \begin{equation}   \label{eqn:14y}
        f_{\mu \boxplus \nu} (t) = - \frac{1}{\pi} 
        \, \mathrm{Im} \bigl( \, G_{\mu \boxplus \nu} (t) \, \bigr), \ \ t \in U. 
        \end{equation}
        With these observations, the first branch in Equation (\ref{eqn:13x}) of the 
        preceding proposition can be put in the form
        \[
        \frac{\operatorname{Im} \bigl(\omega(t)
               \, G_{\mu \boxplus \nu }(t) \bigr)}{ \mathrm{Im} \bigl( 
                             \, G_{\mu\boxplus\nu}(t) \, \bigr) }, \ \ t\in U.
        \]
        It is pleasing that, upon doing some parallel considerations on the 
        second branch of (\ref{eqn:13x}), one can arrive to write an equation of the form 
			\begin{equation}   \label{eqn:14a}
            h(t) = \lim_{\varepsilon \to 0} 
            \frac{\operatorname{Im} \bigl( \omega(t + i \varepsilon)
               \, G_{\mu \boxplus \nu }(t + i \varepsilon) \bigr)}{ \mathrm{Im} 
                                 \bigl( \, G_{\mu\boxplus\nu}(t + i \varepsilon) \, \bigr) }, 
			\end{equation}
which covers at the same time the atomic and the absolutely continuous part 
in the description of $h(t)$.  For the details of how this goes, 
we refer to Subsection \ref{subsec:continuity} below. 
		
\vspace{6pt}

            \noindent
            $2^o$ In the special case when $b$ is a centred semicircular element, 
            Equation (\ref{eqn:13x}) leads to the free additive analogue of Tweedie's 
            formula which was announced in Subsection 1.2.
            The details of how this goes are given in Example \ref{example:47} below.
	\end{remark}
	
	
	\subsection{Description of results -- multiplicative case.}
	Results analogous to those of the preceding subsection hold in connection 
        to the multiplication of two freely independent random variables $a$ and $b$,
	in two cases -- when $a,b$ are assumed to be positive, 
	and respectively when $a,b$ are assumed to be unitary.
	We will focus on the positive case, which is presented in this subsection and 
        then detailed in Section 5 below.  
	For the unitary case, a brief exposition of the relevant statements appears
	in Appendix A at the end of the paper.
	
	Concerning the multiplication of freely independent positive random variables, 
        we prove the following analogue of Theorem \ref{thm:1}.
	
	\vspace{6pt}
	
	\begin{theorem}
		Let $a,b$ be freely independent positive random variables in a $W^{*}$-probability space $( \cA, \varphi )$. Let $\mu$ and respectively $\nu$ be the distributions of $a$ and $b$ with respect to $\varphi$, and assume that neither of $\mu, \nu$ is a point mass. Denote by $f_{\mu\boxtimes\nu}$
		the density of the absolutely continuous part of $\mu\boxtimes\nu$ and by 
		$\omega$ the continuation to the real line of the subordination function of 
		$\mu\boxtimes \nu$ with respect to $\mu$. Then there exists a closed 
		set $0 \in E \subseteq [0, \infty )$, of Lebesgue measure zero, such that  
		\begin{align*}
			\ovmulp(s,t)=
			\begin{cases}
				-\frac{1}{\pi} \frac{1}{tf_{\mu\boxtimes\nu}(t)}
				\operatorname{Im}\left(\frac{1}{1-\omega(1/t)s}\right)\quad 
				&\mbox{ if } s\in\mathrm{supp}(\mu)\\ 
				&\mbox{ and } t\in \mathrm{supp}(\mu\boxtimes\nu)^{ac}\setminus E,\\
				\frac{1}{\mu(\{s\})} 1_{\omega(1/t)=1/s}\qquad 
				&\mbox{ if } s\in\mathrm{supp}(\mu)\\ 
				&\mbox{ and $t>0$ is an atom of $\mu\boxtimes\nu$},\\
				\frac{1}{\nu(\{0\})}\frac{1}{1-s \psi_\mu^{-1}(\nu(\{0\})-1)}1_{\nu(\{0\})>\mu(\{0\})} \quad 
				&\mbox{ if } s\in\mathrm{supp}(\mu)\setminus\{0\}, t=0,\\
				\frac{1}{\mu\boxtimes\nu(\{0\})}1_{\mu\boxtimes\nu(\{0\})>0} \qquad 
				&\mbox{ if } s=t=0,
			\end{cases}
		\end{align*}
		defines $\mu\times\mu\boxtimes\nu$-almost everywhere a function $\ovmulp:\Spec(a)\times \Spec(a^{1/2}ba^{1/2})\to \bR_+$. Then
		$d\muovmulp(s,t)=\ovmulp(s,t) \mu(ds)\,\mu\boxtimes\nu(dt)$. In particular $\muovmulp\ll \mu\times \mu\boxtimes\nu$ and $\ovmulp$ is the overlap function of $a$ and $a^{1/2}ba^{1/2}$.
	\end{theorem}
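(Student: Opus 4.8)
The plan is to follow the template of the additive case (Theorems \ref{thm:1} and \ref{thm:12}), with multiplicative subordination replacing additive subordination. The engine is the defining relation of the overlap measure combined with the multiplicative version of Biane's description of the conditional expectation onto the von Neumann algebra generated by $a$. Concretely, for a spectral parameter $z$ in the appropriate domain I would use the formula (in the form established by Biane \cite{Biane})
\[
E\bigl( (1 - z\, a^{1/2}ba^{1/2})^{-1} \mid a \bigr) = (1 - \omega(z)\, a)^{-1},
\]
where $\omega$ is the subordination function of $\mu\boxtimes\nu$ with respect to $\mu$. Multiplying by $f(a)$, and using the module property of the conditional expectation together with $\varphi(X)=\varphi(E(X\mid a))$, gives
\[
\int_{\bR^2} f(s)\,\frac{1}{1-zt}\, d\muovmulp(s,t)
= \int_{\bR} \frac{f(s)}{1-\omega(z)s}\, d\mu(s)
\]
for every bounded Borel $f$. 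Writing $\rho_f$ for the pushforward to the $t$-axis of the signed measure $f(s)\,d\muovmulp(s,t)$, the left-hand side is a moment-type transform of $\rho_f$; after the reparametrization $z=1/w$ this identifies the Cauchy transform $G_{\rho_f}(w)$ with $\tfrac{1}{w}\int_{\bR} f(s)\,(1-\omega(1/w)s)^{-1}\, d\mu(s)$.

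The core of the argument is then a Stieltjes inversion carried out in the variable $t$. On the absolutely continuous part of $\mu\boxtimes\nu$, taking boundary values $w\to t\in(0,\infty)$ and using $\tfrac{d\rho_f^{\mathrm{ac}}}{dt}(t)=-\tfrac1\pi\operatorname{Im}G_{\rho_f}(t)$ yields, since $t$ is real,
\[
-\frac{1}{\pi t}\int_{\bR} f(s)\,\operatorname{Im}\!\Bigl(\frac{1}{1-\omega(1/t)s}\Bigr)\, d\mu(s)
= \Bigl( \int_{\bR} f(s)\,\ovmulp(s,t)\, d\mu(s) \Bigr) f_{\mu\boxtimes\nu}(t),
\]
where on the right I have matched the density of $\rho_f$ against the candidate overlap function. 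As this holds for all $f$, one reads off the first branch $\mu$-almost everywhere in $s$. The atoms $t>0$ of $\mu\boxtimes\nu$ are handled by the atomic part of the same inversion: the mass $\rho_f(\{t\})$ is recovered as $\lim_{\varepsilon\downarrow 0} i\varepsilon\, G_{\rho_f}(t+i\varepsilon)$, and comparing this with the multiplicative Bercovici--Voiculescu description of atoms of $\mu\boxtimes\nu$ (which pairs an atom $t$ with the atom $s$ of $\mu$ satisfying $\omega(1/t)=1/s$) produces the branch $\frac{1}{\mu(\{s\})}\,1_{\omega(1/t)=1/s}$. Throughout the a.c. analysis I would invoke the regularity theory for free multiplicative convolution (Belinschi \cite{Belinschi}) to guarantee that $G_{\mu\boxtimes\nu}$, hence $\omega$, extend continuously to $(0,\infty)$ off a closed Lebesgue-null set $E$, that $f_{\mu\boxtimes\nu}>0$ and $\omega(1/t)\in\bC^{+}$ there, and that $f_{\mu\boxtimes\nu}=-\tfrac1\pi\operatorname{Im}G_{\mu\boxtimes\nu}$; this is precisely what legitimizes the boundary-value manipulations above.

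The genuinely new difficulty, and the step I expect to be the main obstacle, is the behaviour at the endpoint $t=0$, which has no analogue in the additive setting. Here the spectral parameter $1/t$ runs off to infinity and $\omega$ must be analyzed near the corresponding boundary point; moreover $0$ can belong to $\Spec(a^{1/2}ba^{1/2})$ as an atom precisely because $a^{1/2}$ may have a nontrivial kernel, so $(\mu\boxtimes\nu)(\{0\})$ must be computed in terms of $\mu(\{0\})$ and $\nu(\{0\})$. I would treat $t=0$ by a dedicated limiting analysis: separating $s=0$ from $s\neq 0$, evaluating the limit of the partial transform $\int f(s)(1-\omega(1/w)s)^{-1}\, d\mu(s)$ as $w\downarrow 0$, and identifying $\lim_{t\downarrow 0}\omega(1/t)$ with the value $\psi_\mu^{-1}(\nu(\{0\})-1)$ of the inverse moment-generating transform; the dichotomies $\nu(\{0\})>\mu(\{0\})$ and $(\mu\boxtimes\nu)(\{0\})>0$ then emerge from the known criteria for $0$ to be an atom of $\mu\boxtimes\nu$.

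Once the four branches are in hand, the proof concludes by checking that $E$ is closed and null, that $\ovmulp\ge 0$, and that integrating $\ovmulp(s,t)\,d\mu(s)\,d(\mu\boxtimes\nu)(t)$ reproduces $\muovmulp$ against products $f(s)g(t)$, with $g$ ranging over a transform-separating family; this final verification simultaneously establishes $\muovmulp\ll\mu\times\mu\boxtimes\nu$ and identifies $\ovmulp$ as the overlap function of $a$ and $a^{1/2}ba^{1/2}$.
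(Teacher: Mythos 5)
Your plan follows essentially the same route as the paper's proof (Proposition \ref{prop:53} together with Theorem \ref{thm:54}): Biane's multiplicative kernel, Stieltjes inversion to get the branch on $\mathrm{supp}(\mu\boxtimes\nu)^{ac}\setminus E$, nontangential limits at the atoms $t>0$ (the atom--pairing result is Belinschi's \cite{BelinschiAtomsMult}, not Bercovici--Voiculescu, but that is cosmetic), the limit of $\omega$ at the boundary point corresponding to $t=0$ as in Lemma \ref{lem:52}, and a closing Fubini-type verification as in Theorem \ref{thm:45}. However, there is a genuine gap at the point where you claim the final check against a transform-separating family ``simultaneously establishes $\muovmulp\ll\mu\times\mu\boxtimes\nu$''. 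Stieltjes inversion recovers only the absolutely continuous density and the atom masses of your measures $\rho_f$ (equivalently, of the fiber measures $k_s$); it is blind to a possible singular continuous part, and to singular mass sitting on the closed Lebesgue-null set $E$, where you have no control on boundary values. Two finite measures can agree in a.c.\ density and in all atom masses and still differ by a singular continuous piece, so matching the computed branches against the candidate $\ovmulp$ does not identify $\rho_f$ with $\bigl(\int f(s)\,\ovmulp(s,\cdot)\,d\mu(s)\bigr)\,d(\mu\boxtimes\nu)$, and absolute continuity of $\muovmulp$ does not follow.

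The paper closes exactly this hole by two inputs that are missing from your list of regularity facts. First, $\mu\boxtimes\nu$ has no singular continuous part (Review \ref{rev:51}, item $3^o$, citing \cite{JiRegularityMult}), so that $\Gamma\cup U$ (atoms plus the good a.c.\ set) has full $\mu\boxtimes\nu$-measure. Second, the support-transfer consequence of disintegration, Equation (\ref{eqn:Z4b}) in Remark \ref{rem:Z4}: any Borel set of full $\mu\boxtimes\nu$-measure has full $k_s$-measure for $\mu$-almost every $s$. Once $k_s(\Gamma\cup U)=1$, Lemma \ref{lem:22}.$1^o$ and the finiteness of the boundary values of $G_{k_s}$ on the good set force the singular part of $k_s$ to be purely atomic and carried by the finite set $\Gamma$, and only then do the inversion formulas determine $k_s$ completely (this is the structure of the proof of Proposition \ref{prop:44}, which Proposition \ref{prop:53} imitates). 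Incidentally, your averaged-measure device admits an even quicker repair: since the second marginal of $\muovmulp$ is $\mu\boxtimes\nu$, one has $\rho_f\leq \Vert f\Vert_\infty\,(\mu\boxtimes\nu)$ for bounded $f\geq 0$, whence $\rho_f\ll\mu\boxtimes\nu$ outright, and Ji's theorem then gives the needed Lebesgue decomposition of $\rho_f$ --- but some such argument must be made explicit. A secondary point: at $t=0$ the correct limit is taken along the negative reals, $z\to-\infty$ (equivalently $w\to 0^-$), as in Lemma \ref{lem:52}, where dominated convergence applies; your ``$w\downarrow 0$'' approaches through the support of $\mu\boxtimes\nu$, where the boundary behaviour of $\omega$ is not controlled.
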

	
	Same as with the description of the overlap function, the general description 
	of conditional expectations $E(f(a)|a^{1/2}ba^{1/2})$ is more involved than in the additive case;
	we refer to Corollary \ref{cor:55} for the precise statement. In the case of $f(x)=x$ we can get a compact form similar to the one in the additive case
		\[
		h(t)=\lim_{\varepsilon\to 0}\frac{\operatorname{Im}\left(\frac{G_{\mu\boxtimes \nu}(t+i\varepsilon)-\frac{1}{t+i\varepsilon}}{\omega(\frac{1}{t+i\varepsilon})}\right)}{\operatorname{Im}\left(G_{\mu\boxtimes \nu}(t+i\varepsilon)\right)}.
		\]
		We make more comments about this formula in Subsection \ref{subsec:continuity}.
	
	We next present here the special 
	case when $b$ is a free Poisson element, which gives the free multiplicative analogue of Tweedie's formula.
	
	\vspace{10pt}
	
	\begin{example}   \label{example:15}
		{\em (Free multiplicative Tweedie's formula.)}
		Let $a,b$ be freely independent positive random variables with respective distributions $\mu, \nu$ in a $W^{*}$-probability space $( \cA , \varphi )$. Assume that $\nu$ is the free Poisson distribution with parameter $\lambda>0$ and, for simplicity, let us also assume
        that $\mu$ has no atom at zero. We then get 
		$E(a \mid a^{1/2}ba^{1/2}) = h(a^{1/2}ba^{1/2})$ with
		\begin{equation}  \label{eqn:16a}
			h(t)=
			\begin{cases}
				\frac{\lambda t}{|\lambda-1+tG_{\mu\boxtimes\nu}(t)|^2} & \mbox{ if } t\in\mathrm{supp}(\mu\boxtimes\nu)\setminus \{0\},\\
				-\frac{\lambda}{(1-\lambda)G_{(\mu\boxtimes\nu)^{ac}}\left(0\right)} &\mbox{ if } t=0 \mbox{ when } \lambda<1,
			\end{cases}
		\end{equation}
		where $G_{\mu\boxtimes\nu}$ and $G_{(\mu\boxtimes\nu)^{ac}}$ are the 
		respective Cauchy transforms of $\mu\boxtimes\nu$ and of its absolutely continuous part.
        As explained in Section \ref{Sec:MatrixDenoising} below, this 
        recovers a well-known formula of Ledoit and P\'ech\'e \cite{LedoitPeche} 
        for the shrinkage estimator of a covariance matrix.  Similarly to what was the case 
        for the free additive Tweedie's formula (\ref{eqn:freetweedieboxplus}), we note here 
        the remarkable fact that, in Equation (\ref{eqn:16a}), the description of $h$ only 
        makes use of the distribution of $a^{1/2}ba^{1/2}$. 
        %
        %
	\end{example}
    
	
	\subsection{General noise and conditional freeness.}
	In the general case when the noisy element is given by $P(a,b)$ our goal is, once again,
	to find $\bE(a \mid P(a,b))$. We show that this problem can be reduced
	to the problem of finding the distribution of $P(a,b)$ where $a,b$ are conditionally 
	free (or ``c-free'', for short) in the sense of 
	Bo\.zejko-Leinert-Speicher \cite{BozejkoLeinertSpeicher}. 
    
	\begin{theorem}   \label{thm:17}
	Let $a,b$ be freely independent selfadjoint non-commutative random variables 
        in a $W^{*}$-probability space $( \cA , \varphi )$. Let $f:\mathbb{R}\to \mathbb{R}$ 
	be a bounded non-negative Borel-measurable function such that $\varphi(f(a))=1$. Define 
        another state $\chi:\mathcal{A}\to \mathbb{C}$ by
		\[
		\chi(y):=\varphi(f(a)y),\qquad \forall y\in\mathcal{A}.
		\]
	Then $a,b$ are conditionally free with respect to the pair of states $(\varphi,\chi)$.
	Moreover, let $P$ be a selfadjoint polynomial in two non-commuting variables and
	denote by $\mu^{\varphi}_{P(a,b)}$ and $\mu^{\chi}_{P(a,b)}$ the distributions of 
	$P(a,b)$ with respect to $\varphi$ and $\chi$, respectively. Then 
	$\mu^{\chi}_{P(a,b)}\ll\mu^{\varphi}_{P(a,b)}$ and
		\[
		E(f(a)\mid P(a,b)) = h \bigl( \, P(a,b) \, \bigr) \mbox{ for }
		h = \frac{d\mu^{\chi}_{P(a,b)}}{d\mu^{\varphi}_{P(a,b)}}.
		\]
	\end{theorem}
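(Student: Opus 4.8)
The plan is to split the theorem into two independent assertions—c-freeness of $a,b$ with respect to $(\varphi,\chi)$, and the conditional-expectation formula—and prove each by reducing to the definitions.

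For the c-freeness claim, I would start from the Bożejko–Leinert–Speicher definition: $a,b$ are c-free with respect to $(\varphi,\chi)$ provided that, whenever $x_1,\ldots,x_n$ are alternating elements drawn from the two subalgebras with each $x_j$ centred in the sense $\varphi(x_j)=0$, one has $\chi(x_1\cdots x_n)=\chi(x_1)\cdots\chi(x_n)$. The key structural input is that $\chi$ agrees with $\varphi$ on the subalgebra generated by $b$: indeed $\chi(g(b))=\varphi(f(a)g(b))=\varphi(f(a))\varphi(g(b))=\varphi(g(b))$ by the freeness of $a$ and $b$ together with $\varphi(f(a))=1$. Thus $\chi$ and $\varphi$ coincide on the $b$-side while $\chi$ may differ from $\varphi$ on the $a$-side—exactly the asymmetric shape that c-freeness is designed to capture. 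First I would verify the alternating-centred condition by inserting the factor $f(a)$ into a free moment and using ordinary freeness of $a,b$ under $\varphi$: writing $\chi(x_1\cdots x_n)=\varphi\bigl(f(a)\,x_1\cdots x_n\bigr)$, I would centre $f(a)$ as $f(a)=\varphi(f(a))\cdot 1+\bigl(f(a)-1\bigr)=1+(f(a)-1)$ with $\varphi\bigl(f(a)-1\bigr)=0$, prepend the centred $a$-element, and then expand the now-alternating (up to merging adjacent same-algebra factors) word via the freeness moment-cumulant bookkeeping under $\varphi$. The combinatorics should telescope to the product $\chi(x_1)\cdots\chi(x_n)$; this is the part most likely to need care, since one must track how prepending $f(a)$ interacts with a word that may begin with either an $a$-element or a $b$-element.

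For the conditional-expectation formula, I would argue at the level of the overlap measures. Let $c:=P(a,b)$ and let $\cB$ be the von Neumann algebra it generates. The functional $y\mapsto\varphi(f(a)y)$ restricted to functions $k(c)$ of $c$ is, by the definition of the $\varphi$-preserving conditional expectation $E(\cdot\mid c)$, computed as $\varphi\bigl(f(a)k(c)\bigr)=\varphi\bigl(E(f(a)\mid c)\,k(c)\bigr)$. Writing $E(f(a)\mid c)=h_0(c)$ for some Borel $h_0$ on $\Spec(c)$, the left side equals $\int h_0(t)k(t)\,d\mu^{\varphi}_{c}(t)$. On the other hand, the very same quantity is $\chi(k(c))=\int k(t)\,d\mu^{\chi}_{c}(t)$ by the definition of $\mu^{\chi}_{c}$. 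Comparing these for all bounded Borel $k$ forces $d\mu^{\chi}_{c}=h_0\,d\mu^{\varphi}_{c}$, which simultaneously yields the absolute continuity $\mu^{\chi}_{P(a,b)}\ll\mu^{\varphi}_{P(a,b)}$ and identifies $h_0$ with the Radon–Nikodym derivative $d\mu^{\chi}_{P(a,b)}/d\mu^{\varphi}_{P(a,b)}$, exactly as claimed.

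The one point demanding genuine justification is the absolute continuity itself: a priori $\chi$ is only a state, so $\mu^{\chi}_{c}$ is only a probability measure on $\Spec(c)$, and I must rule out a singular component. The clean way is to observe that $\chi$ is dominated by $\varphi$ in a strong sense—$\chi(y^{*}y)=\varphi\bigl(f(a)\,y^{*}y\bigr)\le\|f(a)\|_{\infty}\,\varphi(y^{*}y)$ because $f\ge 0$ and $f(a)$ commutes past nothing problematic here (positivity of $f(a)$ gives the operator inequality $f(a)\le\|f(a)\|_{\infty}\cdot 1$, whence $f(a)\,y^{*}y\le\|f(a)\|_{\infty}y^{*}y$ after sandwiching, and applying $\varphi$). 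Specializing $y=k(c)$ gives $\int|k|^2\,d\mu^{\chi}_{c}\le\|f(a)\|_{\infty}\int|k|^2\,d\mu^{\varphi}_{c}$ for all bounded Borel $k$, and a standard measure-theoretic argument upgrades this $L^2$-domination to $\mu^{\chi}_{c}\ll\mu^{\varphi}_{c}$ with bounded density. I expect this domination step to be the main obstacle, mostly in handling the operator inequality cleanly, while the identification of the density via the defining property of $E(\cdot\mid c)$ is then immediate.
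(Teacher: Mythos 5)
Your proposal is correct and follows essentially the same route as the paper: c-freeness is proved by inserting $f(a)$, centring it as $1+(f(a)-1)$, and invoking ordinary freeness under $\varphi$ (the paper's Proposition 6.4, with the same case split according to whether the word begins with an $a$-element or a $b$-element); the identification of $h$ with $d\mu^{\chi}_{P(a,b)}/d\mu^{\varphi}_{P(a,b)}$ comes from the defining property of $E(\cdot\mid P(a,b))$ exactly as in the paper's Remark 7.1; and your domination bound $\chi(y^{*}y)\le\Vert f\Vert_{\infty}\varphi(y^{*}y)$ plays the role of the paper's Cauchy--Schwarz argument (Lemma 7.7), though it is in fact redundant, since the identity $\int k\,d\mu^{\chi}_{c}=\int k\,h_0\,d\mu^{\varphi}_{c}$ for all bounded Borel $k$ already forces $\mu^{\chi}_{c}=h_0\,\mu^{\varphi}_{c}$ with no room for a singular part. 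The one step you flag as uncertain --- the ``telescoping'' in the c-freeness check --- actually collapses: since the word alternates and $n\ge 2$, some $x_k$ lies in the algebra of $b$, whence $\chi(x_k)=\varphi(f(a))\varphi(x_k)=0$ and the right-hand side vanishes, while the left-hand side vanishes after your centring step by one or two applications of freeness, which is precisely how the paper concludes.
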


        In the setting of Theorem \ref{thm:17}, the elements $a,b$ are freely 
        independent with respect to $\varphi$, hence $\mu^{\varphi}_{P(a,b)}$ 
        can be approached by the methods developed in \cite{BelinschiMaiSpeicher}
        for finding distributions of general polynomials and rational functions 
        in free random variables. Theorem \ref{thm:17} brings up the question of 
        understanding, in the $c$-free setting, what is the distribution 
        $\mu^{\chi}_{P(a,b)}$. We mention that the latter question is currently 
        being investigated in an ongoing work that comes in the continuation of 
        the paper \cite{LehnerSzpojankowski}; this is related to the fact 
        that \cite{LehnerSzpojankowski} presents a solution to  the complementary 
        problem of computing conditional expectations of resolvents 
        $E((z-P(a,b))^{-1}|a)$ (which offers another method for approaching the 
        distribution $\mu^{\varphi}_{P(a,b)}$).
        %
        %
        

\subsection{Relation to matrix denoising.}  
The problem of free denoising studied 
    in the present paper is a counterpart, set in the framework of a $W^{*}$-probability
    space, for the statistical problem studied in random matrix theory under the name 
    of {\em matrix denoising}.  In the latter problem, an unknown signal represented by a 
    Hermitian $N \times N$ random matrix $A_N$ is corrupted by a noise which is itself a 
    Hermitian $N \times N$ random matrix $B_N$, and one would like to ``recover the signal'' 
    $A_N$ from a noisy observation of the form $P(A_N,B_N)$ -- that is, find an estimate of 
    $A_N$ as a function of $P(A_N,B_N)$.  Free denoising and matrix denoising are related,
    due to a well-studied phenomenon of asymptotic free independence for random matrices 
    (see e.g.~Chapter 4 of the monograph \cite{MingoSpeicher}).  
    This explains, in particular, why the free denoisers giving analogues for Tweedie's 
    formula (Equations \eqref{eqn:freetweedieboxplus} and \eqref{eqn:16a}) are in 
    agreement with formulas that appeared previously in the random matrix literature 
    \cite{BunAllezBouchaudPotters, LedoitPeche}, in papers about matrix denoising.  
    The methods used in these papers are different in scope from 
    those that come up naturally in the $W^{*}$-probability setting (but see, however, the 
    brief discussion about this made in Section 19.4 of the monograph \cite{PottersBouchaud}).  

    Section \ref{Sec:MatrixDenoising} of the present paper gives a more detailed account 
    of the connection between free denoising and matrix denoising, starting with the general
    observation that the notion of overlap measure (which also makes sense in a Borel-ncps
    of $N \times N$ random matrices) is well-behaved under the natural notion of convergence 
    in moments for pairs of selfadjoint elements.  In a nutshell, the point made in 
    Section \ref{Sec:MatrixDenoising} is as follows: suppose that a sequence of pairs of 
    Hermitian random matrices $(A_N, B_N)_{N=1}^{\infty}$ converges in moments to a pair 
    $(a,b)$ of freely independent selfadjoint elements in a $W^{*}$-probability space; then 
    the free denoiser $h$ for $a$ with respect to $P(a,b)$ provides an asymptotically optimal 
    solution, in $L^2$-sense, to the corresponding 
    matrix denoising problem of $A_N$ with respect to $P(A_N, B_N)$.  The term 
    ``asymptotically optimal'' is considered here with the meaning that no bounded
    continuous function can asymptotically outperform $h$ in the $L^2$-sense; for the precise 
    statements, see Corollary \ref{cor:93} in 
    Subsection \ref{Sec:MatrixDenoising}.1, and the specific discussion of additive 
    and multiplicative denoising in Subsections \ref{Sec:MatrixDenoising}.2 and 
    \ref{Sec:MatrixDenoising}.3.

\subsection{Organization of the paper.}
Besides the Introduction, this paper has eight more sections and an Appendix,
as follows.

\vspace{4pt}

\noindent
-- We introduce and study the overlap measure 
and overlap function in Section \ref{section:Y}, in the setting of a Borel-ncps,
after having discussed the Borel-ncps framework in Section \ref{section:X}.

\vspace{4pt}

\noindent
-- Starting with Section \ref{section:Z}, we focus on the setting of a 
$W^{*}$-probability space; in particular, Section \ref{section:Z} discusses 
how, in general, the disintegration of the overlap measure is used to compute 
conditional expectations $E(x \, | \, y)$ in the $W^{*}$-framework.  

\vspace{4pt}

\noindent
-- After a review of 
necessary background in Section \ref{SubordReview}, we take on additive free denoisers 
$E( a \, | \, a+b )$ in Section \ref{AdditiveFormulas}, where we detail the results 
announced in the above Subsection 1.5 of the Introduction.  Likewise, in 
Section \ref{MultiplicativeFormulas} we detail the results about multiplicative free 
denoisers $E( a \, | \, a^{1/2} b a^{1/2} )$ that were announced in Subsection 1.6. 

\vspace{4pt}

\noindent
-- In Section \ref{CFreeFormulas} we consider the case of a general noise 
(beyond the additive and multiplicative cases), and we explain how free denoising 
relates to conditional freeness.  

\vspace{4pt}

\noindent
-- Section \ref{Sec:MatrixDenoising}
is devoted to discussing the relation between free denoising and matrix denoising. 

\vspace{4pt}

\noindent
-- The paper concludes with an appendix which outlines how the overlap measure 
approach can be adapted, in the multiplicative case, in connection to the 
multiplication of free unitaries.

\vspace{0.5cm}

\section{The framework of a Borel-ncps}   \label{section:X}

\begin{notation-and-remark}  \label{def:X1}
The underlying structure we start from is the one of a {\em $*$-probability space} 
$( \cA, \varphi )$, with $\varphi$ a faithful trace.  Thus $\cA$ is a unital 
$*$-algebra over $\bC$ and $\varphi : \cA \to \bC$ is a linear map which is positive 
and faithful ($\varphi (a^{*}a ) \geq 0$ for all $a \in \cA$, with equality if and 
only if $a = \zeroA$), is tracial ($\varphi (ab) = \varphi (ba)$ for all 
$a,b \in \cA$), and has $\varphi ( \oneA ) = 1$.

For $( \cA , \varphi )$ as above, one considers the set of selfadjoint elements
$\cA^{\mathrm{sa}} := \{ x \in \cA \mid x = x^{*} \}$, and the set of positive 
elements
\[
\cA^{+} := \Bigl\{ x \in \cA \begin{array}{ll} 
\vline & \exists \, k \in \bN \mbox{ and } a_1, \ldots , a_k \in \cA \\
\vline & \mbox{such that $x = a_1^{*} a_1 + \cdots + a_k^{*} a_k$}
\end{array}  \Bigr\}.
\]
It is immediate that $\cA^{+}$ is a cone, i.e.~it is stable under linear 
combinations with coefficients in $[0, \infty )$, and that $\cA^{\mathrm{sa}}$ 
is the vector space over $\bR$ which is generated by $\cA^{+}$ (in the verification
of latter fact it is useful to observe that an $x \in \cA^{\mathrm{sa}}$
can be written as $x = \frac{1}{4} (x+ \oneA )^2 - \frac{1}{4} (x- \oneA)^2$, 
with $(x \pm \oneA)^2 \in \cA^{+}$).  The faithfulness of $\varphi$ entails the 
implication 
$\bigl[ x \mbox{ and $-x$ both in $\cA^{+}$} \bigr] \Rightarrow [x= \zeroA]$, 
and this in turn implies that the cone $\cA^{+}$ gives a partial 
order on $\cA^{\mathrm{sa}}$, where for $x,y \in \cA^{\mathrm{sa}}$ we put 
$\bigl[ \, x \leq y \, \bigr] \ \eqdef \,  
\bigl[ \, y-x \in \cA^{+} \, \bigr]$.
It is useful to record that, besides its obvious properties related to 
addition and scalar multiplication, the partial order so obtained has the 
property that:
\begin{equation}  \label{eqn:X1a}  
\bigl[ \, x \leq y \mbox{ in } \cA^{\mathrm{sa}} \, ] \ \Rightarrow
\ \bigl[ \, a^{*} x a \leq a^{*} y a \mbox{ in } \cA^{\mathrm{sa}}, 
\ \forall \, a \in \cA \, ].
\end{equation}
In order to verify the latter inequality, one picks
$a_1, \ldots , a_k \in \cA$ such that
$y - x = \sum_{j=1}^k a_j^{*} a_j$ and observes that 
$a^{*} y a - a^{*} x a = \sum_{j=1}^k (a_j a)^{*} (a_j a) \in \cA^{+}$.

For the purpose of this paper, it is important to be able to invoke 
a suitable ``functional calculus'' for an element $x \in \cA^{\mathrm{sa}}$.
It is evident how that can be done in connection to a polynomial function: if
$f : \bR \to \bC$ is defined by 
$f(t) = \alpha_0 + \alpha_1 t + \cdots + \alpha_k t^k$
(with $\alpha_0, \ldots , \alpha_k \in \bC$) then for
$x \in \cA^{\mathrm{sa}}$ we simply put 
$f(x) := \alpha_0 \oneA + \alpha_1 x + \cdots + \alpha_k x^k \in \cA$. 
But we will need to consider elements $f(x) \in \cA$ which are defined
for $f : \bR \to \bC$ belonging to a larger space of functions, 
as described in Definition \ref{def:X3} below.
\end{notation-and-remark}

\vspace{10pt}

\begin{notation}  \label{def:X2}
We let $\Bor ( \bR )$ denote the unital $*$-algebra of Borel functions 
from $\bR$ to $\bC$.  We will work with the unital $*$-subalgebra
$\Bor_b ( \bR ) := \bigl\{ f \in \Bor ( \bR ) \mid f 
\mbox{ is bounded} \bigr\},$ and with the slightly larger unital
$*$-subalgebra of Borel functions with polynomial growth,
\[
\Borpol ( \bR ) = \Bigl\{ f \in \Bor ( \bR )
\begin{array}{ll}
\vline &  \exists \, \alpha, \beta  \in [ 0, \infty ) 
          \mbox{ and $p \in \bN$ such that} \\ 
\vline & \mbox{ $|f(t)| \leq \alpha + \beta \, t^{2p}$
               for all $t \in \bR$}
 \end{array} \Bigr\} .
\]
\end{notation}

\vspace{10pt}

\begin{definition}  \label{def:X3}
We will use the name {\em Borel-ncps} 
for a triple $( \cA , \varphi ; \Gamma )$ where: 
$( \cA , \varphi )$ is a $*$-probability space with $\varphi$ faithful trace, 
and $\Gamma$ is a family $\bigl( \Gamma_x )_{ x \in \cA^{\mathrm{sa}} }$ 
of unital $*$-homomorphisms from $\Borpol ( \bR )$ to $\cA$, 
such that the following conditions are fulfillled:
\begin{equation}   \label{eqn:X3a}
\left\{  \begin{array}{l}
\mbox{Let $\mathrm{id} \in \Borpol ( \bR )$ be the identity function, 
      $\mathrm{id} (t) = t$.}  \\
\mbox{Then one has: $\Gamma_x ( \mathrm{id} ) = x, \ \forall \, x \in \cA^{\mathrm{sa}}$.}
\end{array}  \right.
\end{equation}

\begin{equation}   \label{eqn:X3b}
\left\{  \begin{array}{l}
\mbox{Let $f,g$ and $f_1, f_2, \ldots , f_n, \ldots$ in $\Borpol ( \bR )$ be}  \\
\mbox{ $\ $ such that $|f_n| \leq g$ for all $n \in \bN$, and}  \\
\mbox{ $\ $ such that $\lim_{n \to \infty} f_n (t) = f(t)$ for every $t \in \bR$.} \\
\mbox{Then one has: } \lim_{n \to \infty} 
      \varphi \bigl( \, x \, \Gamma_y ( f_n ) \, \bigr) 
    = \varphi \bigl( \, x \, \Gamma_y (f) \, \bigr), \ \forall \, x,y \in \cA^{\mathrm{sa}}.
\end{array}  \right.
\end{equation}
\end{definition}

\vspace{10pt}

\begin{remark}   \label{rem:X4}
The way to think about $\Gamma$ in the preceding definition is that,
for $x \in \cA^{\mathrm{sa}}$, the unital $*$-homomorphism 
$\Gamma_x : \Borpol ( \bR ) \to \cA$ tells us how to do functional calculus 
of $x$, with functions from $\Borpol ( \bR )$.  We will in fact use the 
shorthand notation
\begin{equation}   \label{eqn:X4a}
\Gamma_x (f) =: f(x), 
\ \mbox{for $x \in \cA^{\mathrm{sa}}$ and $f \in \Borpol ( \bR )$. }
\end{equation}
With this notation, the condition (\ref{eqn:X3a}) takes the form 
``$\mathrm{id} (x) = x$''; together with the homomorphism properties 
postulated for $\Gamma_x$, this immediately implies that (\ref{eqn:X4a})
extends the formula considered for $f(x)$ in the last paragraph of 
Remark \ref{def:X1}, in the case when $f$ is a polynomial function.
Another consequence of the $*$-homomorphism properties of $\Gamma_x$
is that $f(x) \in \cA^{\mathrm{sa}}$ whenever $f \in \Borpol ( \bR)$ is 
real-valued, and one has that
\begin{equation}   \label{eqn:X4b}
\left[ \begin{array}{c}
f,g \in \Borpol ( \bR ), \mbox{ real-valued,}  \\
\mbox{with $f(t) \leq g(t)$ for every $t \in \bR$}
\end{array} \right] \ \Rightarrow \bigl[ \, f(x) \leq g(x) \, \bigr] 
\end{equation}
(for the latter fact, one writes $g(x) - f(x) = a^{*} a$ with 
$a = \sqrt{g-f} (x)$).

Let us also record that, in terms of the notation from 
(\ref{eqn:X4a}), the convergence stated on the last line of (\ref{eqn:X3b}) 
says that, for functions $f$ and $f_n$ as described there, one has
\begin{equation}   \label{eqn:X4c}
\lim_{n \to \infty} \varphi \bigl( \, x \, f_n (y) \, \bigr) 
= \varphi \bigl( \, x \, f(y) \, \bigr), \ \ \forall \, x,y \in \cA^{\mathrm{sa}}.
\end{equation}
%
%
\end{remark}

\vspace{10pt}

\begin{remark-and-notation}   \label{rem:X5}
Let $( \cA , \varphi ; \Gamma )$ be a Borel-ncps and let 
$x \in \cA^{\mathrm{sa}}$. The functional calculus for $x$ allows us 
to consider the family of projections
$P(t) := \mathbf{1}_{]-\infty;t]}(x) \in \cA$, $t \in \bR$, and
to define a non-decreasing function $F : \bR \to [0,1]$ by
$F(t) := \varphi ( \, P(t) \, ), \ t \in \bR.$  By using
the condition (\ref{eqn:X3b}) from Definition \ref{def:X3} one sees 
that $F$ is continuous on the right, with $\lim_{t \to - \infty} F(t) = 0$ 
and $\lim_{t \to \infty} F(t) = 1$.  This implies the existence of a 
Borel probability measure $\mu_x$ on $\bR$, uniquely determined, such that 
$F$ is the cumulative distribution function of $\mu_x$.  This $\mu_x$ will 
be referred to as the {\em distribution of $x$} with respect to $\varphi$.
Standard integration arguments show that $\mu_x$ has finite moments of all 
orders, where for every $n \in \bN$ the moment of order $n$ of $\mu_x$ is 
equal to $\varphi (x^n)$.  More generally, the functions from
$\Borpol ( \bR )$ are $\mu_x$-integrable, and one has
\begin{equation}  \label{eqn:X5a}
\int_{\bR} f \, d \mu_x = \varphi \bigl( \, f(x) \, \bigr),
\ \ \forall \, f \in \Borpol ( \bR ).
\end{equation}
\end{remark-and-notation}

\vspace{10pt}

\begin{remark-and-notation}   \label{rem:X6}
Let $( \cA , \varphi, \Gamma )$ be a Borel-ncps.  We will use the notation 
``$\Spec (a)$'' for the spectrum of an $a \in \cA$,
$\Spec (a) := \bigl\{ \lambda \in \bC \mid \lambda \oneA - a
\mbox{ is not invertible} \bigr\},$
and we note the implication
\begin{equation}   \label{eqn:X6b}
\bigl[ \, x \in \cA^{\mathrm{sa}} \, \bigr] \ \Rightarrow
\bigl[ \, \Spec (x) \subseteq \bR \, \bigr].
\end{equation}
In other words, (\ref{eqn:X6b}) says that $\lambda \oneA - x$ is 
invertible for every $x \in \cA^{\mathrm{sa}}$ and
$\lambda \in \bC \setminus \bR$.  This holds because 
we can let $f_{\lambda} \in \Borpol ( \bR )$ be defined 
by $f_{\lambda} (t) = 1/ ( \lambda - t )$, $t \in \bR$, and we can use 
the functional calculus of $x$ to get
$f_{\lambda} (x) \cdot ( \lambda \oneA - x )$
$ = \oneA = ( \lambda \oneA - x ) \cdot f_{\lambda} (x),$
implying the required invertibility of $\lambda \oneA - x $.
\end{remark-and-notation}

\vspace{10pt}

%

\begin{example}  \label{example:X7}
Let $( \cA , \varphi )$ be a tracial $W^{*}$-probability space, that is, a 
$*$-probability space where $\cA$ is a von Neumann algebra and $\varphi$ is
a normal faithful trace-state.  This provides an example of
Borel-ncps, where for every $x \in \cA^{\mathrm{sa}}$ the $*$-homomorphism 
$\Gamma_x : \Borpol ( \bR ) \to \cA$ is the customary functional 
calculus for selfadjoint elements (see e.g.~\cite[Sections I.4 and III.1]{Takesaki}).
A special feature of this framework is that, for every 
$x \in \cA^{\mathrm{sa}}$, the spectrum $\Spec (x)$ is a non-empty compact
subset of $\bR$, precisely equal to the support of the distribution 
$\mu_x$ (see e.g.~\cite[Lecture 3, Proposition 3.15]{NicaSpeicher}).  For any 
$f \in \Borpol ( \bR )$ and $x \in \cA^{\mathrm{sa}}$, the element $f(x)$ only 
depends on the restriction of $f$ to $\Spec (x)$, which is sure to be bounded 
(which explains why $f(x)$ comes out as a bounded linear operator, belonging 
to $\cA$). 
\end{example}

\vspace{10pt}

In preparation of our second main example of Borel-ncps structure, we review 
the elementary notion of functional calculus for a complex Hermitian matrix.

\vspace{6pt}

\begin{notation-and-remark}   \label{def:X8}
Pick $N \in \bN$ and let 
$\cM_N^{\mathrm{sa}} ( \bC ) = \{ X \in \cM_N ( \bC ) \mid X = X^{*} \}$.  

\vspace{6pt}

\noindent
$1^o$ Every function $f : \bR \to \bC$ induces a function
\begin{equation}   \label{eqn:X8a}
\Phi_f : \cM_N^{\mathrm{sa}} ( \bC ) \to \cM_N ( \bC ), 
\ \ \Phi_f (X) := f(X),
\end{equation}
with ``$f(X)$'' defined in the natural way,
$f(X) := \alpha_0 I_N + \alpha_1 X + \cdots + \alpha_k X^k$ where
$p(t) = \alpha_0 + \alpha_1 t + \cdots + \alpha_k t^k$ is any polynomial in
$\bC [t]$ which agrees with $f$ on the set of eigenvalues of $X$.
Equivalently, one can write $f(X) = U \, f(D) \, U^{*}$, where $X = UDU^{*}$ 
is any orthogonal diagonalization of $X$, and $f(D)$ is obtained by applying
$f$ to the diagonal entries of $D$. 

We note the following properties of the function $\Phi_f$ from (\ref{eqn:X8a}):

(i) If $f : \bR \to \bC$ is continuous, then $\Phi_f$ is continuous 
in the norm topology.

(ii) If $f : \bR \to \bC$ is a Borel function, then so is $\Phi_f$ (where
$\cM_N^{\mathrm{sa}} ( \bC )$ and $\cM_N ( \bC )$ are 

\hspace{0.4cm} considered with their 
natural Borel sigma-algebras).

\noindent
The verification of (i) is made by uniformly approximating $f$ with 
polynomials on compact intervals of $\bR$.  Then (ii) follows from
(i) and the observation that the algebra of functions
$\bigl\{ f \in \Bor ( \bR ) \mid \mbox{$\Phi_f$ is Borel} \bigr\}$
is closed under sequential pointwise convergence.

\vspace{6pt}

\noindent
$2^o$ Suppose now that we are also given a probability space
$( \Omega , \cF , P )$, and let us consider the unital $*$-algebra
of matrix-valued random variables
\begin{equation}   \label{eqn:X8b}
L^0 \bigl( \Omega, \cM_N ( \bC ) \bigr) :=
\Bigl\{ a : \Omega \to \cM_N ( \bC ) \begin{array}{ll}
\vline & a \mbox{ is measurable between $\cF$ and}  \\
\vline & \mbox{the Borel sigma-algebra of $\cM_N ( \bC )$}
\end{array}  \Bigr\}
\end{equation}
(with random variables $a,b$ identified when they coincide $P$-almost
everywhere).  For every 
$x = x^{*} \in L^0 \bigl( \Omega, \cM_N ( \bC ) \bigr)$ and 
$f \in \Bor ( \bR )$ we then define
$f(x) : \Omega \to \cM_N ( \bC )$ by
\begin{equation}   \label{eqn:X8c}
\bigl( \, f(x) \, \bigr) ( \omega ) := f ( \, x(\omega) \, ) 
\in \cM_N ( \bC ), \ \ \omega \in \Omega.
\end{equation}
The function $f(x)$ defined in (\ref{eqn:X8c}) is measurable, hence
is in $L^0 \bigl( \Omega, \cM_N ( \bC ) \bigr)$, since
$f(x) = \Phi_f \circ x$ with $\Phi_f$ as in (\ref{eqn:X8a}).
It is immediate that, for a fixed  
$x = x^{*} \in L^0 \bigl( \Omega, \cM_N ( \bC ) \bigr)$,
the functional calculus defined by (\ref{eqn:X8c}) respects the algebraic 
structure of $\Bor ( \bR )$, e.g.~one has that $(f \, g ) (x) = f(x) \, g(x)$ 
for any $f,g \in \Bor ( \bR )$, or that 
$f(x) \in \bigl[ \, L^0 \bigl( \Omega, \cM_N ( \bC ) \bigr) \, \bigr]^{+}$
for any real non-negative valued $f \in \Bor ( \bR )$.
\end{notation-and-remark}

\vspace{10pt}

\begin{example}  \label{example:X9}
We continue to use the framework of the preceding notation, and for every 
$a \in L^0 \bigl( \Omega, \cM_N ( \bC ) \bigr)$ and $1 \leq j,k \leq N$
we let $\Entry_{j,k} (a) : \Omega \to \bC$ be the random variable which 
selects the $(j,k)$-entry of $a$.  For
$a \in L^0 \bigl( \Omega, \cM_N ( \bC ) \bigr)$ we will also use the 
customary notation $\mathrm{Tr} (a) := \sum_{j=1}^N \Entry_{j,j} (a)$.

The unital $*$-algebra used by this example of Borel-ncps is
\begin{equation}   \label{eqn:X9a}
\cA := \bigl\{ a \in L^0 \bigl( \Omega, \cM_N ( \bC ) \bigr) \mid
\Entry_{j,k} (a) \in L^{\infty -} ( \Omega ) \mbox{ for all }
1 \leq j,k \leq N \bigr\}, 
\end{equation}
where $L^{\infty -} ( \Omega )$ is the algebra of complex-valued 
random variables with finite moments of all orders on
$( \Omega , \mathcal{F} , P )$.  We note that, thanks to the identity
\[
\mathrm{Tr} (a^{*} a ) = \sum_{j,k=1}^N | \Entry_{j,k} (a) |^2, 
\ a \in L^0 \bigl( \Omega, \cM_N ( \bC ) \bigr),
\]
one can also describe our algebra $\cA$ from (\ref{eqn:X9a}) 
in the form
\begin{equation}   \label{eqn:X9b}
\cA = \bigl\{ a \in L^0 \bigl( \Omega, \cM_N ( \bC ) \bigr) \mid
\mathrm{Tr} (a^{*} a ) \in L^{\infty -} ( \Omega ) \bigr\}.
\end{equation}
The functional $\varphi : \cA \to \bC$ that we consider is defined by
$\varphi (a) := \frac{1}{N} \, \mathbb{E} ( \, \mathrm{Tr} (a) \, ), 
\ a \in \cA.$
It is straightforward to check that in this way
we get a $*$-probability space $( \cA , \varphi )$ where $\varphi$ is a 
faithful trace.  

We next observe that the selfadjoint elements of $\cA$ have 
a natural notion of functional calculus, coming from the following fact.

\vspace{6pt}

\noindent
{\em Fact 1.} Let $x \in \cA^{\mathrm{sa}}$ and $f \in \Borpol ( \bR )$,
and let 
$f(x) \in L^0 \bigl( \Omega, \cM_N ( \bC ) \bigr)$
be defined by (\ref{eqn:X8c}).  

\hspace{0.75cm} Then $f(x) \in \cA$.

\noindent
{\em Verification of Fact 1.} Since $( \, f(x) \, )^{*} \, f(x) = |f|^2 (x)$,
what we must check here (according to (\ref{eqn:X9b})) is that the non-negative 
random variable $\mathrm{Tr} \bigl( \, |f|^2 (x) \, \bigr)$ belongs to 
$L^{\infty -} ( \Omega )$.
To that end we pick $\alpha, \beta \in [0, \infty )$ and $p \in \bN$ such that 
$|f|^2 (t) \leq \alpha + \beta \, t^{2p}$ for all $t \in \bR$, and we use the 
properties of functional calculus on $L^0 \bigl( \Omega, \cM_N ( \bC ) \bigr)$
to find that
\begin{equation}   \label{eqn:X9c}
\mathrm{Tr} \bigl( \, |f|^2 (x) \, \bigr) \leq
\mathrm{Tr} \bigl( \, \alpha I_N + \beta x^{2p} \, \bigr)
= N \alpha + \beta \mathrm{Tr} ( \, x^{2p} \, ).
\end{equation}
The random variable $\mathrm{Tr} ( \, x^{2p} \, )$ is in 
$L^{\infty -} ( \Omega )$, because it can be written as an algebraic 
expression in the entries 
$\Entry_{j,k} (x) \in L^{\infty -} ( \Omega )$.  From the inequality (\ref{eqn:X9c})
it then follows that $\mathrm{Tr} \bigl( \, |f|^2 (x) \, \bigr)$ is in 
$L^{\infty -} ( \Omega )$ as well.
\hfill  $\square$

\vspace{6pt}

It is immediate that, for every $x \in \cA^{\mathrm{sa}}$, the assignment
$f \mapsto f(x)$ defines a unital $*$-homomorphism
$\Gamma_x : \Borpol ( \bR ) \to \cA$ such that $\Gamma_x ( \mathrm{id} ) = x$.
In order to confirm that we are dealing with a Borel-ncps structure, we are 
thus left to check a dominated convergence condition:

\vspace{6pt}

\noindent
{\em Fact 2.} The condition (\ref{eqn:X3b}) from Definition \ref{def:X3} is satisfied.

\noindent
{\em Verification of Fact 2.} Let $( f_n )_{n=1}^{\infty}, f,g \in \Borpol ( \bR )$ 
and $x,y \in \cA$ be as in (\ref{eqn:X3b}). We have
\begin{equation}  \label{eqn:X9d}
\varphi \bigl( \, x \, f_n (y) \, \bigr)
= \frac{1}{N} \int_{\Omega} \mathrm{Tr} 
\bigl( \, x( \omega ) \, f_n ( y( \omega ) ) \, \bigr) \, dP( \omega ),
\ n \in \bN,
\end{equation}
and a similar formula holds for $\varphi \bigl( \, x \, f(y) \, \bigr)$.
The pointwise convergence of the $f_n$'s to $f$ implies that
for every $\omega \in \Omega$ we have 
$|| \cdot ||-\lim_{n \to \infty} f_n ( y( \omega ) )
= f( y( \omega ) )$ (norm-convergence in $\cM_N ( \bC )$),
and this immediately entails that
$\lim_{n \to \infty} \mathrm{Tr} 
\bigl( \, x( \omega ) \, f_n ( y( \omega ) ) \, \bigr)
= \mathrm{Tr} 
\bigl( \, x( \omega ) \, f( y( \omega ) ) \, \bigr).$
Thus the integrand on the right-hand side of (\ref{eqn:X9d})
converges pointwise, for $n \to \infty$, to its counterpart having
$f$ instead of $f_n$ in its description.

In order to complete the verification of Fact 2, it will thus be sufficient 
to find a dominating function (independent of $n$) for the integrand on the 
right-hand side of (\ref{eqn:X9d}) -- once this is done, the Lebesgue dominated 
convergence theorem will give us the desired convergence of 
$\varphi \bigl( \, x \, f_n (y)\, \bigr)$ to
$\varphi \bigl( \, x \, f(y) \, \bigr)$.

Towards finding a dominating function, we pick some 
$\alpha, \beta \in [ 0, \infty )$ and $p \in \bN$ such that 
$g^2 (t) \leq \alpha + \beta \, t^{2p}$ for all $t \in \bR$; this implies 
that $|f_n|^2 (t) \leq \alpha + \beta \, t^{2p}$ for all $n \in \bN$ 
and $t \in \bR,$ with the further consequence that
for every $n \in \bN$ and $\omega \in \Omega$ we have
\begin{equation}   \label{eqn:X9e}
\mathrm{Tr} 
\bigl( \, |f_n|^2 ( y( \omega ) ) \, \bigr)
\leq \alpha \, N + \beta \, \mathrm{Tr} \bigl( \,  y^{2p} ( \omega ) \, \bigr)
=: v( \omega ).
\end{equation}
Returning to the integrand on the right-hand side of (\ref{eqn:X9d}),
we then invoke the Cauchy-Schwarz inequality to infer that
\begin{equation}   \label{eqn:X9f}
\vline \ \mathrm{Tr} 
\bigl( \, x( \omega ) \, f_n ( y( \omega ) ) \, \bigr) \ \vline
\leq \sqrt{ u( \omega ) \, v ( \omega ) }, \ \ \forall \, n \in \bN
\mbox{ and } \omega \in \Omega,
\end{equation}
where $v( \omega )$ is defined in (\ref{eqn:X9e}) and we put 
$u( \omega ) := \mathrm{Tr} \bigl( \,  x^2 ( \omega ) \, \bigr)$.
We know (from how the algebra $\cA$ was defined) that 
$\mathrm{Tr} (x^2), \mathrm{Tr} (y^{2p}) \in L^{\infty -} ( \Omega )$, 
and this immediately implies that 
$\sqrt{u \, v }$ is in $L^{\infty -} ( \Omega )$ as well.  Thus 
$\sqrt{u \, v }$ is an integrable non-negative random variable which 
(in view of (\ref{eqn:X9f})) can be used as our dominating function.
\hfill $\square$

\end{example} 

\vspace{0.5cm}

\section{Overlap measure and overlap function}   \label{section:Y}

In this section we review the construction and some basic properties of a 
probability measure $\muovxy$ on $\bR^2$, which is associated to a pair of 
selfadjoint elements $x,y$ in a Borel-ncps $( \cA , \varphi ; \Gamma )$.  
In the special case when our Borel-ncps is a $W^{*}$-probability space, this 
construction has been known for a long time, and appears for instance in 
Section I.1 of \cite{Connes}.
%
%
For the reader's reassurance that the construction extends to the Borel-ncps 
setting, we provide its details in the proof of Proposition \ref{prop:Y1} below.

The terms ``overlap measure'' and ``overlap function'' used
in the present paper are inspired from the case when $\cA = \cM_N ( \bC )$ and 
$\varphi = \frac{1}{N} \mathrm{Tr}$.  
In that case, the description of $\muovxy$ boils down to looking at inner products 
between the eigenvectors of $x$ and those of $y$ (cf.~Example \ref{example:Y5} below), 
and such inner products are known as ``overlaps'' in the physics and in the random matrix 
literature.  

\vspace{6pt}

\begin{proposition-and-definition}  \label{prop:Y1}
Let $(\mathcal{A},\varphi ; \Gamma )$ be a Borel-ncps,
and let $x,y \in \cA^{\mathrm{sa}}$.
There exists a probability measure $\mu$ on the Borel sigma-algebra
of $\bR^2$, uniquely determined, such that		
\begin{equation}  \label{eqn:Y1a}
\int_{\bR^2}f(s)g(t) d \mu (s,t) =  \varphi(f(x)g(y)),
\ \ \forall \, f,g \in \Bor_b ( \bR ),
\end{equation}
where the elements $f(x), g(y) \in \cA$ that appear in (\ref{eqn:Y1a}) are 
obtained by performing Borel functional calculus on $x$ and on $y$, respectively.
		
\vspace{6pt}
		
\noindent
We will refer to this probability measure $\mu$ as the \emph{overlap measure} 
of $x$ and $y$, and we will denote it as $\muovxy$.
\end{proposition-and-definition}

\begin{proof}
{\em Construction of $\mu$.}  We consider the families of projections
\[
P(s) := \mathbf{1}_{]-\infty;s]}(x),  \ Q(t) := \mathbf{1}_{]-\infty;t]}(y),
\ \ s,t \in \bR,
\]
and we define $F : \mathbb{R}^2 \to \bR$ by
$F(s,t) = \varphi ( \, P(s)Q(t) \, )$, for $s,t \in \bR$.
We will verify that $F$ has the properties required from a cumulative distribution 
function on $\mathbb{R}^2$.

We start by observing that, due to the trace property of $\varphi$, we can write 
\begin{equation}   \label{eqn:Y1b}
			F(s,t) = \varphi \bigl( \, P(s) \, Q(t) \, P(s) \, \bigr),
\ \ s,t \in \bR,
\end{equation}
with $P(s) \, Q(t) \, P(s) \in \cA^{+}$.  For any fixed $s_o \in \bR$, we 
can invoke (\ref{eqn:X4b}) and then (\ref{eqn:X1a}) in order to find that
\[
\zeroA \leq P(s_o) \, Q(t_1) \, P(s_o) \leq P(s_o) \, Q(t_2) \, P(s_o) 
\leq P(s_o) \, \oneA \, P(s_o) = P(s_o),
\ \ \forall \, t_1 \leq t_2 \in \bR;
\]
this, in turn, implies that the function $\bR \ni t \mapsto F(s_o,t)$ 
is non-decreasing, with values in $[0, \varphi (  P(s_o) )].$
A standard calculation based on (\ref{eqn:X4c}) then shows that the latter
function is continuous from the right and has
$\lim_{t \to - \infty} F(s_o,t) = 0$,
$\lim_{t \to \infty} F(s_o,t) = \varphi (  P(s_o)  ).$

Likewise, one sees that for fixed $t_o \in \bR$ the function
$\bR \ni s \mapsto F(s,t_o)$ is non-decreasing and continuous
from the right, with
$\lim_{s \to - \infty} F(s,t_o) = 0$,
$\lim_{s \to \infty} F(s,t_o) = \varphi (  Q(t_o)  ).$
Arguments similar to the above show, moreover, that $F$ has correct limits 
(of $0$ or $1$) when both $s, t \to \pm \infty$. 
		
In order to argue that $F$ is a $2$-dimensional cumulative function, we are left 
to verify that for every $s_1 < s_2$ and $t_1 < t_2$ in $\bR$ we have
		\begin{equation}   \label{eqn:Y1d}
			F(s_2,t_2) - F(s_2,t_1) - F(s_1,t_2) + F(s_1,t_1) \geq 0.
		\end{equation}
The reader should have no difficulty to perform the bit of linear algebra 
%
%
which rewrites the quantity from (\ref{eqn:Y1d}) in the form
		$\varphi \bigl( \,  (P(s_2) - P(s_1)) \,
		(Q(t_2) - Q(t_1)) \, (P(s_2) - P(s_1) \, \bigr)$; the latter
		quantity is indeed sure to be $\geq 0$, 
		since $\varphi$ is applied there to an element from $\cA^{+}$.
		
		\vspace{6pt}
		
		As a consequence of the above, we infer the existence of 
		a probability measure $\mu$ on the Borel sigma-algebra of 
		$\bR^2$, such that
		$\mu \bigl( \, ] - \infty , s] \, \times  \, ] - \infty, t] \, \bigr)
		= F(s,t), \ \ \forall \, s,t \in \bR.$
		
		\vspace{10pt}
		
		{\em Verification that (\ref{eqn:Y1a}) holds.} The 
		measure $\mu$ constructed above has the property that
		\begin{equation}   \label{eqn:Y1f}
			\mu \bigl( \, ] s_1, s_2] \, \times \, ] t_1, t_2] \, \bigr)
			= \varphi \bigl( \, \mathbf{1}_{ ] s_1, s_2] } (x)
			\, \mathbf{1}_{ ] t_1, t_2] } (y) \, \bigr),
			\ \ \forall \, s_1 < s_2 \mbox{ and $t_1 < t_2$ in $\bR$;}
		\end{equation}
		this holds because, as immediately verified, both sides of (\ref{eqn:Y1f}) 
		are equal to the algebraic expression considered in (\ref{eqn:Y1d}).
		
		In order to upgrade (\ref{eqn:Y1f}) to the formula indicated in (\ref{eqn:Y1a}),
		we use an intermediate step, as follows: for every $s_1 < s_2$ in $\bR$ we put 
		\[
		\cV_{s_1,s_2} := \{ g \in \mathrm{Bor}_b ( \bR ) \mid 
		\mbox{ (\ref{eqn:Y1a}) holds for } \mathbf{1}_{ ]s_1, s_2] }
		\mbox{ and } g \},
		\]
		and (by invoking appropriate features of the Borel functional calculus 
		for the element $y$) we observe that
		$\cV_{s_1, s_2}$ is a linear subspace of $\mathrm{Bor}_b ( \bR )$ which 
		is closed under the pointwise convergence of uniformly bounded 
		sequences of functions.  
		But let us also observe that, as a consequence of (\ref{eqn:Y1f}), the space 
		$\cV_{s_1, s_2}$ contains the indicator functions of all the half-open intervals 
		$] t_1, t_2]$ with $t_1 < t_2$ in $\bR$.  Together with stability
		under linear combinations and under uniformly bounded pointwise convergence, 
		this implies that $\cV_{s_1, s_2} = \mathrm{Bor}_b ( \bR )$.
		
		Now, for every $g \in \mathrm{Bor}_b ( \bR )$ let us put 
		$\cW_g := \{ f \in \mathrm{Bor}_b ( \bR ) \mid 
		\mbox{(\ref{eqn:Y1a}) holds for $f$ and $g$} \}.$
		The intermediate step taken in the preceding paragraph assures us that
		$\cW_g$ contains the indicator functions of all the half-open intervals 
		$] s_1, s_2]$ with $s_1 < s_2$ in $\bR$.  On the other hand, $\cW_g$
		is found (by invoking appropriate features of the Borel functional calculus 
		for the element $x$) to be a linear subspace of $\mathrm{Bor}_b ( \bR )$ 
		which is closed under the pointwise convergence of uniformly bounded 
		sequences of functions.  Putting together all these properties forces the 
		conclusion that $\cW_g = \mathrm{Bor}_b ( \bR )$.
		
		Finally: the equality $\cW_g = \Bor_b ( \bR )$, holding for all
		$g \in \Bor_b ( \bR )$, amounts precisely to the fact that the required 
		Equation (\ref{eqn:Y1a}) holds for all $f,g \in \mathrm{Bor}_b ( \bR )$.
		
		\vspace{10pt}
		
		{\em Uniqueness of $\mu$.}  Let $\nu$ be a probability measure on $\bR^2$ which has 
		the same property as described for $\mu$ in (\ref{eqn:Y1a}).  For any two Borel sets 
		$A,B \subseteq \bR$ we then find that 
		\[
		\nu (A \times B) = \varphi \bigl( \, \mathbf{1}_A (x) \, \mathbf{1}_B (y) \, \bigr)
		= \mu (A \times B).
		\]
		The measures $\mu$ and $\nu$ thus agree on the collection of sets 
		$\{ A \times B \mid A,B \mbox{ Borel subsets of } \bR \}$.  Since the latter 
		collection is a $\pi$-system which generates the Borel sigma-algebra of $\bR^2$, 
		we conclude that $\nu = \mu$.
\end{proof}

\vspace{10pt}

\begin{remark}    \label{rem:Y2}
$1^o$ In the preceding proposition it was convenient to only state 
Equation (\ref{eqn:Y1a}) for $f,g \in \Bor_b ( \bR )$, as that does not 
require any discussion concerning the integrability of the function 
$(s,t) \mapsto f(s) g(t)$ on the left-hand side.  We leave it as an exercise 
to the reader to check that, due to how the framework of a Borel-ncps was 
set in Section 2, the equality stated in (\ref{eqn:Y1a}) continues to make 
sense and to hold true for functions $f,g \in \Borpol ( \bR )$ -- that is, 
the function $(s,t) \mapsto f(s) g(t)$ is $\muovxy$-integrable, and its integral
against $\muovxy$ is equal to $\varphi \bigl( \, f(x) \, g(y) \, \bigr)$.

\vspace{6pt}

\noindent
$2^o$ When in Equation (\ref{eqn:Y1a}) we set the function $g$ to be identically 
equal to $1$, we find that the first one-dimensional marginal of $\muovxy$ (that is,
the Borel probability measure on $\bR$ defined by $A \mapsto \muovxy ( A \times \bR )$)
coincides with the distribution $\mu_x$ of the element $x$ with respect to $\varphi$, 
as defined in Section 2 (cf.~Remark and Notation \ref{rem:X5}). 
A similar argument shows that the second marginal of $\muovxy$ is equal to $\mu_y$,
the distribution of $y$ with respect to $\varphi$.
\end{remark}

		
\vspace{10pt}
		
\begin{definition}  \label{def:Y3}
Consider the same framework and notation as above and suppose that the 
overlap measure $\muovxy$ is absolutely continuous with respect to the 
direct product of its marginals $\mu_x$ and $\mu_y$.  The Radon-Nikodym derivative
\begin{equation}   \label{eqn:Y3a}   
\ovxy := \frac{d \muovxy}{d ( \mu_x \times \mu_y )}
\end{equation}
will be referred to as the {\em overlap function} of $x$ and $y$.
%
%
\end{definition}

\vspace{10pt}
	
\begin{example}   \label{example:Y4}  
In the setting of Proposition and Definition \ref{prop:Y1}, consider the 
		situation where $x$ and $y$ are independent, either in the 
		classical (commutative) sense, or in the sense of free probability.  This 
		assumption has the consequence that one can factor
		\[
		\varphi \bigl( \, f(x) \, g(y) \, \bigr) =
		\varphi \bigl( \, f(x) \, \bigr) \cdot
		\varphi \bigl( \, g(y) \, \bigr),
		\ \ \forall \, f,g \in \Bor_b ( \bR ).
		\]
		The latter equality can be read as saying that
		\[
		\int_{\bR^2} f(s) \, g(t) \, d \muovxy (s,t) =
		\int_{\bR} f(s) \, d \mu_x (s) \cdot
		\int_{\bR} g(t) \, d \mu_y (t),
		\]
which entails that the overlap measure $\muovxy$ is equal to the direct
product $\mu_x \times \mu_y$. This is hence a case when the overlap function 
$\ovxy$ is defined, and is identically equal to $1$.   
	%
	%
	%
        %
\end{example}
	
\vspace{10pt}

\begin{example}   \label{example:Y5}   
Consider the situation where our Borel-ncps $( \cA , \varphi )$ has 
$\cA = \cM_N ( \bC )$, with $\varphi = \mathrm{tr}_N$ (normalized trace),
and where the elements of $\cA^{\mathrm{sa}}$ are thus $N \times N$ 
Hermitian matrices.  Let us pick some $x,y \in \cA^{\mathrm{sa}}$, and 
let $\lambda_1, \ldots , \lambda_N$ and $\rho_1, \ldots , \rho_N$ be the 
eigenvalues of $x$ and respectively of $y$, counted with multiplicities.        
Moreover, let us consider two orthonormal bases $u_1, \ldots , u_N$ and 
$v_1, \ldots , v_N$ for $\bC^N$, such that 
$x (u_k) = \lambda_k u_k$ and $y (v_k) = \rho_k v_k$ for
$1 \leq k \leq N$.  Some elementary linear algebra calculations show that,
in this situation, the overlap measure of $x$ and $y$ is
\[
\muovxy = \sum_{k, \ell = 1}^N 
\frac{ \mid \langle u_k \, , \, v_{\ell} \rangle \mid^2 }{N} 
\, \delta_{( \lambda_k, \rho_{\ell} )}
\]
(where $\delta_{( \lambda, \rho )}$ stands for  the Dirac mass concentrated 
at the point $(\lambda , \rho ) \in \bR^2$).

We note that in this example we have
\[
\mu_x \times \mu_y = \Bigl( \, \frac{1}{N} \sum_{k=1}^N \delta_{\lambda_k} \, \Bigr)
\times \Bigl( \, \frac{1}{N} \sum_{k=1}^N \delta_{\rho_k} \, \Bigr) 
= \frac{1}{N^2} \sum_{k, \ell =1}^N \delta_{ ( \lambda_k, \rho_{\ell} ) },
\]
which makes it clear that $\muovxy \ll \mu_x \times \mu_y$ (indeed, absolute 
continuity simply amounts here to the fact that the set of atoms of 
$\muovxy$ is contained in the set of atoms of $\mu_x \times \mu_y$).  Given 
eigenvalues $\lambda$ of $x$ and $\rho$ of $y$, the value 
at $( \lambda , \rho )$ of the overlap function $\ovxy$ is the ratio
$\muovxy \bigl( \, \{ \, ( \lambda, \rho ) \, \} \, \bigr)/\Bigl( 
\mu_x ( \, \{\lambda\} \, ) \cdot \mu_y ( \, \{ \rho \} \, ) \Bigr).$  
An easy application of Parseval's formula shows that the latter quantity can 
be also written as
\[
\ovxy ( \lambda , \rho ) = N \, \frac{\mathrm{Tr} (p \, q)}{\mathrm{Tr}(p) \cdot 
\mathrm{Tr} (q) },
\]
where $p$ is the projection onto the eigenspace of $x$ corresponding to $\lambda$,
and $q$ is the projection onto the eigenspace of $y$ corresponding to $\rho$.
\end{example}
    
\vspace{0.5cm}

\section{Conditional expectations, via disintegration of the overlap measure}
\label{section:Z}

In this section we fix a $W^{*}$-probability space
$( \cA , \varphi )$, as considered in Example \ref{example:X7}, and we discuss 
some additional structure that appears in this setting.

\vspace{10pt}

\begin{remark-and-notation}   \label{rem:Z1}
%
As mentioned in Example \ref{example:X7},
the spectrum $\Spec (x)$ of an $x \in \cA^{\mathrm{sa}}$ is a non-empty 
compact subset of $\bR$, and the support of the distribution $\mu_x$ is precisely 
equal to $\Spec (x)$. 

Let us now simultaneously consider two elements $x,y \in \cA^{\mathrm{sa}}$, and
their overlap measure $\muovxy$.  The observation recorded in the preceding
paragraph entails that $\muovxy$ has compact support, contained in 
$\Spec (x) \times \Spec (y)$ (the Cartesian product of the supports of the two 
marginals $\mu_x$ and $\mu_y$ of $\muovxy$).  In connection to that, an important 
piece of structure which will be consistently used in what follows is that 
$\muovxy$ can be {\em disintegrated} with respect to either of its marginals $\mu_x$
or $\mu_y$ (see e.g.~\cite[Chapter III, pages 78-79]{DellacherieMeyer}). 
For instance, the disintegration with
respect to $\mu_y$ comes in the guise of a family
		$\bigl\{ k_t^{(2)} \mid t \in \Spec(y) \bigr\}$, where every 
		$k_t^{(2)}$ is a probability measure on the Borel sigma-algebra of $\Spec(x)$,
		such that the following things happen.
		First, for every bounded Borel measurable function $f : \Spec(x) \to \bC$, 
		the ``fiber-wise integration of $f$'',
\begin{equation}   \label{eqn:Z1a}
			t \mapsto \int_{\Spec(x)} f(s) \, d k_t^{(2)} (s) \mbox{ for } t \in \Spec(y),
\end{equation}
		is a bounded Borel measurable function on $\Spec(y)$.
		Then, for every pair of bounded Borel measurable functions 
		$f : \Spec(x) \to \bC$ and $g : \Spec(y) \to \bC$, one has that
		\begin{equation}   \label{eqn:Z1b}
			\int_{\Spec(y)} \Bigl[ \, \int_{\Spec(x)} f(s) \, d k_t^{(2)} (s) \, \Bigr]
			\, g(t) \, d \mu_y (t) 
			= \int_{ \Spec(x) \times \Spec(y) } f(s)g(t) \, d \muovxy (s,t).
\end{equation}
		
		\vspace{6pt}
		
		\noindent
		Likewise, disintegration of $\muovxy$ with respect to $\mu_x$ creates a
		family $\bigl\{ k_s^{(1)} \mid s \in \Spec(x) \bigr\}$, where every $k_s^{(1)}$ 
		is a probability measure on the Borel sigma-algebra of $\Spec(y)$, and where 
		formulas symmetric to (\ref{eqn:Z1a}) and (\ref{eqn:Z1b}) are holding.
\end{remark-and-notation} 
        
\vspace{10pt}

Another relevant piece of structure that we are sure to have for the 
$( \cA , \varphi )$ of this section concerns conditional expectations.
    
\vspace{6pt}

\begin{remark-and-notation}   \label{rem:Z2}
%
For every von Neumann subalgebra $\cB \subseteq \cA$ one has a 
$\varphi$-preserving {\em conditional expectation}
onto $\cB$, which will be denoted as $E( \cdot | \cB )$; this is the 
linear map from $\cA$ to $\cB$ uniquely determined by the requirement that 
        \begin{equation}  \label{eqn:Z2a}
        \varphi \bigl( \, E(a |\mathcal{B}) \, b \, \bigr) = \varphi (ab), 
        \ \ \forall \, a\in \cA \mbox{ and } b\in\cB.
        \end{equation}
From (\ref{eqn:Z2a}) it is immediate that the map $E( \cdot | \cB )$ is 
a projection (that is, $E(b | \cB ) = b$ for $b \in \cB$), and indeed has the 
``conditional expectation property'' (a.k.a.~$\cB$-bimodule property) that 
$E(b_1 a b_2 | \cB ) = b_1 \, E(a | \cB ) \, b_2$ for all $a \in \cA$
and $b_1, b_2 \in \cB$.  $E( \cdot | \cB )$ is, moreover, a normal, 
positive and faithful map (see e.g.~\cite[Section V.2]{Takesaki}); its positivity 
        also implies, in particular, that $E(a| \cB)$ is selfadjoint in $\cB$ whenever 
        $a$ is selfadjoint in $\cA$.  For the present paper, a relevant 
        property of $E( \cdot | \cB )$ (easily derived from (\ref{eqn:Z2a})) is 
        that it serves as orthogonal projection in the $L^2$-space associated to
        the trace $\varphi$; thus, for every $a \in \cA$ one has
        \begin{equation}  \label{eqn:Z2b}
        || \, a - E(a | \cB ) \, ||_2 \leq || a-b ||_2, 
        \ \ \forall \, b \in \cB, 
        \mbox{ with equality if and only if } b = E(a | \cB ),
        \end{equation}
	where $|| \cdot ||_2$ is the $L^2$-norm associated to $\varphi$.

       A special case of the above: given an 
       $y \in \cA^{\mathrm{sa}}$, we will use the notation $\bE(\cdot | y)$ 
       to refer to the conditional expectation onto the von Neumann 
       subalgebra $\cB \subseteq \cA$ generated by $y$.  This subalgebra can 
       be explicitly described by using functional calculus: 
       $\cB = \bigl\{ g(y) \mid g \in \Bor_b ( \, \Spec (y) \, ) \bigr\},$
       where $\Bor_b ( \, \Spec (y) \, )$ is the space of bounded Borel 
       functions from $\Spec (y)$ to $\bC$.  For $x \in \cA^{\mathrm{sa}}$, a 
       convenient way of describing $\bE(x | y)$ is thus in the form 
       $E(x | y ) = h(y)$, 
       where $h : \Spec (y) \to \bR$ is a bounded Borel function such that
       \begin{align}\label{eqn:Z2c}
	    \varphi(x \, g(y))=\varphi \bigl( \, h(y)\, g(y) \, \bigr), 
            \ \ \forall \, g \in \Bor_b ( \, \Spec (y) \, ).
	\end{align} 
        Note that, in view of (\ref{eqn:Z2b}), $h(y)$ is 
        the unique element of the von Neumann algebra generated by $y$ 
	which is at minimal distance from $x$, in the $|| \cdot ||_2$-norm 
        associated to $\varphi$.
\end{remark-and-notation} 

\vspace{10pt}
	
Let us now clarify how the disintegration of the overlap measure
appears in calculations of conditional expectations.
	
\vspace{6pt}
	
\begin{proposition}   \label{prop:Z3} 
Let $x,y \in \cA^{\mathrm{sa}}$ and let $f : \Spec(x) \to \bR$ be a bounded Borel 
function.  Consider the element $f(x) \in \cA^{\mathrm{sa}}$ obtained by functional 
calculus, and then consider the conditional expectation $E( f(x) \, | \, y )$.
On the other hand, let $\muovxy$ be the overlap measure of $x$ and $y$, and consider
the disintegration $( k_t^{(2)} )_{t \in \Spec (y)}$ of $\muovxy$ that was mentioned
in Remark and Notation \ref{rem:Z1}.  One has that
\begin{equation}   \label{eqn:Z3a}
E( f(x) \mid y ) = h(y) \ \mbox{ (functional calculus of $y$)},
\end{equation}
where $h : \Spec(y) \to \bR$ is obtained by fiber-wise integration as in (\ref{eqn:Z1a}),
\begin{equation}   \label{eqn:Z3b}  
			h(t) := \int_{\Spec(x)} f (s) \, d k_t^{(2)} (s), 
			\mbox{ for } t \in \Spec(y).
\end{equation}
\end{proposition}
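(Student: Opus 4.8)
The plan is to verify that the explicit function $h$ defined in (\ref{eqn:Z3b}) satisfies the characterizing property of the conditional expectation recorded in (\ref{eqn:Z2c}), applied with the selfadjoint element $f(x)$ playing the role of the generic element there. Since $f$ is real-valued, $f(x) \in \cA^{\mathrm{sa}}$, so (\ref{eqn:Z2c}) is available and tells us that $E( f(x) \mid y )$ is of the form $\tilde h (y)$ for a bounded Borel $\tilde h$ determined by the requirement
\[
\varphi \bigl( \, f(x) \, g(y) \, \bigr) = \varphi \bigl( \, \tilde h(y) \, g(y) \, \bigr), \qquad \forall \, g \in \Bor_b ( \Spec(y) ),
\]
with uniqueness (modulo $\mu_y$-null sets) coming from the $L^2$-minimality property (\ref{eqn:Z2b}). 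It therefore suffices to show that our candidate $h$ satisfies the same identity; uniqueness then forces $h(y) = E( f(x) \mid y )$, which is (\ref{eqn:Z3a}).

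First I would confirm that $h$ is an admissible input for the functional calculus of $y$, i.e.~a bounded Borel function on $\Spec(y)$. Its measurability is precisely the first disintegration property (\ref{eqn:Z1a}), which asserts that fibre-wise integration of a bounded Borel $f$ yields a bounded Borel function of $t$; its boundedness is immediate, since $|f| \leq M$ on $\Spec(x)$ gives $|h(t)| \leq M$ for all $t$, each $k_t^{(2)}$ being a probability measure.

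The core computation is then to evaluate both sides of the displayed identity and see that they coincide. For the left-hand side I would invoke the defining relation (\ref{eqn:Y1a}) of the overlap measure, legitimately restricted to $\Spec(x) \times \Spec(y)$ since $\muovxy$ is supported there (Remark and Notation \ref{rem:Z1}), to write $\varphi ( f(x) g(y) ) = \int_{\Spec(x) \times \Spec(y)} f(s) g(t) \, d \muovxy (s,t)$; the disintegration identity (\ref{eqn:Z1b}) then rewrites this as $\int_{\Spec(y)} h(t) g(t) \, d \mu_y (t)$, recognizing the inner integral as precisely $h(t)$. For the right-hand side, functional calculus being a homomorphism gives $h(y) g(y) = (hg)(y)$ with $hg$ a bounded Borel function of $y$, whence the integral representation (\ref{eqn:X5a}) yields $\varphi ( h(y) g(y) ) = \int_{\Spec(y)} h(t) g(t) \, d \mu_y (t)$. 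The two expressions agree, establishing the identity and hence the proposition.

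As for where the difficulty lies: there is essentially none of an analytic nature, the argument being a chaining together of three already-established ingredients -- the overlap measure formula, the disintegration identity, and the $L^2$-characterization of the conditional expectation. The only point requiring a little attention is the measurability and boundedness of $h$, which is what makes $h(y)$ well-defined and legitimizes the appeal to uniqueness in (\ref{eqn:Z2c}); this is handled at the outset via (\ref{eqn:Z1a}).
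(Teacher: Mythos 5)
Your proof is correct and follows essentially the same route as the paper's: both verify the characterizing identity $\varphi(f(x)\,g(y)) = \varphi(h(y)\,g(y))$ for all $g \in \Bor_b(\Spec(y))$ by chaining the defining relation (\ref{eqn:Y1a}) of the overlap measure, the disintegration identity (\ref{eqn:Z1b}), and the definition of $\mu_y$. Your added remarks on the measurability and boundedness of $h$ are a minor (and welcome) elaboration of points the paper leaves implicit.
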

	
\begin{proof} Let $\cB = \bigl\{ g(y) \mid g \in \Bor_b ( \Spec (y) ) \bigr\}$ be the 
von Neumann subalgebra of $\cA$ generated by $y$.  We have $h(y) \in \cB$, and in order 
to establish (\ref{eqn:Z3a}) we need to check that 
\begin{equation}   \label{eqn:Z3c}
\varphi ( f(x) g(y) ) = \varphi ( h(y) g(y) ), \ \ \forall \, g \in \Bor_b ( \Spec (y) ).
\end{equation}
And indeed, by starting from the left-hand side of (\ref{eqn:Z3c}), we can write:
		\begin{align*}
			\varphi ( f(x) g(y) )
			& = \int_{\Spec(x) \times \Spec(y)} f(s) g(t) \, d \muovxy (s,t) 
			\ \mbox{ (by (\ref{eqn:Y1a})) }  \\
			& = \int_{\Spec(y)} \Bigl[ \, \int_{\Spec(x)} f(s) \, d k_t^{(2)} (s) \, \Bigr]
			\, g(t) \, d \mu_y (t)   \ \mbox{ (by (\ref{eqn:Z1b})) }   \\ 
			& = \int_{ \Spec(y)} h(t) g(t) \, d \mu_y (t) 
			\ \mbox{ (by how $h$ is defined)}   \\
			& = \varphi (h(y) g(y)) \ \mbox{ (by how $\mu_y$ is defined).} 
		\end{align*}
		The latter quantity is the right-hand side of (\ref{eqn:Z3c}), as required.
	\end{proof}
	
	\vspace{10pt}
	
	\begin{remark}   \label{rem:Z4}
		In the setting of the preceding proposition, the function 
		$h : \Spec(y) \to \bC$ defined in Equation (\ref{eqn:Z3b}) has in particular 
		the property that
		\begin{equation}   \label{eqn:Z4a}
			\int_{\Spec (y)} h (t) \, d \mu_y (t) = \int_{\Spec (x)} f(s) \, d \mu_x (s).
		\end{equation}
		Indeed, both sides of (\ref{eqn:Z4a}) are found to be equal to 
		$\varphi ( \, f(x) \, )$, where on the left-hand side we start from 
		$\varphi ( \, h(y) \, )$, re-write it as $\varphi ( \, E(f(x) \mid y) \, )$,
		and use the fact that 
		$\varphi \circ E( \cdot | y ) = \varphi$. 
        
We record here the following consequence of (\ref{eqn:Z4a}), which will be used 
in Section 6 below:
		\begin{equation}   \label{eqn:Z4b}
			\left\{ \begin{array}{l}
				\mbox{Let $V \subseteq \Spec (x)$ be a Borel set with $\mu_x (V) = 1$.}   \\ 
				\mbox{There exists a Borel set $N \subseteq \Spec (y)$, with $\mu_y (N) = 0$,} \\
					\mbox{such that $k_t^{(2)} (V) = 1$ for every $t \in \Spec (y) \setminus N$.}
				\end{array}  \right.
			\end{equation}
			For the verification of (\ref{eqn:Z4b}) we let $f : \Spec (x) \to \bR$
			be the indicator function of $\Spec (x) \setminus V$, which forces the 
			quantities in (\ref{eqn:Z4a}) to be equal to $0$, and where the function 
			$h$ on the left-hand side has taken the form
			$h(t) = 1 - k_t^{(2)} (V), \, t \in \Spec(y).$
			Since $h$ is non-negative with $\int h \, d \mu_y = 0$, we get the 
			conclusion stated in (\ref{eqn:Z4b}).
		\end{remark}

\vspace{0.5cm}

\section{Review of some subordination results in free probability}
\label{SubordReview}
   
We will consider several analytic transforms which characterize probability 
measures on the real line, as follows.
    
\vspace{10pt}
	
\begin{notation}   \label{def:21}
Let $\mu$ be a probability measure on the Borel sigma-algebra of $\bR$.
We will use the customary notation $G_{\mu}$ for the Cauchy transform of
$\mu$, defined by
		\begin{equation*}
			G_{\mu} (z) := \int_{\bR} \frac{1}{z-t}d\mu(t),
            \ \  z \in \bC \setminus \mathrm{supp} ( \mu )
		\end{equation*}
(where $\mathrm{supp} ( \mu ) \subseteq \bR$ is the support of $\mu$).
Then for $z \in \bC \setminus \{ 0 \}$ such 
that $1/z \not\in \mathrm{supp} ( \mu )$ we define
		\begin{equation*}     
			\psi_{\mu} (z) := \int_{\bR}\frac{zt}{1-zt}d\mu(t) 
			\mbox{ and }
			\eta_{\mu} (z) := \frac{\psi_{\mu} (z)}{1+\psi_{\mu} (z)}.
		\end{equation*}
We will refer to the functions $\psi_{\mu}$ and $\eta_{\mu}$ by calling 
them {\em moment transform} and respectively {\em Boolean-cumulant transform} 
of $\mu$.
        %
	%
        %
		
We will also make occasional use of the {\em Hilbert transform} of $\mu$,
which is defined as
		\begin{align*}
			H_\mu(t)=\frac{1}{\pi}\lim_{\varepsilon\to 0} Re (G_{\mu}(t+i\varepsilon)),
		\end{align*}
with the latter limit known to exist for Lebesgue almost every $t \in \bR$
(see e.g.~\cite[Chapter X.3]{Torchinsky}). 
	\end{notation}
	
	\vspace{10pt}
	
	The next lemma records some basic properties of the Cauchy transform.
	
	\vspace{6pt}
	
	\begin{lemma}  \label{lem:22}
        {\em (Cf.~\cite[Lemma 2.17]{Belinschi2}, and the proof references indicated there.)}

        \noindent
	Let $\mu$ be a Borel probability measure on $\bR$, and denote by $\mu^s$,
	and $\mu^{ac}$ the singular and absolutely continuous (with respect to Lebesgue measure) 
        parts of $\mu$.
		
	\vspace{6pt}
		
	\noindent    
	$1^o$ For $\mu^s$-almost all $x\in\bR$, the non-tangential limit of the 
	Cauchy transform $G_\mu$ at $x$ is infinite : $\sphericalangle\lim_{z\to x}G_\mu(z)=\infty$.
		
	\vspace{6pt}
		
	\noindent
	$2^o$ For every $x\in\bR$, we have $\mu(\{x\})=\sphericalangle\lim_{z\to x}(z-x) G_\mu(z)$.
		
	\vspace{6pt}
		
	\noindent
	$3^o$ Denote by $f$ the density of $\mu^{ac}$ w.r.t. the Lebesgue measure,
	then for Lebesgue--almost all $x\in\bR$, 
	$f(x)=-\frac{1}{\pi}\sphericalangle\lim_{z\to x} \operatorname{Im} (G_\mu(z))$.
	\hfill $\square$
	\end{lemma}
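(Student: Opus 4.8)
The plan is to reduce all three statements to the boundary behaviour of the Poisson integral of $\mu$ in the upper half-plane, together with one elementary dominated-convergence argument. The unifying observation is that, writing $z = x + i \varepsilon$ with $\varepsilon > 0$, one has $\operatorname{Im} \tfrac{1}{z-t} = - \varepsilon / ( (x-t)^2 + \varepsilon^2 )$, so that
\[
- \frac{1}{\pi} \operatorname{Im} \bigl( G_{\mu} (x + i \varepsilon) \bigr)
= \int_{\bR} \frac{1}{\pi} \frac{\varepsilon}{(x-t)^2 + \varepsilon^2} \, d \mu (t)
= ( P_{\varepsilon} * \mu ) (x),
\]
where $P_{\varepsilon}$ is the Poisson kernel. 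Thus $- \tfrac{1}{\pi} \operatorname{Im} G_{\mu}$ is exactly the Poisson integral of the probability measure $\mu$ (in particular a nonnegative quantity, since $\mu \geq 0$), and parts $1^o$ and $3^o$ become statements about its nontangential limits.

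First I would dispatch $3^o$. By the classical Fatou theorem, for Lebesgue-almost every $x$ the Poisson integral $( P_{\varepsilon} * \mu )$ admits a nontangential limit equal to the symmetric derivative of $\mu$ with respect to Lebesgue measure, which by the Lebesgue differentiation theorem coincides with the density $f(x)$ of $\mu^{ac}$. Feeding this into the displayed identity yields $f(x) = - \tfrac{1}{\pi} \sphericalangle\lim_{z \to x} \operatorname{Im}(G_\mu(z))$ for Lebesgue-almost every $x$, which is $3^o$. For $1^o$ I would invoke the companion fact that the Poisson integral of a finite measure singular with respect to Lebesgue measure tends nontangentially to $+\infty$ at almost every point of that measure; applied to $\mu^s$, and using the positivity bound $P_{\varepsilon} * \mu \geq P_{\varepsilon} * \mu^s$, this gives $( P_{\varepsilon} * \mu )(x) \to +\infty$ for $\mu^s$-almost every $x$. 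Since $|G_\mu(z)| \geq |\operatorname{Im} G_\mu(z)| = \pi ( P_{\varepsilon} * \mu )(x)$, divergence of the Poisson integral forces $|G_\mu(z)| \to \infty$, giving $\sphericalangle\lim_{z\to x} G_\mu(z) = \infty$ for $\mu^s$-almost every $x$.

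Part $2^o$ is independent of the Poisson machinery and is the cleanest of the three. Writing $(z - x) G_\mu(z) = \int_{\bR} \frac{z-x}{z-t} \, d\mu(t)$, I would fix a nontangential cone $\Gamma_\alpha(x) = \{ z : |\operatorname{Re}(z) - x| < \alpha \operatorname{Im}(z) \}$ and observe that there the integrand is uniformly bounded: one has $|z - x| \leq \sqrt{1 + \alpha^2}\, \operatorname{Im}(z)$ while $|z - t| \geq \operatorname{Im}(z)$, so $|(z-x)/(z-t)| \leq \sqrt{1 + \alpha^2}$ for all $t$. As $z \to x$ inside $\Gamma_\alpha(x)$ the integrand converges pointwise to $\mathbf{1}_{\{t = x\}}$, so dominated convergence (the constant bound being $\mu$-integrable) gives $(z-x) G_\mu(z) \to \int_{\bR} \mathbf{1}_{\{t=x\}} \, d\mu(t) = \mu(\{x\})$, valid for every $x \in \bR$.

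The routine parts are the Poisson-kernel identity and the elementary estimate underlying $2^o$; the real content, and hence the main obstacle, is the nontangential (rather than merely vertical) boundary theory for Poisson integrals -- in particular the divergence-to-$+\infty$ statement at $\mu^s$-almost every point used for $1^o$, which rests on the local Fatou theorem and the differentiation theory of singular measures. Since the statement of the lemma already points to \cite{Belinschi2} and the references indicated in its proof, my intention would be to cite these classical harmonic-analysis results rather than reprove them.
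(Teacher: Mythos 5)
Your proposal is correct, but note that the paper itself offers no proof of this lemma at all: it is stated as a review item, quoted from \cite[Lemma 2.17]{Belinschi2} together with the proof references given there, and closed with a $\square$. So the comparison is between your argument and a pure citation. What you supply is in fact the standard argument underlying those references: the identity $-\tfrac{1}{\pi}\operatorname{Im}G_\mu(x'+i\varepsilon)=(P_\varepsilon * \mu)(x')$ reduces parts $1^o$ and $3^o$ to Fatou-type boundary theory for Poisson integrals (nontangential limits equal to the density Lebesgue-a.e., and nontangential divergence to $+\infty$ at $\mu^s$-a.e.\ point), while your part $2^o$ is a complete, self-contained proof via the cone estimate $|(z-x)/(z-t)|\leq\sqrt{1+\alpha^2}$ and dominated convergence — this is exactly how the result is proved in the classical literature. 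Two small points of care: in your part $1^o$ the Poisson integral should be evaluated at $\operatorname{Re}(z)$, i.e.\ $|\operatorname{Im}G_\mu(z)|=\pi\,(P_{\operatorname{Im}z}*\mu)(\operatorname{Re}z)$, not at $x$ itself; and the nontangential (as opposed to merely radial) divergence of the Poisson integral of a singular measure at $\mu^s$-a.e.\ point is the one genuinely nontrivial ingredient — it follows from the fact that the lower symmetric derivative of $\mu^s$ with respect to Lebesgue measure is $+\infty$ at $\mu^s$-a.e.\ point, combined with the elementary comparison $P_{\varepsilon}(x'-t)\geq c_\alpha/\varepsilon$ for $t\in[x-\varepsilon,x+\varepsilon]$ and $|x'-x|\leq\alpha\varepsilon$, so your plan to cite it (rather than reprove it) is legitimate and matches what the paper does globally.
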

	
\vspace{10pt}
	
\subsection{Review of subordination results -- additive case.} \label{subsec:subordination}
	
	$\ $
	
	\noindent
	Let $(\cA,\varphi)$ be an ncps and consider two freely independent selfadjoint elements  
	$a,b \in \cA$.
	We denote the distributions of $a$ and $b$ with respect to $\varphi$ by $\mu$ and $\nu$,
	respectively; note that, due to the free independence of $a$ and $b$, the distribution 
	of $a + b$ is then the free additive convolution $\mu \boxplus \nu$.
	
	As proved in \cite{Biane}, there exists an analytic map $\omega : \bC^{+} \to \bC^{+}$,
	uniquely determined, with the following properties:
	(i) $G_{\mu\boxplus\nu}(z) = G_{\mu}(\omega(z))$ for all $z \in \bC^{+}$;  
	(ii) $\omega$ increases the imaginary part;
	(iii) $\omega(iy)/(iy) \to 1 \mbox{ as } y\to +\infty.$
	This $\omega$ is called the {\em subordination function} of $a+b$ with 
	respect to $a$; if we just want to refer to distributions, $\omega$ will be
	referred to as subordination function of $\mu \boxplus \nu$ with respect to $\mu$.
	
	For the $\omega$ considered above, it can be shown that: for every $s \in \bR$, the 
	assignment $z \mapsto 1/ ( \omega (z) - s )$ defines a function on $\bC^{+}$ which 
	can be identified as Cauchy transform for a Borel probability measure on $\bR$.  
	In the same vein as in the notation from \cite[Theorem 3.1]{Biane}, we denote 
	the latter measure as $k_s$; we thus have
	\begin{align} \label{eqn:subordAdditive}
		\int_{\bR}\frac{1}{z-t}k_s(dt)=\frac{1}{\omega(z)-s}, \ \ \forall
		\, s \in \bR \mbox{ and } z \in \bC^{+}.
	\end{align}
	Moreover, bundling together the measures $k_s$ turns out to create a Feller-Markov 
	kernel $\cK$, where for any bounded Borel function $g : \bR \to \bC$ we define the 
	new function $\cK g$ by putting
	\[
	\cK g(s)=\int_\bR g(t) k_s(dt), \ \ s \in \bR.
	\]
	A relevant point for the considerations of this paper is that the kernel $\cK$
	can be used for computing conditional expectations onto the $W^{*}$-subalgebra 
	$W^{*} (a) \subseteq \cA$ which is generated by $a$; that is, we have
	\[
	\bE(g(a+b)|a)=\cK g(a),
	\mbox{ for $g : \bR \to \bC$ bounded Borel function,}
	\]
	where $\bE ( \cdot | a )$
	is our notation for conditional expectation onto $W^{*} (a)$.
	
	\vspace{6pt}
	
	Finally, a notational comment:
	it is customary to also consider the subordination function of $a+b$ with respect 
	to $b$.  In the few places of the paper where we will do that, we will denote the 
	two subordination functions that arise as $\omega_1$ and $\omega_2$ (hence the 
	$\omega$ discussed above will become $\omega_1$).  In that setting we have 
	the relations
	\[
	G_{\mu\boxplus\nu}(z)=G_\mu(\omega_1(z))=G_\nu(\omega_2(z)),
	\ \ z \in \bC^{+};
	\]
	another useful formula which is known to hold in connection to 
	$\omega_1$ and $\omega_2$ says that
	\[
	\omega_1(z)+\omega_2(z)=\frac{1}{G_{\mu\boxplus\nu}(z)} + z, 
	\ \ z\in\bC^+.
	\]
	
	\vspace{5pt}
	
	\subsection{Review of subordination results -- multiplicative case.} 
	
	$\ $
	
	\noindent
	Subordination results similar to those reviewed in the preceding subsection 
	also hold true in relation to the operation of free multiplicative convolution 
	on $\bR_+=[0, \infty)$.  More precisely, let us look once again at an ncps 
    $(\cA,\varphi)$, where we now consider two {\em positive}
	elements $a,b \in \cA$, such that $a$ and $b$ are freely independent.
	We denote the distributions of $a$ and $b$ by $\mu$ and $\nu$, respectively, and we
	consider the free multiplicative convolution $\mu \boxtimes \nu$, which is the 
	distribution of the positive element $a^{1/2} b a^{1/2}$.  In connection to this,
	\cite[Theorem 3.6]{Biane} proves that there exists an analytic map 
	$\omega : \bC^{+} \to \bC^{+}$, uniquely determined by its properties that it increases
	the argument of every $z \in \bC^{+}$ and that it satisfies the equation
	\[
	\psi_{\mu\boxtimes\nu}(z)=\psi_{\mu}(\omega(z)), 
	\ \ z \in \bC^{+}, 
	\]
	where the ``$\psi$'' notation is used for
	moment transforms, as in Notation \ref{def:21}.
	This $\omega$ is called the {\em multiplicative subordination function} of 
	$\mu \boxtimes \nu$ with respect to $\mu$.  
	
	In the multiplicative setting we are once again interested in a family of Borel
	probability measures $k_s$, which are now defined on $\bR_{+}$, via the following 
        prescription on their moment transforms:
	\begin{equation}   \label{eqn:25a}
		\int_{\bR}\frac{zt}{1-zt}dk_s(t)
		=\frac{\omega(z)s}{1-\omega(z)s}, \ \ z \in \bC^{+}.
	\end{equation}
	The relevance of these measures $k_s$ is that
	for any bounded Borel function $g$ on $\bR_+$ we have
	\begin{equation*}
		\bE( \, g(a^{1/2}ba^{1/2}) \mid a \, ) \ = \ \cK g(a),
	\end{equation*}
	with the kernel $\cK$ defined by the formula
	\begin{equation*}
		\cK g(s)=\int_{\bR_+} g(t) k_s(dt). 
	\end{equation*}
	
	One considers also subordination with respect to $\nu$, if we do so we will
	use notation $\omega=\omega_1$ and we denote by $\omega_2$ the second 
	subordination function for which we have $\psi_{\mu\boxtimes\nu}=\psi_\nu(\omega_2(z))$. 
	One has $\omega_1(z)\omega_2(z)=z\eta_{\mu\boxtimes\nu}(z)$, where 
	$\eta_{\mu\boxtimes\nu}$ is the Boolean-cumulant transform function mentioned 
	in Notation \ref{def:21}.
	
\vspace{0.5cm}
		
\section{Overlap function for $x = a$ and $y = a+b$, where $a$ is free from $b$}
\label{AdditiveFormulas}
		
		In this section, the selfadjoint elements $x,y \in \cA$ from the preceding section 
		take the form $x = a$ and $y = a + b$, where $a,b \in \cA$
		are selfadjoint and freely independent.  We will denote the distributions of 
		$a$ and $b$ by $\mu$ and $\nu$, respectively; due to the freeness assumption 
            on $a$ and $b$, the distribution of $a+b$ is then the free additive 
		convolution $\mu\boxplus\nu$.  We will show that, in this setting, the overlap 
		measure $\muovadd$ is sure to be absolutely continuous with respect to 
		$\mu_a \times \mu_{a+b}$, and we will give an explicit formula for the 
		resulting overlap function, with the important benefit that this will enable 
            us to make explicit computations of conditional expectations of the 
		form $\bE(f(a)|a+b)$.
		
		\vspace{10pt}
		
		\begin{remark}\label{rem:41}
			The subordination results reviewed in Subsection \ref{subsec:subordination} 
			allow us to write, for any bounded Borel functions $f,g$
			\begin{align*}
				\varphi(f(a)g(a+b))
                & =\varphi\left(f(a)\bE(g(a+b)|a)\right)=\varphi(f(a)\mathcal{K}g(a))
                                            =\int_{\bR} f(s) \mathcal{K}g(s)d\mu(s) \\ 
                & =\int_{\bR} f(s) \int_{\bR}g(t)dk_s(t)d\mu(s),
			\end{align*}
            where the probability measures $k_s$ are described by Equation 
            (\ref{eqn:subordAdditive}).
		Hence for the overlap measure $\muovadd$ considered here, the first of the 
            two disintegrations considered in Remark and Notation \ref{rem:Z1} is simply 
            described as 
            \[
            \ks = k_s, \ \ s \in \Spec (a).
            \]
            Our goal in this section is that, by starting from the measures $\ks$, we find 
            the second disintegration $\kt$ (with $t$ running in $\Spec (a+b)$) for $\muovadd$.
            When working towards this goal we will rely on a number of known facts about 
            regularity of free additive convolution and subordination functions, which we 
            review next. 
            %
		\end{remark}
		
		\vspace{6pt}
		
		\begin{review}   \label{review:43}
			[Regularity of free additive convolution.]
			Let $\mu,\nu$ be two Borel probability measures on $\bR$, where neither of them 
			is a point mass.  We consider the free additive convolution 
			$\mu\boxplus\nu$, and let $\omega$ be the subordination function of 
			$\mu \boxplus \nu$ with respect to $\mu$.
			
			\vspace{6pt}
			
			\noindent
			$1^o$ $\mu\boxplus\nu$ has at most finitely many atoms 
			$\gamma_i,\, i=1,\ldots,n$. For every $1 \leq i \leq n$ 
			there exist $\alpha_i,\beta_i\in\bR$, uniquely determined, such that 
			$\alpha_i+\beta_i=\gamma_i$ and $\mu(\{\alpha_i\})+\nu(\{\beta_i\})>1$ 
			(see \cite{BercoviciVoiculescu}).
			
			\vspace{6pt}
			
			\noindent
			$2^o$ With notation as in $1^o$ we have $\lim_{z\to\gamma_i} \omega(z)=\alpha_i$, 
			for every $i=1,\ldots,n$ (see \cite{BercoviciVoiculescu}).
			
			\vspace{6pt}
			
			\noindent
			$3^o$ $\mu\boxplus\nu$ has no singular continuous part
			(see \cite{Belinschi2}, Theorem 4.1).
			
			\vspace{6pt}
			
			\noindent
			$4^o$ Let $(\mu\boxplus\nu)^{ac}$ denote the absolutely continuous part 
			(with respect to Lebesgue measure) of $\mu \boxplus \nu$, and let 
			$f_{\mu\boxplus\nu}$ denote the density of $(\mu\boxplus\nu)^{ac}$ with 
			respect to Lebesgue measure.  There exists an open set $U \subset \bR$, with 
			$(\mu\boxplus\nu)^{ac} ( \bR \setminus U) = 0$, such that $f_{\mu\boxplus\nu}$
			is strictly positive on $U$ (see \cite{Belinschi2}, Theorem 4.1).
			
			\vspace{6pt}
			
			\noindent
			$5^o$ Let $U$ be as in $4^o$ above. The subordination function $\omega$ 
			can be continued continuously to the real line, and the continuation 
			(still denoted $\omega$) has strictly positive imaginary part at every 
			point $x\in U$ (see \cite{Belinschi2}, Theorem 4.1).
		\end{review}
		
		\begin{proof}
			Except $5^o$, all statements above are explicitly stated in references. 
			Point $5^o$ requires a bit of explanation. Recall 
			(cf.~final paragraph of Section 2.4) that we denote $\omega=\omega_1$, where 
			$\omega_1$ and $\omega_2$ are the subordination functions of $\mu \boxplus \nu$ 
			with respect to $\mu$ and to $\nu$, respectively.
			As stated in the proof of Theorem 4.1 in \cite{Belinschi2}, at least one of the 
			subordination functions $\omega_1, \omega_2$ has strictly positive imaginary part
			at $x\in U$. Suppose that the continuation of $\omega_2$  has strictly positive
			imaginary part at $x\in\mathbb{R}$. We will argue that so does the continuation 
			of $\omega_1=\omega$. One has $\omega_1(z)+\omega_2(z)=1/G_{\mu\boxplus\nu}(z)+z=1/G_\mu(\omega_1(z))+z=1/G_\nu(\omega_2(z))+z$.  Then we have
			\[\omega_1(x)=1/G_\nu(\omega_2(x))-\omega_2(x)+x.\]
			Since for a measure $\nu$ not being a point mass we have strict inequality 
			$\operatorname{Im}( 1/G_\nu(z))>\operatorname{Im}(z)$ for 
			$z = \omega_2 (x) \in\bC^{+}$, we see that $\omega_1(x)$  
			also has strictly positive imaginary part. 
		\end{proof}

\vspace{10pt}

We now proceed with the plan announced above, of examining the measures $k_s$.

\vspace{6pt}
		
	\begin{proposition}\label{prop:44}
        Consider the setting indicated at the beginning of this section (with $x=a$ and $y=a+b$,
        where $a,b$ are freely independent), and let $\mu, \nu$ be the distributions of $a$ and $b$,
        respectively.  We assume that neither $\mu$ nor $\nu$ is a point mass and we consider,
        in connection to them, the various items of notation used in Review \ref{review:43} above. 
        Let the function $o: \Spec (a) \times \Spec (a+b) \to \mathbb{R}_+$ be defined 
        (in $\mu\times (\mu\boxplus \nu)$-almost everywhere sense) by the following formula:
			\begin{align}\label{eqn:44a}
				\forall s\in \mathrm{supp}(\mu), o(s,t)=
				\begin{cases}
					-\frac{1}{\pi} \frac{1}{f_{\mu\boxplus\nu}(t)}
                      \operatorname{Im}\left(\frac{1}{\omega(t)-s}\right)\qquad 
                                                               &\mbox{ if } t\in U,  \\
					\frac{1}{\mu(\{s\})} 1_{\omega(t)=s}\qquad 
                                        &\mbox{ if $t$ is an atom of $\mu\boxplus\nu$.} 
                   \end{cases}
			\end{align}
			Consider, on the other hand, the measure $k_s$ from \eqref{eqn:subordAdditive}. 
			Then: for $\mu$-almost every $s \in \bR$ we have $k_s\ll \mu\boxplus\nu$ and the 
                corresponding Radon-Nikodym derivative is given by the partial function $o(s, \cdot )$ 
                of the $o$ from (\ref{eqn:44a}).
		 \end{proposition}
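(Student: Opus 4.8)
The plan is to identify $k_s$ through its Cauchy transform $G_{k_s}(z)=\tfrac{1}{\omega(z)-s}$ and to match it, piece by piece, against the candidate measure $\tilde{k}_s:=o(s,\cdot)\,d(\mu\boxplus\nu)$. First I would record the Lebesgue decomposition of $k_s$ furnished by the Stieltjes-inversion statements of Lemma \ref{lem:22}, handle the absolutely continuous and the atomic parts by direct computation, and then eliminate any leftover singular mass by a global argument that integrates over $s$ and uses positivity. Throughout I rely on the regularity facts of Review \ref{review:43}: that $\mu\boxplus\nu$ has finitely many atoms $\gamma_i=\alpha_i+\beta_i$, no singular continuous part, an absolutely continuous part carried by $U$ with density $f_{\mu\boxplus\nu}$, and $\omega$ extending continuously to $U$ with $\operatorname{Im}\omega(t)>0$ there.

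For the absolutely continuous part, on $U$ the continuity of $\omega$ together with $\omega(t)\in\bC^{+}$ gives $\sphericalangle\lim_{z\to t}G_{k_s}(z)=\tfrac{1}{\omega(t)-s}$, so Lemma \ref{lem:22}.$3^o$ yields the Lebesgue density $-\tfrac{1}{\pi}\operatorname{Im}\tfrac{1}{\omega(t)-s}$, which is exactly $o(s,t)\,f_{\mu\boxplus\nu}(t)$, the Lebesgue density of $\tilde{k}_s$ on $U$. Off $U$ the density vanishes, because there $\omega$ has real boundary values almost everywhere; indeed, wherever $\operatorname{Im}\omega(t)>0$ one has $f_{\mu\boxplus\nu}(t)=-\tfrac{1}{\pi}\operatorname{Im}G_{\mu}(\omega(t))>0$, so such $t$ lie in $U$ up to a Lebesgue-null set. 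Hence $k_s$ and $\tilde{k}_s$ share the same absolutely continuous part.

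For the atoms, combining the subordination identity $G_{\mu\boxplus\nu}(z)=G_{\mu}(\omega(z))$ with $\omega(\gamma_i)=\alpha_i$ (Review \ref{review:43}.$2^o$) and the local expansion $G_{\mu}(w)\approx \mu(\{\alpha_i\})/(w-\alpha_i)$, I would deduce from Lemma \ref{lem:22}.$2^o$ that $k_s(\{\gamma_i\})=\tfrac{(\mu\boxplus\nu)(\{\gamma_i\})}{\mu(\{\alpha_i\})}$ when $s=\alpha_i$ and $k_s(\{\gamma_i\})=0$ otherwise, where $\mu(\{\alpha_i\})>0$ by Review \ref{review:43}.$1^o$. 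This is precisely $o(s,\gamma_i)\,(\mu\boxplus\nu)(\{\gamma_i\})$, so the $\gamma_i$-atoms of $k_s$ and $\tilde{k}_s$ agree as well.

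The remaining, and main, difficulty is to rule out extra singular mass of $k_s$, namely additional atoms off $\Gamma$ or a singular continuous part, which by Lemma \ref{lem:22}.$1^o$ could a priori sit on the set $\{t:\sphericalangle\lim_{z\to t}\omega(z)=s\}\setminus\Gamma$ where $G_{k_s}$ blows up. By the two previous paragraphs, the difference $\sigma_s:=k_s-\tilde{k}_s$ is a nonnegative singular measure supported off $U\cup\Gamma$. To kill it I would integrate in $s$: taking $f\equiv 1$ in Remark \ref{rem:41} gives $\int k_s\,d\mu(s)=\mu\boxplus\nu$, while a direct computation using $G_{\mu}(\omega(t))=G_{\mu\boxplus\nu}(t)$ and $\operatorname{Im}G_{\mu\boxplus\nu}(t)=-\pi f_{\mu\boxplus\nu}(t)$ on $U$ shows $\int o(s,t)\,d\mu(s)=1$ for $(\mu\boxplus\nu)$-almost every $t$, whence $\int \tilde{k}_s\,d\mu(s)=\mu\boxplus\nu$ too. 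Therefore $\int \sigma_s\,d\mu(s)=0$, and since each $\sigma_s$ is nonnegative we conclude $\sigma_s=0$ for $\mu$-almost every $s$, giving $k_s=\tilde{k}_s$ and finishing the proof. I expect this final positivity-and-integration step to be the crux, since it is what converts the delicate pointwise boundary behavior of $\omega$ into the clean almost-everywhere conclusion.
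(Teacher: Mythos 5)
Your proposal is correct, and it rests on exactly the same pillars as the paper's proof: Stieltjes inversion (Lemma \ref{lem:22}) on $U$, the computation of the atom masses at $\Gamma$ via nontangential limits and the subordination identity, and a global integration-plus-positivity argument based on the disintegration identity $\int k_s\,d\mu(s)=\mu\boxplus\nu$ coming from Remark \ref{rem:41}. The genuine difference is one of ordering: the paper deploys the integration-positivity argument \emph{first}, in the packaged form of Equation (\ref{eqn:Z4b}) applied to $V=\Gamma\cup U$ (whose proof in Remark \ref{rem:Z4} is precisely your ``$\int\sigma_s\,d\mu(s)=0$ and $\sigma_s\ge 0$'' reasoning), concluding that $k_s(\Gamma\cup U)=1$ for $\mu$-almost every $s$; after that, the singular part of $k_s$ is automatically atomic and carried by $\Gamma$, and no analysis of boundary values off $U$ is ever needed. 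You instead match the absolutely continuous parts and the atoms first, and only then kill the residue $\sigma_s$ by integrating in $s$. Both orderings work, but note that your second paragraph (vanishing of the a.c.\ density off $U$, which tacitly requires a.e.\ existence of real nontangential boundary values of $\omega$ off $U$, the subordination relation at the boundary, and a Privalov-type argument on the set $\{t:\omega(t)=s\}$) is actually redundant in your own scheme: the inequality $\tilde k_s\le k_s$, hence $\sigma_s\ge 0$, already follows from the matching on $U$ and at $\Gamma$ alone, because $\tilde k_s$ is carried by $U\cup\Gamma$; whatever mass $k_s$ might have off $U\cup\Gamma$ (absolutely continuous or singular) is swept into $\sigma_s$ and annihilated by the final integration step. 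Trimming that paragraph would make your argument both shorter and closer in spirit to the paper's. One small point of rigor, handled explicitly in the paper: in the atom computation you should note that $\omega(z)\to\alpha_i$ \emph{nontangentially} as $z\to\gamma_i$ nontangentially (this is from \cite{BercoviciVoiculescu}), so that Lemma \ref{lem:22}.$2^o$ can legitimately be applied to the factor $(\omega(z)-s)\,G_{\mu}(\omega(z))$.
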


            %
            %
		%
		%
		%
        
		\begin{proof}
		As reviewed in Remark \ref{review:43}, there exist a finite set $\Gamma$ 
            and an open set $U$, with 
			$\Gamma \cap U  = \emptyset$, such that: the support of the singular part of 
			$\mu\boxplus \nu$ is $\Gamma$, the density $f_{\mu\boxplus \nu}$ of the absolutely
			continuous part of $\mu\boxplus \nu$ is positive on $U$ and 
			$\mu\boxplus \nu(\Gamma\cup U)=1$. These results are obtained by showing that 
			the subordination map $\omega$ of $\mu\boxplus \nu$ with respect to $\mu$ has
			a continuous extension 
			$\omega:\mathbb{C}^+ \cup \mathbb{R} 
			\to \mathbb{C}^+ \cup \mathbb{R} \cup \{\infty\}$, 
			and that $\omega(\Gamma)$ is a set of atoms of $\mu$, while 
			$\omega(U) \subset \mathbb{C}^+$. 
			
			Upon invoking Remark \ref{rem:Z4} in connection to the set 
			$V=\Gamma\cup U$ 
			one gets that $k_s(\Gamma\cup U)=1$ for $\mu$-almost all $s\in \mathbb{R}$.  
			For the rest of the proof, let us fix an $s$ which has 
			$k_s ( \Gamma \cup U)=1$. 
			
			Since the Cauchy transform of $k_s$ takes finite 
			values at all points of $U$, it follows from Lemma \ref{lem:22} that:
			
			\vspace{6pt}
			
			$\bullet$ the singular part of $k_s$ is purely atomic and supported on $\Gamma$;
			
			\vspace{6pt}
			
			$\bullet$ a density of the absolutely continuous part of $k_s$ is 
			$-\pi^{-1}\operatorname{Im}[(\omega-s)^{-1}]\mathbf{1}_{U}$.
			
			\vspace{6pt}
			
			\noindent
			Since the singular part of $\mu\boxplus \nu$ is purely atomic with support $\Gamma$
			and since the density $f_{\mu\boxplus \nu}$ of the absolutely continuous part of 
			$\mu\boxplus \nu$ is positive on $U$, the above bulleted items allow
			us to conclude that $k_s$ is absolutely continuous with respect to $\mu\boxplus\nu$.
			
			In order to prove that the Radon-Nikodym derivative is indeed 
			given by \eqref{eqn:44a}, it remains to verify that 
			\begin{equation}   \label{eqn:43x}
				k_s( \{\gamma\} ) = \frac{\mu \boxplus \nu (\{\gamma\})}{ \mu(\{s\}) }
				1_{s=\omega(\gamma)},\quad \gamma \in \Gamma.
			\end{equation}
			Towards this verification, we consider two cases.
			
			\vspace{6pt}
			
			\noindent
			-- When $s\neq \omega(\gamma)$, (\ref{eqn:43x}) follows from 
			\[
			k_s(\{\gamma\})=\sphericalangle\lim_{z\to\gamma}(z-\gamma)G_{k_{s}}(z)
			=\sphericalangle\lim_{z\to\gamma} \frac{z-\gamma}{\omega(z)-s}=0.
			\]
			
			\vspace{6pt}
			
			\noindent
			-- When $s=\omega(\gamma)$: since $\omega(z)$ tends nontangentially to $\omega(\gamma)=s$
			as $z$ tends nontangentially to $\gamma$ 
			as shown in \cite{BercoviciVoiculescu}), 
			(\ref{eqn:43x}) follows from taking the 
			nontangential limit as $z$ tends to $\gamma$ in 
			\[
			(z-\gamma)G_{k_{s}}(z)=\frac{z-\gamma}{\omega(z)-s}
			=\frac{(z-\gamma)G_{\mu\boxplus \nu}(z)}{(\omega(z)-s)G_{\mu}(\omega(z))}.
			\]  
		\end{proof}
		
		\vspace{6pt}

	\begin{theorem}\label{thm:45}
        Consider the framework and notation of Proposition \ref{prop:44}, and let $\muovadd$ be
        the overlap measure for the elements $a$ and $a+b$.  One has that
        \begin{equation}   \label{eqn:45a}
        d\muovadd(s,t) = o(s,t) \, \mu(ds) \ \mu\boxplus\nu(dt),
        \end{equation}
        where the function $o : \Spec (a) \times \Spec (a+b) \to \bR_{+}$ is defined by
        Equation (\ref{eqn:44a}).  In particular, this implies that
        $\muovadd \ll \mu\times ( \mu\boxplus\nu )$, and that the corresponding overlap function 
        $\ovadd$ is the function $o$ from (\ref{eqn:44a}). 
	\end{theorem}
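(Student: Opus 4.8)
The plan is to read off the statement directly from Proposition~\ref{prop:44} together with the disintegration identity for the overlap measure recorded in Remark and Notation~\ref{rem:Z1}; the genuinely analytic work has already been carried out in establishing Proposition~\ref{prop:44}, and what remains is a bookkeeping argument. Recall from Remark~\ref{rem:41} that the disintegration of $\muovadd$ with respect to its first marginal $\mu$ is exactly the family $(k_s)_{s \in \Spec(a)}$, while Proposition~\ref{prop:44} identifies, for $\mu$-almost every $s$, the measure $k_s$ as absolutely continuous with respect to $\mu \boxplus \nu$ with density equal to the partial function $o(s, \cdot)$. Thus all the ingredients needed to compute $\muovadd$ explicitly are in place.

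Concretely, I would fix bounded Borel functions $f : \Spec(a) \to \bR$ and $g : \Spec(a+b) \to \bR$ and invoke the version of Equation~(\ref{eqn:Z1b}) attached to the first marginal, which (since $\ks = k_s$) reads
\[
\int f(s) g(t) \, d \muovadd (s,t)
= \int_{\Spec(a)} f(s) \Bigl[ \, \int_{\Spec(a+b)} g(t) \, d k_s (t) \, \Bigr] d \mu (s).
\]
Substituting $d k_s (t) = o(s,t) \, d(\mu \boxplus \nu)(t)$ from Proposition~\ref{prop:44} --- which is legitimate for $\mu$-almost every $s$, hence inside the integral against $\mu$ --- and then applying the Fubini--Tonelli theorem, the right-hand side becomes the double integral of $f(s) g(t) o(s,t)$ against $\mu \times (\mu \boxplus \nu)$. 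Since the resulting identity holds for all bounded Borel $f, g$, a $\pi$-system uniqueness argument of the same type as the one concluding the proof of Proposition and Definition~\ref{prop:Y1} forces $\muovadd$ to coincide with the measure $o(s,t) \, \mu(ds) \, (\mu \boxplus \nu)(dt)$, which is precisely~(\ref{eqn:45a}). The claimed absolute continuity and the identification of $\ovadd$ with $o$ are then immediate.

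The one point deserving care is the legitimacy of the interchange of integrals, which requires $o$ to be jointly Borel measurable on $\Spec(a) \times \Spec(a+b)$; this I would verify directly from the defining formula~(\ref{eqn:44a}). On $\Spec(a) \times U$ the function $(s,t) \mapsto -\pi^{-1} f_{\mu\boxplus\nu}(t)^{-1} \operatorname{Im}\bigl( 1/(\omega(t) - s) \bigr)$ is jointly continuous: $\omega$ is continuous on $U$ and has strictly positive imaginary part there by Review~\ref{review:43}.$5^o$, so the denominator $\omega(t) - s$ stays away from zero, while $f_{\mu\boxplus\nu}$ is continuous and strictly positive on $U$. Over the finite atom set $\Gamma$ the formula reduces, at $t = \gamma_i$, to a constant multiple of the indicator of $\{ s = \alpha_i \}$ (using $\omega(\gamma_i) = \alpha_i$ from Review~\ref{review:43}.$2^o$), so measurability there is clear as well. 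Integrability presents no difficulty, since each $k_s$ is a probability measure, giving $\int o(s,t) \, d(\mu \boxplus \nu)(t) = 1$ for $\mu$-almost every $s$; with $f, g$ bounded all the iterated integrals are then finite and the Fubini--Tonelli interchange is justified.
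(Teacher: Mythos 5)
Your proposal is correct and follows essentially the same route as the paper's proof: both combine the identification of the first disintegration of $\muovadd$ with the kernels $k_s$ (Remark \ref{rem:41}), the Radon--Nikodym identification $dk_s(t) = o(s,t)\, d(\mu\boxplus\nu)(t)$ from Proposition \ref{prop:44}, Fubini's theorem, and the uniqueness property characterizing the overlap measure. The only difference is one of direction -- the paper checks that the candidate measure $o(s,t)\,\mu(ds)\,(\mu\boxplus\nu)(dt)$ satisfies the defining property of $\muovadd$, whereas you run the same computation starting from $\muovadd$ -- and your explicit verification of the joint Borel measurability of $o$ spells out a hypothesis of Fubini's theorem that the paper leaves implicit.
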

    
	%
        %
        %
        %
	%
        
        \begin{proof}  Let $\nu$ be the measure on $\Spec (a) \times \Spec (a+b)$
        which is defined by the right-hand side of (\ref{eqn:45a}),
        $d \nu (s,t) = o(s,t) \, \mu(ds) \ \mu\boxplus\nu(dt).$
        We will verify that $\nu$ fulfills the requirement used in order to define  
        the overlap measure $\muovadd$. 

        We know from Proposition \ref{prop:44} that, for any given 
        $g\in \Bor_b ( \bR )$ we have the equality 
        \[
        \int_{\bR} g(t) \, o(s,t) \, d (\mu\boxplus\nu) (t)
        =\int_{\bR} g(t)dk_s(t),
        \]
        holding for $\mu$-almost every $s\in \mathbb{R}$. 
        By using Fubini's Theorem and Remark \ref{rem:41}, we then see that for every
        $f,g \in \Bor_b ( \bR )$ we have
        \begin{eqnarray*}  
        \int_{\bR^2} f(s) \, g(t) \, d \nu (s,t)
         &=& \int_{\bR^2} f(s) \, g(t) \, o(s,t) \, d\mu(s) \, d ( \mu\boxplus\nu) (t)  \\
         &=& \int_{\bR}f(s) \left[ \int_{\bR} g(t)\, o(s,t) \, d\mu\boxplus\nu(t)\right]d\mu(s)\\
         &=& \int_{\bR}f(s) \left[ \int_{\bR} g(t) \, dk_s(t)\right] \, d\mu(s)  \\
         &=& \varphi(f(a)g(a+b)),
   \end{eqnarray*}
   as required.
        %
        %
        %
	\end{proof}

    \vspace{10pt}
    
	\begin{corollary}  \label{cor:46}
        Consider the framework and notation from Proposition \ref{prop:44} and
	  Theorem \ref{thm:45}.  For every bounded Borel function $f : \Spec(a) \to \bR$, 
        one has
		\begin{align*}
			\bE[ f(a) \mid a +b ] = h(a+b),
		\end{align*}
		where $h:\Spec(a+b)\to \mathbb{C}$ is defined $\mu\boxplus\nu$-almost everywhere by
		\begin{align}\label{eqn:46a}
			h(t)=\begin{cases}
				-\frac{1}{\pi}\frac{1}{f_{\mu\boxplus\nu}(t)}\operatorname{Im} 
				\left(\int_{\bR}f(s)  \frac{1}{\omega(t)-s} d\mu(s)\right) &\mbox{ if } t\in U,  \\
				f(\omega(t))&\mbox{ if } t \mbox{ is an atom of } \mu\boxplus\nu.
			\end{cases}
		\end{align}
	\end{corollary}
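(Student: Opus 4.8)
The plan is to combine the disintegration recipe for conditional expectations from Proposition \ref{prop:Z3} with the explicit overlap function produced in Theorem \ref{thm:45}. Applying Proposition \ref{prop:Z3} with $x=a$ and $y=a+b$, one has $\bE[f(a)\mid a+b]=h(a+b)$ with $h(t)=\int_{\Spec(a)}f(s)\,d\kt(s)$, where $(\kt)_{t\in\Spec(a+b)}$ is the disintegration of $\muovadd$ with respect to its second marginal $\mu\boxplus\nu$. The task thus reduces to evaluating this fibre-wise integral explicitly, and the only real work is to identify the measures $\kt$.

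First I would pin down the disintegration $\kt$. Theorem \ref{thm:45} gives $d\muovadd(s,t)=o(s,t)\,\mu(ds)\,(\mu\boxplus\nu)(dt)$, so Fubini's Theorem yields, for all $f,g\in\Bor_b(\bR)$, the identity $\int f(s)g(t)\,d\muovadd(s,t)=\int g(t)\bigl[\int f(s)\,o(s,t)\,\mu(ds)\bigr]\,(\mu\boxplus\nu)(dt)$. Comparing this against the defining property \eqref{eqn:Z1b} of the disintegration and using its essential uniqueness, I conclude that $d\kt(s)=o(s,t)\,\mu(ds)$ for $(\mu\boxplus\nu)$-almost every $t$. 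Feeding this back into Proposition \ref{prop:Z3} gives $h(t)=\int_{\Spec(a)}f(s)\,o(s,t)\,\mu(ds)$, and it remains only to insert the two branches of \eqref{eqn:44a}.

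For $t\in U$ the density is $o(s,t)=-\tfrac{1}{\pi}\,\tfrac{1}{f_{\mu\boxplus\nu}(t)}\operatorname{Im}\bigl(\tfrac{1}{\omega(t)-s}\bigr)$; since $f$ is real-valued and the prefactor $-1/(\pi f_{\mu\boxplus\nu}(t))$ is a real constant once $t$ is fixed, I can pull it outside and move the ($\bR$-linear) $\operatorname{Im}$ past the integral, which reproduces the first branch of \eqref{eqn:46a}. For $t=\gamma$ an atom of $\mu\boxplus\nu$ the density is $o(s,\gamma)=\mu(\{s\})^{-1}\,1_{\omega(\gamma)=s}$, and here lies the one delicate point of the argument: the integrand is supported on the single value $s=\omega(\gamma)$, which by points $1^o$ and $2^o$ of Review \ref{review:43} is precisely the atom $\alpha$ of $\mu$ paired with $\gamma$ in the decomposition $\gamma=\alpha+\beta$. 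Consequently the integral collapses to $f(\omega(\gamma))\,\mu(\{\omega(\gamma)\})^{-1}\cdot\mu(\{\omega(\gamma)\})=f(\omega(\gamma))$, which is the second branch of \eqref{eqn:46a}.

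The main obstacle I anticipate is exactly this atomic case: one must make sure that $\omega(\gamma)$ is a genuine atom of $\mu$ (so that $\mu(\{s\})^{-1}$ is finite and the product $\mu(\{s\})^{-1}\cdot\mu(ds)$ does not degenerate to zero on the relevant singleton), and that the \emph{a priori} ambiguous value of $\mu(\{s\})^{-1}$ off the atoms is harmless because it is killed by the indicator. Both points are secured by the regularity facts recorded in Review \ref{review:43}, which is why I would keep that review close at hand throughout the verification.
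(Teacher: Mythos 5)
Your proposal is correct and follows essentially the same route as the paper: both arguments rest on Theorem \ref{thm:45} together with Fubini's theorem, the only cosmetic difference being that you first identify the disintegration $\kt$ as $o(s,t)\,\mu(ds)$ and then invoke Proposition \ref{prop:Z3}, whereas the paper directly verifies the defining relation $\varphi(f(a)g(a+b))=\varphi(h(a+b)g(a+b))$ for $h(t)=\int f(s)\,o(s,t)\,\mu(ds)$ --- which is exactly what the proof of Proposition \ref{prop:Z3} does anyway. Your extra care in the atomic branch (checking via Review \ref{review:43} that $\omega(\gamma)$ is a genuine atom of $\mu$, so the integral collapses to $f(\omega(\gamma))$) is sound and in fact makes explicit a detail the paper leaves to the reader.
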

    
	\begin{proof}
		 From Theorem \ref{thm:45} and Fubini's theorem one gets that
		\begin{align*}
			\varphi(f(a)g(a+b))&=\int_{\bR^2}f(s)g(t)\ovadd(s,t)\mu(ds)\mu\boxplus\nu(dt)\\
			&=\int_{\bR}\left(\int_{\bR}f(s)\ovadd(s,t)\mu(ds)\right)g(t)\mu\boxplus\nu(dt).
		\end{align*}
		Denoting $h(t)=\int_{\bR}f(s)\ovadd(s,t)\mu(ds)$ we see that
		\[\varphi(f(a)g(a+b))=\varphi(h(a+b)g(a+b)).\]
		Explicit formula for the function $h$ follows directly from the formula \eqref{eqn:44a}. 
        One should observe that $h$ is only defined $(\mu\boxplus\nu)$-almost everywhere, 
        since $U$ may be a proper relatively open subset of $\Spec (a+b)$.
        However, this is sufficient for the correct definition of conditional expectation.
	\end{proof}

    \vspace{10pt}
    
	Since the atomic case considered in (\ref{eqn:46a}) is immediate to apply, 
        we investigate further how the formula (\ref{eqn:46a}) works in the other case, $t\in U$.
	  In the next proposition we observe that for $f(s)=s$ one can process the first branch of
        formula \eqref{eqn:46a} a bit further, and evaluate the integral included in it.
        
\vspace{6pt}

	\begin{proposition}   \label{prop:47}
	In the particular case where $f(s)=s$, $s\in\bR$, the function $h$
        defined by \eqref{eqn:46a} satisfies
		\begin{align}\label{eqn:47a}
		h(t) = -\frac{\operatorname{Im}\left(\omega(t)
              G_{\mu\boxplus\nu}(t)\right)}{\pi f_{\mu\boxplus\nu}(t)}, \ \
              \forall \, t\in U.
		\end{align} 
	\end{proposition}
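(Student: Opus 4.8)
The plan is to evaluate directly the integral appearing in the first branch of (\ref{eqn:46a}) in the case $f(s)=s$. Fix $t\in U$ and abbreviate $w:=\omega(t)$, which lies in $\bC^{+}$ by Review \ref{review:43}.$5^o$. In particular $w$ is off the real line, hence off $\mathrm{supp}(\mu)$, so that $G_{\mu}(w)$ is well-defined; moreover $\mu$ is compactly supported (its support is $\Spec(a)$) and $|w-s|\geq\operatorname{Im}(w)>0$ for every $s\in\mathrm{supp}(\mu)$, so the integrand below is bounded on $\mathrm{supp}(\mu)$ and all the integrals in play are finite.

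First I would perform the elementary decomposition $\tfrac{s}{w-s}=-1+\tfrac{w}{w-s}$ and integrate term by term against $\mu$, obtaining
\[
\int_{\bR}\frac{s}{\omega(t)-s}\,d\mu(s)=-1+\omega(t)\int_{\bR}\frac{1}{\omega(t)-s}\,d\mu(s)=-1+\omega(t)\,G_{\mu}\bigl(\omega(t)\bigr).
\]
Next I would take imaginary parts: the constant $-1$ is real and drops out, leaving
\[
\operatorname{Im}\left(\int_{\bR}\frac{s}{\omega(t)-s}\,d\mu(s)\right)=\operatorname{Im}\bigl(\omega(t)\,G_{\mu}(\omega(t))\bigr).
\]

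Finally I would invoke the boundary subordination identity (\ref{eqn:14x}), valid precisely on $U$, in the form $G_{\mu}(\omega(t))=G_{\mu\boxplus\nu}(t)$, to rewrite the right-hand side above as $\operatorname{Im}\bigl(\omega(t)\,G_{\mu\boxplus\nu}(t)\bigr)$. Substituting this back into the first branch of (\ref{eqn:46a}) gives $h(t)=-\tfrac{1}{\pi}\tfrac{1}{f_{\mu\boxplus\nu}(t)}\operatorname{Im}\bigl(\omega(t)G_{\mu\boxplus\nu}(t)\bigr)$, which is exactly (\ref{eqn:47a}). There is no serious obstacle here; the argument is purely algebraic, and the only points deserving a word of justification are the finiteness of the integrals (handled by the compact support of $\mu$ together with $\operatorname{Im}(\omega(t))>0$) and the applicability of (\ref{eqn:14x}) on $U$, which is precisely the reason $U$ was arranged so that $\omega(t)\in\bC^{+}$ for all $t\in U$.
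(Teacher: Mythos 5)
Your proposal is correct and follows essentially the same route as the paper's own proof: the decomposition $\tfrac{s}{\omega(t)-s}=\tfrac{\omega(t)}{\omega(t)-s}-1$, dropping the real constant under $\operatorname{Im}$, and then the boundary subordination identity $G_{\mu}(\omega(t))=G_{\mu\boxplus\nu}(t)$ on $U$. The only difference is that you spell out the integrability justification (compact support of $\mu$ and $\operatorname{Im}\omega(t)>0$), which the paper leaves implicit.
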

	
	\begin{proof}
		Direct calculation gives
		\begin{align*}
		h(t)
            &= -\frac{1}{\pi}\int_{\bR} s \frac{1}{f_{\mu\boxplus\nu}(t)} 
               \operatorname{Im} \left(\frac{1}{\omega(t)-s}\right) d\mu(s) \\
            & = -\frac{1}{\pi f_{\mu\boxplus\nu}(t)}
                \operatorname{Im}\left(\int_{\bR} \left( 
                \frac{\omega(t)}{\omega(t)-s}-1\right)d\mu(s)\right)   \\
		& = -\frac{\operatorname{Im}\left(
                \omega(t)G_{\mu}(\omega(t))\right)}{\pi f_{\mu\boxplus\nu}(t)}
              = -\frac{\operatorname{Im}\left(\omega(t)
               G_{\mu\boxplus\nu}(t)\right)}{\pi f_{\mu\boxplus\nu}(t)}, \mbox{ as stated.}
		\end{align*}
	\end{proof}

\vspace{10pt}

	Formula (\ref{eqn:47a}) does not seem to further simplify in general, but it allows 
    for some more processing in the case when (the distribution $\mu$ of $a$ remains general, 
    but) the distribution $\nu$ of the element $b$ is assumed to be $\boxplus$-infinitely 
    divisible.  Under the latter assumption, the $R$-transform $r_\nu$ has an analytic 
    continuation to $\bC^+$ and one can write $\omega(z)=z-r_{\nu}(G_{\mu\boxplus\nu}(z))$.  
    Moreover, if we are in the special situation when $b$ is a centred semicircular element of 
    variance $\sigma^2$ (meaning that $\nu$ is the centred semicircle law of variance $\sigma^2$) 
    then the aforementioned $R$-transform takes the form $r_\nu(z)=\sigma^2z$, and we find the 
    nice formula which was announced in Section 1.2 of the Introduction under the name of 
    ``free additive Tweedie's formula''.  The details of this are given in part $1^o$ of the 
    next example.  Part $2^o$ of the example puts into evidence another situation,
    related to $\boxplus$-convolution powers, when the formula (\ref{eqn:47a}) takes
    a particularly nice form.
    %

    \vspace{6pt}
    
	\begin{example} \label{example:47}      
		$1^o$ Suppose that $b$ is a centred semicircular element with variance $\sigma^2$. 
        Then $\mu\boxplus\nu$ is absolutely continuous with respect to Lebesgue measure 
        and (as first observed in \cite{BianeConvolution}) the subordination function $\omega$
        satisfies the equation 
	$\omega(z) = z-\sigma^2 G_{\mu\boxplus\nu}(z), \ \ z \in \bC^{+},$
        which extends by continuity to the case when $t \in U$.
        The numerator of the fraction on the right-hand side of (\ref{eqn:47a}) thus takes 
        the form
        \begin{align*}
        \mathrm{Im} \bigl( \, (t - \sigma^2 \, G_{\mu \boxplus \nu} (t))
                                            \, G_{\mu \boxplus \nu} (t) \, \bigr)
        & = t \, \mathrm{Im} G_{\mu \boxplus \nu} (t)) 
           - 2 \sigma^2 \, \mathrm{Re} G_{\mu \boxplus \nu} (t) 
                  \cdot \mathrm{Im} G_{\mu \boxplus \nu} (t)    \\
        & = - \pi t f_{\mu \boxplus \nu} (t) - 2 \sigma^2 
                \cdot \pi \, H_{\mu \boxplus \nu} (t) 
                 \cdot \bigl( - \pi \, f_{\mu \boxplus \nu} (t) \bigr), 
        \end{align*}
         where $H_{\mu\boxplus\nu}$ is the Hilbert transform of $\mu\boxplus\nu$,
and where at the latter equality sign we took advantage of the fact that the 
continuous extension of $G_{\mu \boxplus \nu}$ at a point $t \in U$ can be written 
in the form 
$G_{\mu \boxplus \nu} (t) = \pi \bigl( \, H_{\mu \boxplus \nu} (t) 
- i f_{\mu \boxplus \nu} (t) \, \bigr).$
Upon plugging the above calculation back into Equation \eqref{eqn:47a} we find that
		\begin{equation}   \label{eqn:47x}
			h(t) = -\frac{-\pi t f_{\mu\boxplus\nu}(t)+2\pi^2\sigma^2 
                   f_{\mu\boxplus\nu}(t) H_{\mu\boxplus\nu}(t)}{\pi f_{\mu\boxplus\nu}(t)}
                = t - 2\pi \sigma^2H_{\mu\boxplus\nu}(t),
		\end{equation}
and this leads to the free additive analogue of Tweedie's formula that was announced 
in Equation (\ref{eqn:freetweedieboxplus}) of the Introduction,
		\begin{align*}
			\bE(a \, | \, a+b) = a+b - 2 \pi \sigma^2 H_{\mu\boxplus\nu}(a+b).
		\end{align*}
        As already noted in the Introduction, it is remarkable that, in this important special 
        case, the formula for $h$ uses solely the distribution of $a+b$, 
        and does not invoke the (unknown) distribution of the element $a$.
        
	For the record, we mention that in this special case the corresponding
    overlap function $\ovadd$ is given by the formula
		\[
        \ovadd(s,t) 
        = \frac{1}{(t-s-\pi \sigma^2H_{\mu\boxplus\nu}(t))^2
        +\pi^2f_{\mu\boxplus\nu}^2(t)}, \ \mbox{ for $s\in(-2,2)$ and $t\in U$.}
            \]
        The calculation leading to this formula is similar to the one showed for $h$ in 
        (\ref{eqn:47x}), and is left as exercise to the reader.

        \vspace{6pt}

        \noindent
	$2^o$ If the distributions of $a,b$ are $\mu$ and $\mu^{\boxplus r}$ respectively 
    for some Borel probability measure $\mu$ on $\mathbb{R}$ and some $r\geq 1$, then from \cite{BelinschiBercoviciAdditiveSemigroup} we know that 
    $G_{\mu^{\boxplus r+1}}(z)=G_{\mu}(\omega_{r+1}(z))$ with
	\[
           \omega_{r+1}(z)
                  =\frac{z}{r+1}+\left(1-\frac{1}{r+1}\right)
                  \, \frac{1}{G_{\mu^{\boxplus r+1}}(z)}.
        \]
	An immediate application of \eqref{eqn:47a} then gives the expected answer,
		\[
            \bE(a|a+b)=\frac{a+b}{r+1}.
            \]
	\end{example}

\vspace{0.5cm}
    
\section{Multiplicative case}
\label{MultiplicativeFormulas}	
	In this section we consider $a,b$ freely independent positive random variables with respective distributions $\mu, \nu$. Similarly as in the additive case, we first observe that considerations from Section 2.5 imply that the measures $k_s$ defined by
	\begin{align*}
		\int_{\bR}\frac{zt}{1-zt}dk_s(t)
		=\frac{\omega(z)s}{1-\omega(z)s}, \ \ z \in \bC^{+}.
	\end{align*}
	give us disintegration of the overlap measure $\muovmulp$ with respect to its first marginal $\mu$. As in the previous section we will show that for $\mu$-almost every $s$, the measure $k_s$ is absolutely continuous with respect to $\mu\boxtimes\nu$. The corresponding Radon-Nikodym derivative will allow us to
	calculate explicitly conditional expectations of the form 
	$\bE(f(a) \mid a^{1/2}ba^{1/2})$. We start with a 
	review of regularity of free multiplicative convolution
	on $\bR_+$. In this case we pay special attention to 
	the atoms at $0$, which behave differently.
	
	\vspace{6pt}
	
	\begin{review}[Review of regularity of free multiplicative convolution on $\bR_+$] \label{rev:51}
		Let $\mu,\nu$ be two Borel probability measures on $\bR_+$ neither of them a point mass; 
		denote by $\mu \boxtimes \nu$ the free multiplicative convolution).
		
		\vspace{6pt}
		
		\noindent
		$1^o$ The measure $\mu\boxtimes\nu$ has at most finitely many atoms 
		$\gamma_i,\, i=1,\ldots,n$. For every $\gamma_i>0$, $i=1,\ldots,n$ 
		there exist uniquely determined $\alpha_i,\beta_i\in (0, \infty)$ such that 
		$\alpha_i\beta_i=\gamma_i$ 
		and $\mu(\{\alpha_i\})+\nu(\{\beta_i\})>1$.
		Moreover, $\mu \boxtimes \nu (\{0\}) = \max \{ \mu( \{0\} ), \nu( \{0\}) \}$ 
		(see \cite{BelinschiAtomsMult}).
		
		\vspace{6pt}
		
		\noindent
		$2^o$ With notation as in $1^o$ we have $\lim_{z\to1/\gamma_i}\omega(z)=1/\alpha_i$, 
		for every $i=1,\ldots,n$ (see \cite{BelinschiAtomsMult}).
		
		\vspace{6pt}
		
		\noindent
		$3^o$ Measure $\mu\boxtimes\nu$ has no singular continuous part 
		(see \cite{JiRegularityMult}).
		
		\vspace{6pt}
		
		\noindent
		$4^o$ There exists a closed set $0\in E\subset \bR_+$ of Lebesgue measure 
		zero such that the density of $(\mu\boxtimes\nu)^{ac}$, denoted by 
		$f_{\mu\boxtimes\nu}$, is analytic on 
		$\bR_{+}\setminus E$ (see \cite{BelinschiThesis,Belinschi}). 
		
		\vspace{6pt}
		
		\noindent
		$5^o$ With notation from $4^o$ the subordination function $\omega$ can be 
		continued continuously to the real line, and the continuation, still denoted
		by $\omega$, has strictly positive imaginary part at every point $x\in\bR$
		such that $1/x\in \mathrm{supp}(\mu\boxtimes\nu)^{ac}\setminus E$ 
		(see \cite{BelinschiThesis,Belinschi}).
	\end{review}
	
	\begin{proof}
		Similarly as before, only $5^o$ requires an explanation. We know that $\omega_1(x)\omega_2(x)=x\eta_{\mu\boxtimes\nu}(x)$, hence we have 
		$\frac{\omega_1(x)}{x}=\frac{\eta_{\nu}(\omega_2(x))}{\omega_2(x)}$, if we know that $\omega_2(x)$ has positive imaginary part, then $\frac{\eta_{\nu}(\omega_2(x))}{\omega_2(x)}$ has positive imaginary part, as $\eta_\mu$ strictly increases argument of $\omega_2(x)$ (see \cite{BelinschiBercoviciMultSemigroup}). 
		Thus $\omega_1(x)$ has strictly positive imaginary part.
	\end{proof}
	
	\vspace{10pt}
	
	In the multiplicative case the behaviour of subordination functions at zero requires 
	a bit more attention. It is explained in detail in \cite{JiRegularityMult}, let us discuss
	some elementary results here.
	
	\vspace{6pt}
	
	\begin{lemma}\label{lem:52}
		In the framework and notation of Review \ref{rev:51} we have
		\begin{align*}
			\lim_{z\to-\infty} \omega(z)=
			\begin{cases}
				-\infty \qquad &\mbox{ if } \mu(\{0\})\geq \nu(\{0\}),\\
				\psi_{\mu}^{-1}(\nu(\{0\})-1) &\mbox{ if } \mu(\{0\})< \nu(\{0\}).
			\end{cases}     
		\end{align*}
	\end{lemma}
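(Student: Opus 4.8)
The plan is to reduce the statement to the behaviour of the moment transforms on the negative half-line $(-\infty,0)$, where all the functions involved are real-analytic because this half-line is disjoint from every relevant support (all of which lie in $\mathbb{R}_+$).

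First I would record the elementary properties of $\psi_\mu$ restricted to $(-\infty,0)$. Writing $\frac{zt}{1-zt}=-\frac{|z|t}{1+|z|t}$ for $z<0$ and $t\ge 0$, one sees that the integrand is bounded in modulus by $1$ and has derivative $t/(1-zt)^2\ge 0$ in $z$; hence $\psi_\mu$ is continuous and strictly increasing on $(-\infty,0)$ (strictly, since $\mu$ is not $\delta_0$ and therefore charges $(0,\infty)$). By dominated convergence the endpoint limits are
\[
\lim_{z\to 0^-}\psi_\mu(z)=0,\qquad \lim_{z\to-\infty}\psi_\mu(z)=\int_{(0,\infty)}(-1)\,d\mu=\mu(\{0\})-1,
\]
so $\psi_\mu$ is a homeomorphism of $(-\infty,0)$ onto $(\mu(\{0\})-1,0)$, whose inverse $\psi_\mu^{-1}$ is continuous on $(\mu(\{0\})-1,0)$ and satisfies $\psi_\mu^{-1}(y)\to-\infty$ as $y\downarrow \mu(\{0\})-1$. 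Applying the same computation to $\mu\boxtimes\nu$ and using the value $(\mu\boxtimes\nu)(\{0\})=\max\{\mu(\{0\}),\nu(\{0\})\}$ from Review \ref{rev:51}.$1^o$ yields
\[
\lim_{z\to-\infty}\psi_{\mu\boxtimes\nu}(z)=(\mu\boxtimes\nu)(\{0\})-1=\max\{\mu(\{0\}),\nu(\{0\})\}-1.
\]

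The key structural input I would then invoke is that the subordination function $\omega$ continues to a real-valued increasing function on $(-\infty,0)$ on which the defining relation $\psi_\mu(\omega(z))=\psi_{\mu\boxtimes\nu}(z)$ persists; this is the negative-axis counterpart of the continuation quoted in Review \ref{rev:51}.$5^o$ and rests on the regularity theory for free multiplicative convolution (the references \cite{Biane, BelinschiThesis, Belinschi, JiRegularityMult}), the point being that on $(-\infty,0)$ every transform is real-analytic. Since $\psi_\mu$ is injective there, this forces
\[
\omega(z)=\psi_\mu^{-1}\bigl(\psi_{\mu\boxtimes\nu}(z)\bigr),\qquad z\in(-\infty,0),
\]
which is legitimate because $\psi_{\mu\boxtimes\nu}(z)$ stays in the range $(\mu(\{0\})-1,0)$ of $\psi_\mu$: indeed $\psi_{\mu\boxtimes\nu}(z)>(\mu\boxtimes\nu)(\{0\})-1\ge \mu(\{0\})-1$. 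I expect this continuation statement to be the main obstacle, and the one place where I would lean on the cited regularity results rather than reprove it from scratch.

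It then remains to pass to the limit $z\to-\infty$ in the displayed formula for $\omega$ and to split into the two cases. If $\mu(\{0\})\ge\nu(\{0\})$, the limit of $\psi_{\mu\boxtimes\nu}(z)$ equals $\mu(\{0\})-1$, the left endpoint of the range of $\psi_\mu$, so $\psi_\mu^{-1}$ of it tends to $-\infty$ and hence $\omega(z)\to-\infty$. If instead $\mu(\{0\})<\nu(\{0\})$, then $\nu(\{0\})-1$ is an interior point of $(\mu(\{0\})-1,0)$ (here $\nu(\{0\})<1$ because $\nu$ is not a point mass), so continuity of $\psi_\mu^{-1}$ gives $\omega(z)\to\psi_\mu^{-1}(\nu(\{0\})-1)$. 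These are exactly the two branches asserted in the lemma.
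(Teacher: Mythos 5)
Your proposal is correct and follows essentially the same route as the paper's proof: establish that $\psi_\mu$ is a homeomorphism from $(-\infty,0)$ onto $(\mu(\{0\})-1,0)$, identify $\omega = \psi_\mu^{-1}\circ\psi_{\mu\boxtimes\nu}$ on the negative half-line, compute $\lim_{z\to-\infty}\psi_{\mu\boxtimes\nu}(z)=\max\{\mu(\{0\}),\nu(\{0\})\}-1$ by dominated convergence, and split into the two cases according to whether this limit is the endpoint or an interior point of the range of $\psi_\mu$. The only difference is one of emphasis: you flag the identification of $\omega$ with $\psi_\mu^{-1}\circ\psi_{\mu\boxtimes\nu}$ on $(-\infty,0)$ as the step requiring the cited regularity theory, whereas the paper simply observes that this composition is well defined there and fills in fewer of the elementary monotonicity details that you spell out.
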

	
	\begin{proof}
		For any probability measure $\mu\neq \delta_0$ on $\bR_+$ the function $\psi_\mu:(-\infty,0)\mapsto (\mu(\{0\})-1,0)$ is a homeomorphism. Therefore $\psi_\mu$ has a continuous inverse $\psi_\mu^{-1}:(\mu(\{0\})-1,0)\mapsto (-\infty,0)$. We observe that for  $z\in(-\infty,0)$ function $\omega(z)=\psi_\mu^{-1}(\psi_{\mu\boxtimes\nu}(z))$ is well defined. 
		
		From Lebesgue dominated convergence it follows that 
		\[
		\lim_{z\to-\infty} \psi_{\mu\boxtimes\nu}(z)=\mu\boxtimes\nu(\{0\})-1=\max\{\mu(\{0\}),\nu(\{0\})\}-1.
		\]
		
		If $\mu\boxtimes\nu(\{0\})=\mu(\{0\})$ then  $\lim_{z\to-\infty}\psi_\mu^{-1}(\psi_{\mu\boxtimes\nu}(z))=-\infty$. In the case $\mu\boxtimes\nu(\{0\})=\nu(\{0\})>\mu(\{0\})$ we have $\lim_{z\to-\infty}\psi_\mu^{-1}(\psi_{\mu\boxtimes\nu}(z))=\psi_{\mu}^{-1}(\nu(\{0\})-1)$, in this case of course $\nu(\{0\})-1\in(\mu(\{0\})-1,0)$, hence $\psi_{\mu}^{-1}(\nu(\{0\})-1)$ is well defined.
	\end{proof}
	
	\vspace{10pt}
	
	The next proposition states the precise formula that one has for the overlap function in this case.
	The proposition does not refer to some freely independent positive elements $a,b$ in an ncps -- but 
	if such $a,b$ were in the picture, then the function $o$ defined in (\ref{eqn:53a}) would be the overlap function as
	$o_{a, a^{1/2} b a^{1/2}}$.
	
	\vspace{6pt}
	
	\begin{proposition}\label{prop:53}
		With notation from Review \ref{rev:51} above, consider the function 
		
		\noindent
		$o : \mathrm{supp}(\mu)\times \mathrm{supp}(\mu\boxtimes \nu)\to \bR_+$ 
		defined (in $\mu\times \mu\boxtimes \nu$-almost everywhere sense)
		by the following formula:
		\begin{equation}   \label{eqn:53a}
			o(s,t) :=
			\begin{cases}
				-\frac{1}{\pi} \frac{1}{tf_{\mu\boxtimes\nu}(t)}
				\operatorname{Im}\left(\frac{1}{1-\omega(1/t)s}\right)\qquad 
				&\mbox{ if } s\in\mathrm{supp}(\mu)\\ 
				&\mbox{ and } t\in \mathrm{supp}(\mu\boxtimes\nu)^{ac}\setminus E,\\
				\frac{1}{\mu(\{s\})} 1_{\omega(1/t)=1/s}\qquad 
				&\mbox{ if } s\in\mathrm{supp}(\mu)\\ 
				&\mbox{ and $t>0$ is an atom of $\mu\boxtimes\nu$},\\
				\frac{1}{\nu(\{0\})}\frac{1}{1-s \psi_\mu^{-1}(\nu(\{0\})-1)}1_{\nu(\{0\})>\mu(\{0\})} \qquad 
				&\mbox{ if } s\in\mathrm{supp}(\mu)\setminus\{0\}, t=0,\\
				\frac{1}{\mu\boxtimes\nu(\{0\})}1_{\mu\boxtimes\nu(\{0\})>0} \qquad 
				&\mbox{ if } s=t=0,
			\end{cases}
		\end{equation}
		Consider the measure $k_s$ from \eqref{eqn:25a}. 
		Then: for $\mu$-almost every $s \in \bR_+$ we have $k_s\ll \mu\boxtimes\nu$ and the Radon-Nikodym 
		derivative is given by the function $o(s, \cdot )$ (sending $t \mapsto o(s,t)$).
	\end{proposition}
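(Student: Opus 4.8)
The plan is to follow the template of Proposition \ref{prop:44}, substituting moment transforms for Cauchy transforms where the multiplicative structure requires it, and treating the atom at $0$ by a separate argument. The first step is to convert the defining relation \eqref{eqn:25a} for $k_s$ into a statement about its Cauchy transform. Using the elementary identity $G_\rho(1/z) = z\,(1+\psi_\rho(z))$, valid for any probability measure $\rho$ on $\bR_+$, together with $1+\psi_{k_s}(z) = 1/(1-\omega(z)s)$, relation \eqref{eqn:25a} rewrites as
\[
G_{k_s}(w) = \frac{1}{w\,(1 - \omega(1/w)\,s)}, \quad w \in \bC^{+}.
\]
Next I would invoke Review \ref{rev:51} to record the structure of $\mu\boxtimes\nu$: a finite atomic set $\Gamma$ (possibly containing $0$), no singular continuous part, and an absolutely continuous part whose density $f_{\mu\boxtimes\nu}$ is positive and analytic off the closed Lebesgue-null set $E$. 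Taking $V = \Gamma \cup \{\, t \in \bR_+\setminus E : f_{\mu\boxtimes\nu}(t) > 0 \,\}$ one has $(\mu\boxtimes\nu)(V) = 1$, so by Remark \ref{rem:Z4} (applied to $V$ as in the proof of Proposition \ref{prop:44}) there is a $\mu$-null set outside which $k_s(V) = 1$. I then fix such an $s$ and analyse $k_s$ on the pieces of $V$.

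On the absolutely continuous region $\mathrm{supp}(\mu\boxtimes\nu)^{ac}\setminus E$, Review \ref{rev:51}.$5^o$ gives $\operatorname{Im}\omega(1/t) > 0$, so the displayed formula for $G_{k_s}$ has a finite boundary value there; Lemma \ref{lem:22}.$1^o$ then shows $k_s$ carries no singular mass on this region, while Lemma \ref{lem:22}.$3^o$ identifies its density as $-\tfrac1\pi\operatorname{Im}G_{k_s}(t)$. Since $t$ is real the factor $1/t$ pulls out, and dividing by $f_{\mu\boxtimes\nu}(t)$ produces exactly the first branch of \eqref{eqn:53a}. For an atom $\gamma > 0$, I would combine the subordination identity $\psi_{\mu\boxtimes\nu}(\zeta) = \psi_\mu(\omega(\zeta))$ with the same $G$-versus-$\psi$ identity to obtain, writing $\eta = \omega(1/w)$,
\[
(w-\gamma)\,G_{k_s}(w) = \frac{(w-\gamma)\,G_{\mu\boxtimes\nu}(w)}{\bigl(1/\eta - s\bigr)\,G_\mu\bigl(1/\eta\bigr)},
\]
and then let $w \to \gamma$ nontangentially. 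By Lemma \ref{lem:22}.$2^o$ the numerator tends to $\mu\boxtimes\nu(\{\gamma\})$; using Review \ref{rev:51}.$2^o$ (and the nontangential convergence $1/\omega(1/w) \to 1/\omega(1/\gamma)$ supplied by \cite{BelinschiAtomsMult}) the denominator tends to $\mu(\{s\})$ precisely when $\omega(1/\gamma) = 1/s$, and to a nonzero limit otherwise, giving $k_s(\{\gamma\}) = \mu\boxtimes\nu(\{\gamma\})\,\mu(\{s\})^{-1}\,1_{\omega(1/\gamma)=1/s}$. Dividing by $\mu\boxtimes\nu(\{\gamma\})$ matches the second branch.

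The atom at $0$ is the genuinely new point and the main obstacle, since Stieltjes inversion of $G_{k_s}$ at the endpoint $0$ is delicate; here I would instead argue directly with the moment transform, exactly as in the proof of Lemma \ref{lem:52}. For any probability measure $\rho$ on $\bR_+$ one has $\rho(\{0\}) = 1 + \lim_{z\to-\infty}\psi_\rho(z)$, so
\[
k_s(\{0\}) = 1 + \lim_{z\to-\infty}\frac{\omega(z)\,s}{1 - \omega(z)\,s}.
\]
Feeding in the two regimes of Lemma \ref{lem:52}: when $\mu(\{0\}) \geq \nu(\{0\})$ one has $\omega(z)\to-\infty$, whence $k_s(\{0\}) = 0$ for $s > 0$; when $\mu(\{0\}) < \nu(\{0\})$ one has $\omega(z)\to c := \psi_\mu^{-1}(\nu(\{0\})-1)$, whence $k_s(\{0\}) = 1/(1-cs)$. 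Since $\mu\boxtimes\nu(\{0\}) = \max\{\mu(\{0\}),\nu(\{0\})\}$ by Review \ref{rev:51}.$1^o$, dividing $k_s(\{0\})$ by this number reproduces the third branch of \eqref{eqn:53a} for $s>0$. Finally, for $s = 0$ the relation \eqref{eqn:25a} forces $\psi_{k_0}\equiv 0$, i.e.\ $k_0 = \delta_0$, which yields the fourth branch $o(0,0) = 1/\mu\boxtimes\nu(\{0\})$ and is consistent with the vanishing of the other branches at $s=0$.

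Collecting the three pieces shows that the singular part of $k_s$ is purely atomic and supported on $\Gamma$, while its absolutely continuous part has the density computed above; since the atoms of $k_s$ lie inside $\Gamma$ and its density is supported where $f_{\mu\boxtimes\nu}>0$, this gives $k_s \ll \mu\boxtimes\nu$ with Radon--Nikodym derivative $o(s,\cdot)$, as claimed. Beyond the endpoint analysis, the only delicate points are the existence of the relevant boundary and nontangential limits and the legitimacy of taking them inside the identities above; these are precisely the regularity inputs isolated in Review \ref{rev:51} and Lemma \ref{lem:52}, so no further analytic difficulty arises once the $0$-atom is handled separately.
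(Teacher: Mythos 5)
Your proposal is correct and follows essentially the same route as the paper's own proof: the same expression $G_{k_s}(w)=\frac{1}{w\,(1-\omega(1/w)s)}$ with Stieltjes inversion on $\mathrm{supp}(\mu\boxtimes\nu)^{ac}\setminus E$, the nontangential-limit argument (analogous to Proposition \ref{prop:44}) for positive atoms, the moment-transform limit $k_s(\{0\})=1+\lim_{z\to-\infty}\psi_{k_s}(z)$ combined with Lemma \ref{lem:52} for the atom at zero, and $k_0=\delta_0$ for $s=0$. You merely spell out the steps the paper compresses into ``analogously'', so no substantive difference arises.
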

	
	\begin{proof}
		The proof is very similar to the proof of Proposition \ref{prop:44}, with the set $U$ being 
		replaced by $\mathrm{supp}(\mu\boxtimes\nu)^{ac}\setminus E$. 
		
		After writing 
		\[G_{k_s}(z)=\frac{1}{z}\frac{1}{1-\omega(1/z)s},\]
		we apply Stieltjes inversion to get the formula in the case $t\in\mathrm{supp}(\mu\boxtimes\nu)^{ac}\setminus E$. Atoms different than $0$ are also treated analogously.
		
		Let us calculate the size the atoms at $0$ of $k_s$. Of course for any measures $\mu,\nu$ we have $k_0=\delta_0$, which proves the last point in definition of $\ovmulp$. 
		
		Consider $s>0$ we have $\lim_{z\to\-\infty}\psi_{k_s}(z)=k_s(\{0\})-1$, on the other hand
		\begin{align*}
			\lim_{z\to-\infty}\psi_{k_s}(z)=\lim_{z\to-\infty} \frac{\omega(z)s}{1-\omega(z)s}=\begin{cases}
				-1\qquad &\mbox{ if } \mu(\{0\})\geq\nu(\{0\})\\
				\frac{s\psi_\mu^{-1}(\nu(\{0\})-1)}{1-s\psi_\mu^{-1}(\nu(\{0\})-1)}&\mbox{ if } \mu(\{0\})<\nu(\{0\}).
			\end{cases}
		\end{align*}
		Thus if $\mu(\{0\})<\nu(\{0\})$, then for every $s>0$ we have $k_s(\{0\})=\frac{s\psi_\mu^{-1}(\nu(\{0\})-1)}{1-s\psi_\mu^{-1}(\nu(\{0\})-1)}+1=\frac{1}{1-s\psi_\mu^{-1}(\nu(\{0\})-1)}$, which justifies second line of the definition of $\ovmulp$.
	\end{proof}
	
	\begin{theorem}\label{thm:54}
		Let $a,b$ be freely independent positive random variables in an ncps $( \cA, \varphi )$. Let $\mu$ and respectively $\nu$ be the distributions of $a$ and $b$ with respect to $\varphi$, and assume that neither of $\mu, \nu$ is a point mass. Then
		$d\muovmulp(s,t)={\ovmulp(s,t)} \mu(ds)\,\mu\boxtimes\nu(dt)$. In particular $\muovmulp\ll \mu\times \mu\boxtimes\nu$ and $\ovmulp$ is the overlap function of $a$ and $a^{1/2}ba^{1/2}$.
	\end{theorem}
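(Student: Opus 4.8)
The plan is to follow verbatim the template of the proof of Theorem \ref{thm:45} in the additive case. First I would introduce the candidate measure $\sigma$ on $\Spec(a)\times\Spec(a^{1/2}ba^{1/2})$ defined by $d\sigma(s,t)=\ovmulp(s,t)\,\mu(ds)\,\mu\boxtimes\nu(dt)$, using the function $o=\ovmulp$ supplied by Proposition \ref{prop:53}. The goal is then to verify that $\sigma$ satisfies the characterizing property (\ref{eqn:Y1a}) of the overlap measure, namely that $\int_{\bR^2}f(s)g(t)\,d\sigma(s,t)=\varphi\bigl(f(a)g(a^{1/2}ba^{1/2})\bigr)$ for all $f,g\in\Bor_b(\bR)$. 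Once this is checked, the uniqueness clause of Proposition and Definition \ref{prop:Y1} forces $\sigma=\muovmulp$, and every assertion of the theorem (the displayed factorization, the absolute continuity $\muovmulp\ll\mu\times\mu\boxtimes\nu$, and the identification of $\ovmulp$ as the overlap function) follows at once.

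For the verification I would lean on two ingredients already in place. The first is the disintegration of $\muovmulp$ with respect to its first marginal $\mu$ recorded at the opening of this section, which reads $\varphi\bigl(f(a)g(a^{1/2}ba^{1/2})\bigr)=\int_{\bR}f(s)\bigl[\int_{\bR}g(t)\,dk_s(t)\bigr]\,d\mu(s)$, with the measures $k_s$ of (\ref{eqn:25a}); this is the exact multiplicative counterpart of Remark \ref{rem:41}. The second is Proposition \ref{prop:53} itself, which tells us that for $\mu$-almost every $s$ one has $k_s\ll\mu\boxtimes\nu$ with $dk_s/d(\mu\boxtimes\nu)=o(s,\cdot)$, so that $\int_{\bR}g(t)\,dk_s(t)=\int_{\bR}g(t)\,o(s,t)\,d(\mu\boxtimes\nu)(t)$ for $\mu$-almost every $s$.

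Combining these, a single application of Fubini's theorem yields
\[
\int_{\bR^2}f(s)g(t)\,d\sigma(s,t)
=\int_{\bR}f(s)\Bigl[\int_{\bR}g(t)\,o(s,t)\,d(\mu\boxtimes\nu)(t)\Bigr]d\mu(s)
=\int_{\bR}f(s)\Bigl[\int_{\bR}g(t)\,dk_s(t)\Bigr]d\mu(s)
=\varphi\bigl(f(a)g(a^{1/2}ba^{1/2})\bigr),
\]
which is precisely (\ref{eqn:Y1a}) for $\sigma$. The only point requiring a moment of care is that the identity $\int g\,dk_s=\int g\,o(s,\cdot)\,d(\mu\boxtimes\nu)$ holds merely for $\mu$-almost every $s$; but since the outer integration is against $d\mu(s)$, the exceptional null set of $s$ does not affect the value, and Fubini applies without difficulty once one notes that $o$ is nonnegative, so integrability reduces to the finiteness of $\varphi\bigl(|f|(a)|g|(a^{1/2}ba^{1/2})\bigr)$.

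I do not expect a genuine obstacle here, because all the analytic substance of the result has already been absorbed into Proposition \ref{prop:53} — namely the regularity facts of Review \ref{rev:51}, the Stieltjes inversion of Lemma \ref{lem:22}, and the delicate treatment of the atom at $0$ via Lemma \ref{lem:52}, which together pin down the Radon--Nikodym derivative $o(s,\cdot)$ of $k_s$. Granting that proposition, the present theorem is essentially a bookkeeping step, and the hard part is entirely upstream in the construction of $o$ rather than in the packaging carried out here.
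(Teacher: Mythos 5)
Your proposal is correct and is exactly the route the paper takes: the paper's own ``proof'' of Theorem \ref{thm:54} is literally the remark that it is very similar to that of Theorem \ref{thm:45}, i.e.\ define the candidate measure $d\sigma(s,t)=\ovmulp(s,t)\,\mu(ds)\,\mu\boxtimes\nu(dt)$, combine the $k_s$-disintegration from the start of Section \ref{MultiplicativeFormulas} with Proposition \ref{prop:53} and Fubini to verify property (\ref{eqn:Y1a}), and invoke the uniqueness clause of Proposition and Definition \ref{prop:Y1}. Your handling of the $\mu$-null exceptional set of $s$ and of nonnegativity of $o$ for Fubini matches the additive template, so there is nothing to add.
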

	
	The proof of this theorem is very similar to the one of Theorem \ref{thm:45}.
	
	\begin{corollary}\label{cor:55}
		With the notation of Theorem \ref{thm:54}, for every $f\in\mathrm{Bor}_b (\Spec(a))$ we have
		\begin{align*}
			\bE[ f(a) \mid a^{1/2}ba^{1/2} ] = h(a^{1/2}ba^{1/2}),
		\end{align*}
		where $h:\Spec(a^{1/2}ba^{1/2})\to \mathbb{C}$ is defined (in $\mu\boxtimes\nu$-almost everywhere sense) by
		\begin{align*}
			h(t)=\begin{cases}
				-\frac{1}{\pi t f_{\mu\boxtimes\nu}(t)}\int_{\bR_+}f(s)\operatorname{Im}\left(\frac{1}{1-\omega(1/t)s}\right)d\mu(s)\qquad &\mbox{ if } t\in \mathrm{supp}(\mu\boxtimes\nu)^{ac}\setminus E,\\
				f\left(\frac{1}{\omega(1/t)}\right)&\mbox{ if } t >0 \mbox{ is an atom of } \mu\boxtimes\nu ,\\
				\frac{1}{\nu(\{0\})}\int_{\bR_+}\frac{f(s)}{1-s \psi_\mu^{-1}(\nu(\{0\})-1)}d\mu(s) &\mbox{ if } t=0\mbox{ and } \mu(\{0\})<\nu(\{0\}),\\
				f(0) &\mbox{ if } t=0\mbox{ and } \mu(\{0\})\geq\nu(\{0\}).
			\end{cases}
		\end{align*}
	\end{corollary}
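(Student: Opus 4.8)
The plan is to mirror the proof of Corollary~\ref{cor:46} from the additive case, now using Theorem~\ref{thm:54} in place of Theorem~\ref{thm:45}. First I would fix $g \in \mathrm{Bor}_b(\Spec(a^{1/2}ba^{1/2}))$ and combine Theorem~\ref{thm:54} with Fubini's theorem (legitimate since $\ovmulp \geq 0$ and $f,g$ are bounded, so the integrand is $\muovmulp$-integrable) to write
\[
\varphi \bigl( f(a) \, g(a^{1/2}ba^{1/2}) \bigr)
= \int_{\bR_+} \Bigl( \int_{\bR_+} f(s) \, \ovmulp(s,t) \, \mu(ds) \Bigr) \, g(t) \, \mu\boxtimes\nu(dt).
\]
Setting $h(t) := \int_{\bR_+} f(s) \, \ovmulp(s,t) \, \mu(ds)$, the right-hand side equals $\varphi \bigl( h(a^{1/2}ba^{1/2}) \, g(a^{1/2}ba^{1/2}) \bigr)$, so the defining relation~\eqref{eqn:Z2c} for the conditional expectation yields $\bE[ f(a) \mid a^{1/2}ba^{1/2} ] = h(a^{1/2}ba^{1/2})$. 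Equivalently, this is Proposition~\ref{prop:Z3} applied with $x=a$ and $y=a^{1/2}ba^{1/2}$, where by Theorem~\ref{thm:54} the second disintegration $\kt$ has density $\ovmulp(\cdot,t)$ against $\mu$. The remaining task is purely computational: evaluate this integral in each of the four regimes by substituting the explicit formula~\eqref{eqn:53a} for $\ovmulp$.

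The continuous and positive-atom cases are routine. For $t \in \mathrm{supp}(\mu\boxtimes\nu)^{ac} \setminus E$, substituting the first branch of~\eqref{eqn:53a} and pulling the $t$-dependent prefactor outside the $s$-integral produces exactly the first branch of the stated formula for $h$. For $t > 0$ an atom of $\mu\boxtimes\nu$, the overlap $\ovmulp(s,t) = \mu(\{s\})^{-1} 1_{\omega(1/t)=1/s}$ is, as a function of $s$, concentrated at the single point $s = 1/\omega(1/t)$; by Review~\ref{rev:51}.$2^o$ this point equals the atom $\alpha_i$ of $\mu$ associated to the atom $t = \gamma_i$, so the integral collapses to $f(\alpha_i) = f(1/\omega(1/t))$, matching the second branch.

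The delicate part, and the step I expect to require the most care, is the atom at $t = 0$, where the behaviour splits according to whether $\mu(\{0\}) \geq \nu(\{0\})$ or $\mu(\{0\}) < \nu(\{0\})$. When $\mu(\{0\}) \geq \nu(\{0\})$, the indicator in the third branch of~\eqref{eqn:53a} vanishes, so $\ovmulp(s,0) = 0$ for $s > 0$ and only the fourth branch at $s=0$ contributes; since $\mu\boxtimes\nu(\{0\}) = \mu(\{0\})$ by Review~\ref{rev:51}.$1^o$, the integral reduces to $f(0) \cdot \mu(\{0\})^{-1} \cdot \mu(\{0\}) = f(0)$, giving the fourth branch. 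When $\mu(\{0\}) < \nu(\{0\})$, the crucial observation is that the third branch, evaluated at $s=0$, gives $\nu(\{0\})^{-1}$, which coincides with the fourth branch because $\mu\boxtimes\nu(\{0\}) = \nu(\{0\})$ here; the two branches therefore glue into the single expression $\ovmulp(s,0) = \nu(\{0\})^{-1} (1 - s \, \psi_\mu^{-1}(\nu(\{0\})-1))^{-1}$ valid for all $s \geq 0$, and integrating against $f \, d\mu$ yields the third branch. Throughout one keeps in mind that $h$ need only be specified $\mu\boxtimes\nu$-almost everywhere, which is all that is required to pin down the conditional expectation.
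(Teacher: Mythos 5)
Your proposal is correct and takes essentially the same approach as the paper: the paper's proof of Corollary \ref{cor:55} simply states that it is identical to the additive case (Corollary \ref{cor:46}), which is precisely the Fubini/disintegration argument you give, followed by substitution of the explicit overlap formula \eqref{eqn:53a}. Your detailed verification of the four branches (in particular the gluing of the two $t=0$ sub-cases and the $\mu\boxtimes\nu$-a.e.\ caveat) is sound and merely spells out what the paper leaves implicit.
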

	
	Here again, the proof is exactly the same as in the additive case.
	
	We will calculate the conditional expectation in the case $f(s)=s$ and then we will further specialize 
	$b$ to be a free Poisson element, in which case we will retrieve the well-known Ledoit-Péché estimator 
	of covariance matrix from \cite{LedoitPeche}. The calculations are non-trivial only in two cases: 
	$t=0$ when $\mu(\{0\})<\nu(\{0\})$ and when $t\in\mathrm{supp}(\mu\boxtimes\nu)^{ac}\setminus E$.
	
	\vspace{6pt}
	
	\begin{proposition}
		With the notation of Theorem \ref{thm:54}, one has $\bE(a|a^{1/2}ba^{1/2})=h(a^{1/2}ba^{1/2})$ where $h:\Spec(a^{1/2}ba^{1/2})\to \mathbb{C}$ is defined (in $\mu\boxtimes\nu$-almost everywhere sense) by
		\begin{align}\label{eq:prop56}
			h(t)=\begin{cases}
				-\frac{1}{\pi t f_{\mu\boxtimes\nu}(t)}\operatorname{Im}\left(\frac{1}{\omega(1/t)}\psi_{\mu\boxtimes\nu}\left(\frac{1}{t}\right)\right) \qquad&\mbox { if } t\in\mathrm{supp}(\mu\boxtimes\nu)^{ac}\setminus E,\\
				\frac{1}{\omega(1/t)} \qquad&\mbox { if $t>0$ is an atom of $\mu\boxtimes\nu$},\\
				\frac{\nu(\{0\})-1}{\nu(\{0\})\psi_\mu^{-1}(\nu(\{0\})-1)} \qquad &\mbox{ if $t=0$ and $\mu(\{0\})<\nu(\{0\})$},\\
				0 \qquad &\mbox{ if $t=0$ and $\mu(\{0\})\geq \nu(\{0\})$}.
			\end{cases}
		\end{align}
	\end{proposition}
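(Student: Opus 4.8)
The plan is to derive \eqref{eq:prop56} directly from the general formula for $h$ obtained in Corollary \ref{cor:55} by setting $f(s)=s$ and then evaluating the integral appearing in each branch. Two of the four branches require no work: when $t>0$ is an atom of $\mu\boxtimes\nu$ the general formula gives $f(1/\omega(1/t))=1/\omega(1/t)$, and when $t=0$ with $\mu(\{0\})\geq\nu(\{0\})$ it gives $f(0)=0$. These reproduce the second and fourth lines of \eqref{eq:prop56}, so all the content lies in the two integral branches, exactly as flagged before the statement.

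For the absolutely continuous branch $t\in\mathrm{supp}(\mu\boxtimes\nu)^{ac}\setminus E$, I would first note that $\omega(1/t)\in\bC^{+}$ by Review \ref{rev:51}.$5^{o}$, so that the map $s\mapsto s/(1-\omega(1/t)s)$ is bounded on $\bR_{+}$ and hence $\mu$-integrable; this justifies moving the imaginary part outside the integral,
\[
\int_{\bR_{+}} s\,\operatorname{Im}\!\left(\frac{1}{1-\omega(1/t)s}\right)d\mu(s)
= \operatorname{Im}\!\left(\int_{\bR_{+}}\frac{s}{1-\omega(1/t)s}\,d\mu(s)\right).
\]
Writing $\frac{s}{1-ws}=\frac{1}{w}\cdot\frac{ws}{1-ws}$ with $w=\omega(1/t)$ identifies the remaining integral as $\frac{1}{\omega(1/t)}\psi_{\mu}(\omega(1/t))$, and the multiplicative subordination relation $\psi_{\mu\boxtimes\nu}(z)=\psi_{\mu}(\omega(z))$ from Section \ref{SubordReview} turns this into $\frac{1}{\omega(1/t)}\psi_{\mu\boxtimes\nu}(1/t)$. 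Reinstating the prefactor $-\tfrac{1}{\pi t f_{\mu\boxtimes\nu}(t)}$ gives the first line of \eqref{eq:prop56}; this is the multiplicative counterpart of the computation in Proposition \ref{prop:47}.

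The branch $t=0$ with $\mu(\{0\})<\nu(\{0\})$ is handled by the same algebraic identity, now with the real number $w=\psi_{\mu}^{-1}(\nu(\{0\})-1)$ supplied by Lemma \ref{lem:52}. Since $\psi_{\mu}(w)=\nu(\{0\})-1$ by the very definition of $\psi_{\mu}^{-1}$, one gets $\int_{\bR_{+}}\frac{s}{1-ws}\,d\mu(s)=\frac{\nu(\{0\})-1}{w}$, and dividing by $\nu(\{0\})$ yields the third line of \eqref{eq:prop56}. No step presents a real difficulty; the only point deserving care is the interchange of $\operatorname{Im}$ and the integral in the absolutely continuous branch, which the boundedness of the integrand settles, together with the fact that $\psi_{\mu}(\omega(1/t))$ is a genuine interior value of $\psi_{\mu}$ and is therefore finite because $\omega(1/t)\in\bC^{+}$.
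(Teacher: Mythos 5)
Your proposal is correct and follows essentially the same route as the paper's own proof: specialize Corollary \ref{cor:55} to $f(s)=s$, then in the absolutely continuous branch factor $\frac{s}{1-ws}=\frac{1}{w}\cdot\frac{ws}{1-ws}$ to recognize $\frac{1}{\omega(1/t)}\psi_{\mu}(\omega(1/t))=\frac{1}{\omega(1/t)}\psi_{\mu\boxtimes\nu}(1/t)$, and in the $t=0$ branch use the same identity with $w=\psi_{\mu}^{-1}(\nu(\{0\})-1)$ so that $\psi_{\mu}(w)=\nu(\{0\})-1$. The only differences are cosmetic: you additionally spell out the two trivial branches and the justification for interchanging $\operatorname{Im}$ with the integral, which the paper leaves implicit.
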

	
	\begin{proof}
		Indeed in the case $t=0$ we get  
		\begin{align*}
			h(0)&=\frac{1}{\nu(\{0\})}\int_{\bR_+}\frac{s}{1-s \psi_\mu^{-1}(\nu(\{0\})-1)}d\mu(s)
			\\&=\frac{1}{\nu(\{0\})\psi_\mu^{-1}(\nu(0)-1)}\int_{\bR_+}\frac{s\psi_\mu^{-1}(\nu(\{0\})-1)}{1-s \psi_\mu^{-1}(\nu(\{0\})-1)}d\mu(s)\\
			&=\frac{\psi_\mu\left(\psi_\mu^{-1}(\nu(\{0\})-1)\right)}{\nu(\{0\})\psi_\mu^{-1}(\nu(\{0\})-1)}=\frac{\nu(0)-1}{\nu(\{0\})\psi_\mu^{-1}(\nu(\{0\})-1)}.
		\end{align*} 
		In the case $t\in\mathrm{supp}(\mu\boxtimes\nu)^{ac}\setminus E$ we get
		\begin{align*}
			h(t)&=-\frac{1}{\pi t f_{\mu\boxtimes\nu}(t)}\operatorname{Im}\left(\int_{\bR_+}\frac{s}{1-\omega(1/t)s}\right)d\mu(s)\\
			&=-\frac{1}{\pi t f_{\mu\boxtimes\nu}(t)}\operatorname{Im}\left(\frac{1}{\omega(1/t)}\int_{\bR_+}\frac{\omega(1/t)s}{1-\omega(1/t)s}\right)d\mu(s)\\
			&=-\frac{1}{\pi t f_{\mu\boxtimes\nu}(t)}\operatorname{Im}\left(\frac{1}{\omega(1/t)}\psi_{\mu\boxtimes\nu}\left(\frac{1}{t}\right)\right).
		\end{align*}
	\end{proof}
	
	\begin{example}\label{ex:57}
		Assume further that $b$ is a free Poisson element with parameter $\lambda>0$. This means in particular that $\psi_\nu^{-1}(z)=\tfrac{z}{(\lambda+z)(1+z)}$. We have 
		\[\omega_2(z)=\psi_\nu^{-1}(\psi_{\mu\boxtimes\nu}(z))=\frac{\psi_{\mu\boxtimes\nu}(z)}{(\lambda+\psi_{\mu\boxtimes\nu}(z))(1+\psi_{\mu\boxtimes\nu}(z))}=\frac{\eta_{\mu\boxtimes\nu}(z)}{\lambda+\psi_{\mu\boxtimes\nu}(z)}.\]
		Taking into account that $\omega_1(z)\omega_2(z)=z\eta_{\mu\boxtimes\nu}(z)$, we see that $\omega(z)=\omega_1(z)=z(\lambda+\psi_{\mu\boxtimes\nu}(z))$. Hence we obtain in the case $t\in\mathrm{supp}(\mu\boxtimes\nu)^{ac}\setminus E$ that
		\begin{align*}
			h(t)=-\frac{1}{\pi f_{\mu\boxtimes\nu}(t)}\operatorname{Im}\left(\frac{\psi_{\mu\boxtimes\nu}\left(\frac{1}{t}\right)}{\lambda+\psi_{\mu\boxtimes\nu}\left(\frac{1}{t}\right)}\right).
		\end{align*}
		Using the relation $\psi_{\mu\boxtimes\nu}(z)=\frac{1}{z}G_{\mu\boxtimes\nu}\left(\frac{1}{z}\right)-1$, straightforward processing of the resulting formula gives
		\[h(t)=\frac{\lambda t}{|\lambda-1+tG_{\mu\boxtimes\nu}(t)|^2}.\]
		
		If $\lambda<1$ and $\mu(\{0\})<\nu(\{0\})=1-\lambda$ then for $t=0$ we get
		\[
		h(0)=\frac{-\lambda}{(1-\lambda)\psi_\mu^{-1}(-\lambda)}.
		\]
		Remember that $\psi_\mu^{-1}(-\lambda)=\lim_{z\to-\infty}\omega(z)=\lim_{z\to-\infty}z(\lambda+\psi_{\mu\boxtimes\nu}(z))$.  
		One easily verifies that 
		$\psi_{\mu\boxtimes\nu}(z)=\psi_{(\mu\boxtimes\nu)^{ac}}(z)$. Then we have $\lim_{z\to-\infty}\omega(z)=\lim_{z\to-\infty}z(\lambda+\psi_{(\mu\boxtimes\nu)^{ac}}(z))=\lim_{z\to-\infty}G_{(\mu\boxtimes\nu)^{ac}}\left(\frac{1}{z}\right)=G_{(\mu\boxtimes\nu)^{ac}}\left(0\right)$. Finally we obtain 
		\[h(0)=-\frac{\lambda}{(1-\lambda)G_{(\mu\boxtimes\nu)^{ac}}\left(0\right)}.
		\]
		
		One can also find the overlap function, using  $\psi_{\mu\boxtimes\nu}(z)=\frac{1}{z}G_{\mu\boxtimes\nu}\left(\frac{1}{z}\right)-1$ and $\omega(z)=\omega_1(z)=z(\lambda+\psi_{\mu\boxtimes\nu}(z))$ the overlap function from Proposition \ref{prop:53} in the case $t\in\mathrm{supp}(\mu\boxtimes\nu)\setminus E$ and $s\in\mathrm{supp}(\mu)$ simplifies to
		\begin{align*}
			o(s,t)=&-\frac{1}{\pi} \frac{1}{t f_{\mu\boxtimes\nu}(t)} \operatorname{Im}\left(\frac{1}{1-\tfrac{s}{t}(\lambda+t G_{\mu\boxtimes\nu}(t)-1)}\right)\\
			=&\frac{st}{(t-s(\lambda-1)-st \pi H_{\mu\boxtimes\nu}(t))^2+\pi^2 s^2 t^2 f_{\mu\boxtimes\nu^2}(t)}.
		\end{align*}
	\end{example}
	
	
\section{Relation to conditional freeness}
\label{CFreeFormulas}	
	In this section, we relate to conditionally free probability theory the 
	general question of determining the conditional expectation $E(x \mid y)$ 
	in the case where: $y=P(a,b)$ is a suitable selfadjoint element in the von Neumann
	algebra generated by two freely independent non-commutative variables $a,b$ 
	in some ncps $(\mathcal{A},\varphi)$, 
	and $x=f(a)$ is a selfadjoint element in the von Neumann algebra generated 
	by $a$. (For concrete examples, $P(a,b)$ could be a selfadjoint polynomial 
	in $a$ and $b$; or, in a situation where $a$ is known to be positive, it 
	could be that $P(a,b) = a^{1/2} b a^{1/2}$.) 
	
	Without much loss of generality, the considerations about $E( f(a) \mid P(a,b) )$ 
	which are made throughout this section will also take in the additional assumptions 
	that the function $f$ (which is a real-valued bounded Borel function supported 
	on the spectrum of $a$) is non-negative and is normalized by the condition that 
	$\varphi \bigl( \, f(a) \, \bigr) = 1$.
	
	\subsection{Free denoiser as Radon-Nikodym derivative}
	
	\begin{remark-and-notation}\label{freedenoiserasRadonNikodym}
		In the framework that was just described, the conditional 
		expectation $E(f(a) \mid P(a,b))$ is a positive element in the von Neumann 
		algebra generated by $P(a,b)$, and can therefore be 
		written as $h \bigl( \, P(a,b) \, \bigr)$, where $h$ is a bounded Borel 
		supported on the spectrum of $P(a,b)$ and with values in $[0, \infty )$.  
		We will refer to $h$ by calling it {\em free denoiser} (of $f(a)$ with 
		respect to $P(a,b)$).  Our point in the present remark is to observe that the 
		free denoiser $h$ can be written as a Radon-Nikodym derivative, in connection 
		to two distributions which appear naturally in the discussion.  One of the 
		distributions is just the distribution of $P(a,b)$ with respect to the state 
		$\varphi$ of our ncps $( \cA , \varphi )$; we will denote this distribution
		as $\mu_{P(a,b)}^{\varphi}$.  For the second distribution, we consider the
		new state $\chi : \cA \to \bC$ defined by putting
		\begin{equation*} 
			\chi (c) := \varphi ( \, f(a)c \, \, ) \ \ c \in \cA,
		\end{equation*}
		and we consider the distribution of $P(a,b)$ in the ncps $( \cA , \chi )$; 
		the latter distribution will be denoted as $\mu_{P(a,b)}^{\chi}$.  The claim
		we make is that:
		\begin{equation*} 
			\frac{ d \mu_{P(a,b)}^{\chi} }{ 
				d \mu_{P(a,b)}^{\varphi} } = h 
			\ \ \mbox{ (free denoiser),}
		\end{equation*}
		i.e.~that for any real valued bounded Borel function 
		$g$ supported on the spectrum of $P(a,b)$ we have
		\begin{equation}  \label{eqn:RN2}
			\int_{\bR} g (t) \mu_{P(a,b)}^{\chi} ( dt )
			= \int_{\bR} g (t) h(t) \mu_{P(a,b)}^{\varphi} ( dt ).
		\end{equation}
		For the verification of the claim, we start from the left-hand side of 
		(\ref{eqn:RN2}) and we write:
		\begin{align*}
			\int_{\bR} g (t) \mu_{P(a,b)}^{\chi} ( dt )
			& = \chi \bigl( \, g( \, P(a,b) \, \bigr)
			\mbox{ (by the definition of $\mu_{P(a,b)}^{\chi}$) }             \\
			& = \varphi \bigl( \, f(a) \, g( \, P(a,b) \, \bigr)
			\mbox{ (by the definition of $\chi$) }                            \\
			& = \varphi \Bigl( 
			\, E \bigl( f(a) \, g( P(a,b) ) \mid P(a,b) \bigr) \, \Bigr)
			\mbox{ (since $E( \cdot \mid P(a,b)$) is $\varphi$-invariant) }    \\
			& = \varphi \Bigl( 
			\, E \bigl( f(a) \mid P(a,b) \bigr) \cdot g (P(a,b)) \, \Bigr)
			\mbox{ (conditional expectation property) }                       \\
			& = \varphi \bigl( \, h( P(a,b)) \cdot g (P(a,b)) \, \bigr)
			\mbox{ (by the definition of $h$) }                              \\
			& = \varphi \bigl( \, (g \cdot h) (P(a,b)) \, \bigr)                 \\
			& =  \int_{\bR} g (t) h(t) \mu_{P(a,b)}^{\varphi} ( dt )
			\mbox{ (by the definition of $\mu_{P(a,b)}^{\varphi}$), }   
		\end{align*}
		thus arriving to the right-hand side of (\ref{eqn:RN2}).
		
		\vspace{10pt}
		
		We next move to connecting the two distributions 
		$\mu_{P(a,b)}^{\varphi}, \mu_{P(a,b)}^{\chi}$ to conditionally free probability 
		theory.  Towards that end, we will first review (following 
		\cite{BozejkoLeinertSpeicher}) some basic definitions and facts of this theory.
	\end{remark-and-notation}

	\subsection{Review of conditionally free probability theory}
	
	\begin{definition}
		$1^o$ A {\em two-state non-commutative probability space} is a 
		triple $(\mathcal{A},\varphi,\chi)$
		where $(\mathcal{A},\varphi)$ is a ncps and 
		$\chi:\mathcal{A}\to \mathbb{C}$ is another normal state (unital, 
		positive, weakly continuous linear functional) on $\mathcal{A}$.
		
		\vspace{6pt}
		
		\noindent
		$2^o$ Let $(\mathcal{A},\varphi,\chi)$ be a two-state
		non-commutative probability space and let $a$ be a selfadjoint element of $\cA$. 
		Then besides the distribution $\mu$ of $a$ with respect to $\varphi$,
		one can also consider the distribution $\nu$ of $a$ in the ncps $( \cA , \chi )$.
		The couple $( \mu , \nu )$ of compactly supported probability measures on $\bR$
		that is obtained in this way will be referred to as 
		{\em distribution of $a$ in $( \cA , \varphi, \chi )$}.
	\end{definition}

	\begin{definition}
		Unital subalgebras $(\mathcal{A}_i)_{i\in I}$ of a unital complex algebra $\mathcal{A}$ are said to be conditionally freely independent in the two-state non-commutative probability space $(\mathcal{A},\varphi,\chi)$ when they are freely independent in $(\mathcal{A},\varphi)$ and in addition the following condition holds: for all $n\geq 2$, $i_1\neq i_2\neq \ldots \neq i_n$ in $I$ and $a_1\in \mathcal{A}_{i_1}, \ldots , a_n\in \mathcal{A}_{i_n}$ such that $\varphi(a_1)=\ldots =\varphi(a_n)=0$,$$\chi(a_1\cdots a_n)=\chi(a_1)\cdots\chi(a_n).$$
		
		Accordingly, non-commutative random variables will be said to be conditionally freely independent in the two-state non-commutative probability space $(\mathcal{A},\varphi,\chi)$ when the von Neumann subalgebras they generate in $\mathcal{A}$ are so.
	\end{definition}

	\subsection{Upgrading freeness to conditional freeness}
	
	As originally observed in \cite{BozejkoLeinertSpeicher}, freely independent 
	unital subalgebras $(\mathcal{A}_i)_{i\in I}$ of a non-commutative probability 
	space $(\mathcal{A},\varphi)$ are conditionally freely independent in the two-state 
	non-commutative probability space given by $(\mathcal{A},\varphi,\varphi)$. 
	We will use the following slightly more general result.
	
	\begin{proposition}
		Let $(\mathcal{A},\varphi)$ be a non-commutative probability space and $(\mathcal{A}_i)_{i\in I}$ be freely independent unital subalgebras in $(\mathcal{A},\varphi)$. Fix $i_0\in I$ and a positive element 
		$x \in \mathcal{A}_{i_0}$ such that $\varphi(x)=1$. Define another state $\chi:\mathcal{A}\to \mathbb{C}$ by
		\[
		\chi(y):=\varphi(x y),\qquad \forall y\in\mathcal{A}.
		\]
		Then the subalgebras $(\mathcal{A}_i)_{i\in I}$ are conditionally freely independent in $(\mathcal{A},\varphi,\chi)$.
	\end{proposition}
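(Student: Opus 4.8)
The plan is to verify directly the two clauses in the definition of conditional freeness. The first clause — that the $\mathcal{A}_i$ are freely independent in $(\mathcal{A},\varphi)$ — is exactly our hypothesis, so there is nothing to do there. All the content lies in the second clause: for every $n\geq 2$, every index string $i_1\neq i_2\neq\cdots\neq i_n$ in $I$, and every tuple $a_1\in\mathcal{A}_{i_1},\ldots,a_n\in\mathcal{A}_{i_n}$ with $\varphi(a_1)=\cdots=\varphi(a_n)=0$, one must show that $\chi(a_1\cdots a_n)=\chi(a_1)\cdots\chi(a_n)$. My strategy is to prove that, under these hypotheses, \emph{both} sides vanish.

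I would first dispose of the right-hand side. For each $k$ one has $\chi(a_k)=\varphi(x\,a_k)=\varphi(a_k)+\varphi\bigl((x-1)a_k\bigr)=\varphi\bigl((x-1)a_k\bigr)$, where I use $\varphi(x)=1$ so that $x-1\in\mathcal{A}_{i_0}$ is $\varphi$-centered. If $i_k\neq i_0$, then $\varphi\bigl((x-1)a_k\bigr)$ is the value of $\varphi$ on a product of two centered elements lying in different free subalgebras, hence is $0$ by freeness; thus $\chi(a_k)=0$ whenever $i_k\neq i_0$. Since $i_1\neq i_2$, at least one of $i_1,i_2$ differs from $i_0$, so at least one factor $\chi(a_k)$ vanishes and the product $\chi(a_1)\cdots\chi(a_n)$ is $0$.

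It then remains to show that the left-hand side $\chi(a_1\cdots a_n)=\varphi(x\,a_1\cdots a_n)$ is $0$ as well, and here I would split according to whether $i_0=i_1$. If $i_0\neq i_1$, I write $x=1+(x-1)$, giving $\varphi(x\,a_1\cdots a_n)=\varphi(a_1\cdots a_n)+\varphi\bigl((x-1)a_1\cdots a_n\bigr)$; the first term is $0$ since $a_1\cdots a_n$ is an alternating product of centered elements, and the second is $0$ for the same reason, the index string $i_0,i_1,\ldots,i_n$ now having distinct consecutive entries and all factors being centered. If instead $i_0=i_1$, then $x\,a_1\in\mathcal{A}_{i_0}$, and I would re-center it as $x\,a_1=\varphi(x\,a_1)\,1+y$ with $y:=x\,a_1-\varphi(x\,a_1)\,1\in\mathcal{A}_{i_0}$ centered, obtaining $\varphi(x\,a_1\cdots a_n)=\varphi(x\,a_1)\,\varphi(a_2\cdots a_n)+\varphi\bigl(y\,a_2\cdots a_n\bigr)$. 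Here $\varphi(a_2\cdots a_n)=0$ (it is $\varphi(a_2)=0$ when $n=2$, and an alternating centered product when $n\geq 3$), while $\varphi\bigl(y\,a_2\cdots a_n\bigr)=0$ because $y,a_2,\ldots,a_n$ is an alternating centered product whose index string $i_0=i_1,i_2,\ldots,i_n$ has distinct consecutive entries. In both cases the left-hand side equals $0$, matching the right-hand side.

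The whole argument is elementary once this framing is adopted, and I do not expect a serious obstacle. The one step that requires a little care — and the only genuinely substantive point — is the case $i_0=i_1$, where the non-centered element $x$ must be absorbed into $a_1$ and the resulting product re-centered before freeness can be invoked; the rest is bookkeeping of the index strings.
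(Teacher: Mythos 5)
Your proposal is correct and follows essentially the same route as the paper's own proof: both show that the two sides of the c-freeness identity vanish separately, using the split into the cases $i_0\neq i_1$ (where one writes $x=1+(x-1)$) and $i_0=i_1$ (where one recenters $xa_1$ as $\varphi(xa_1)1+\bigl(xa_1-\varphi(xa_1)1\bigr)$ inside $\mathcal{A}_{i_0}$), together with freeness applied to alternating centered products. The only cosmetic difference is that you derive $\chi(a_k)=0$ by centering $x$ explicitly, whereas the paper invokes the factorization $\varphi(xa_k)=\varphi(x)\varphi(a_k)$ directly; these are the same fact.
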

	
	\begin{proof}
		Let $n\geq 2$, $i_1\neq i_2\neq \ldots \neq i_n$ in $I$ and 
		$a_1\in \mathcal{A}_{i_1}, \ldots , a_n\in \mathcal{A}_{i_n}$ be 
		such that $\varphi(a_1)=\ldots =\varphi(a_n)=0$. We want to show that 
		\begin{equation}\label{c-free condition}
			\chi(a_1\cdots a_n)=\chi(a_1)\cdots\chi(a_n).
		\end{equation} 
		
		Observe that there always exists $k\in \{1,\ldots,n\}$ such that $i_k\neq i_0$. Then, by free independence of $\mathcal{A}_{i_0}$ and $\mathcal{A}_{i_k}$, $\chi(a_k)=\varphi(xa_k)=\varphi(x)\varphi(a_k)=0$ so that the right-hand side of \eqref{c-free condition} vanishes.
		
		To prove that the left-hand side of \eqref{c-free condition} vanishes, we distinguish two cases: $i_0\neq i_1$ or $i_0=i_1$. On the one hand, if $i_0\neq i_1$, then \begin{eqnarray*}
			\chi(a_1\cdots a_n)
			&=&\varphi(xa_1\cdots a_n)\\
			&=&\varphi((x-1)a_1\cdots a_n)+\varphi(a_1\cdots a_n)\\
			&=&0,
		\end{eqnarray*}
		where we used twice in the last line free independence of $(\mathcal{A}_i)_{i\in I}$. 
		On the other hand, if $i_0=i_1$, then \begin{eqnarray*}
			\chi(a_1a_2\cdots a_n)
			&=&\varphi(xa_1a_2\cdots a_n)\\
			&=&\varphi((xa_1-\varphi(xa_1))a_2\cdots a_n)+\varphi(xa_1)\varphi(a_2\cdots a_n)\\
			&=&0,
		\end{eqnarray*}
		where we again used twice in the last line free independence
		of $(\mathcal{A}_i)_{i\in I}$. In any case, we have proved that $$\chi(a_1\cdots a_n)=0=\chi(a_1)\cdots\chi(a_n),$$ as required.\end{proof} 
	
	\subsection{Conditional expectations and conditional freeness}
	
	\begin{definition}
		Let $a,b$ be conditionally freely independent selfadjoint non-commutative random variables with respective distributions $(\mu_1,\mu_2)$ and $(\nu_1,\nu_2)$ in some two-state non-commutative probability space $(\mathcal{A},\varphi,\chi)$ and $P$ be a selfadjoint polynomial in two non-commuting variables. We will denote by $(\mu_1\Box^P\nu_1,(\mu_1,\mu_2)\Box_c^P(\nu_1,\nu_2))$ the distribution of the non-commutative random variable $P(a,b)$ in $(\mathcal{A},\varphi,\chi)$.
	\end{definition}
	
	\begin{lemma} \label{twostatesdistributionofP(a,b)}
		Let $a,b$ be freely independent selfadjoint non-commutative random variables with respective distributions $\mu, \nu$ in a ncps $(\mathcal{A},\varphi)$, $f:\mathbb{R}\to \mathbb{R}$ be a bounded positive Borel-measurable function such that $\varphi(f(a))=1$ and $P$ be a selfadjoint polynomial in two non-commuting variables. Define another state $\chi:\mathcal{A}\to \mathbb{C}$ by
		\[\chi(y):=\varphi(f(a)y),\qquad \forall y\in\mathcal{A}.\] Then the distribution of the non-commutative random variable $P(a,b)$ in the two-state non-commutative probability space $(\mathcal{A},\varphi,\chi)$ is $(\mu\Box^P\nu,(\mu,f\cdot\mu)\Box_c^P(\nu,\nu))$.
	\end{lemma}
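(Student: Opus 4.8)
The plan is to read the result off directly from the conditional freeness established in the preceding Proposition, together with the way the symbols $\Box^P$ and $\Box_c^P$ were introduced in the Definition above. Applying that Proposition with $x = f(a)$ -- which lies in $W^{*}(a)$, is positive because $f \geq 0$, and satisfies $\varphi(f(a)) = \int_{\mathbb{R}} f \, d\mu = 1$ by hypothesis -- shows that $a$ and $b$ are conditionally freely independent in the two-state space $(\mathcal{A}, \varphi, \chi)$. Since the notation $\bigl( \mu_1 \Box^P \nu_1, (\mu_1,\mu_2) \Box_c^P (\nu_1,\nu_2) \bigr)$ encodes the two-state distribution of $P(a,b)$ as a function of the two-state marginals of $a$ and $b$ alone, it remains only to identify those marginals.

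First I would compute the two-state distribution of $a$ in $(\mathcal{A}, \varphi, \chi)$. Its $\varphi$-distribution is $\mu$ by hypothesis, while for the $\chi$-distribution one has, for every $n$,
\[
\chi(a^n) = \varphi \bigl( f(a)\, a^n \bigr) = \int_{\mathbb{R}} f(s)\, s^n \, d\mu(s) = \int_{\mathbb{R}} s^n \, d(f\cdot\mu)(s),
\]
which identifies the $\chi$-distribution of $a$ as the probability measure $f\cdot\mu$ (a probability measure precisely because $\int f \, d\mu = 1$). Thus the two-state distribution of $a$ is $(\mu, f\cdot\mu)$. Next I would compute the two-state distribution of $b$: its $\varphi$-distribution is $\nu$, and for the $\chi$-distribution the $\varphi$-freeness of $a$ and $b$ gives
\[
\chi(b^n) = \varphi \bigl( f(a)\, b^n \bigr) = \varphi(f(a))\, \varphi(b^n) = \varphi(b^n) = \int_{\mathbb{R}} t^n \, d\nu(t),
\]
so the $\chi$-distribution of $b$ is again $\nu$ and the two-state distribution of $b$ is $(\nu, \nu)$.

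Feeding the marginals $(\mu, f\cdot\mu)$ for $a$ and $(\nu, \nu)$ for $b$ into the Definition of $\Box^P$ and $\Box_c^P$ then yields exactly $\bigl( \mu \Box^P \nu, (\mu, f\cdot\mu) \Box_c^P (\nu, \nu) \bigr)$ for the two-state distribution of $P(a,b)$, as claimed. I do not expect a serious obstacle here: once c-freeness is in force, the statement is an immediate consequence of the well-definedness built into the symbol $\Box_c^P$, namely the standard fact (implicit in the cited work of Bo\.zejko-Leinert-Speicher) that the two-state distribution of a polynomial in conditionally free variables is determined by their two-state marginals. The only point requiring a short computation is the factorization $\varphi(f(a)\, b^n) = \varphi(f(a))\,\varphi(b^n)$, which rests on $f(a) \in W^{*}(a)$ together with the $\varphi$-freeness of $a$ and $b$; this is what converts the $\chi$-marginal of $b$ back into $\nu$ and is the crux of the bookkeeping.
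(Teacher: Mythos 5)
Your proof is correct and follows essentially the same route as the paper: identify the $\chi$-marginals of $a$ and $b$ as $f\cdot\mu$ and $\nu$ (using the freeness factorization $\varphi(f(a)g(b))=\varphi(f(a))\varphi(g(b))$), invoke the preceding proposition with $x=f(a)$ to upgrade to conditional freeness, and read off the conclusion from the definition of $\Box^P$ and $\Box_c^P$. The only cosmetic difference is that you identify the $\chi$-distributions via moments (valid here since the measures are compactly supported) whereas the paper tests against arbitrary bounded Borel functions.
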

	
	\begin{proof}
		By assumption, the respective distributions of $a$ and $b$ with respect to $\varphi$ are $\mu$ and $\nu$. We check then that the respective distributions of $a$ and $b$ with respect to $\chi$ are $f\cdot \mu$ and $\nu$: for any bounded Borel-measurable function $g:\mathbb{R}\to \mathbb{R}$, $$\chi(g(a))=\varphi(f(a)g(a))=\int_{\mathbb{R}}f(s)g(s)\mu(ds)=\int_{\mathbb{R}}g(s)(f\cdot \mu)(ds);$$
		$$\chi(g(b))=\varphi(f(a)g(b))=\varphi(f(a))\varphi(g(b))=\int_{\mathbb{R}}g(s)\nu(ds),$$
		where we have used that $a$ and $b$ are freely independent and $\varphi(f(a))=1$. Now observe that $a$ and $b$ are conditionally freely independent in $(\mathcal{A},\varphi,\chi)$ by a direct application of Proposition 6.3 (with $x=f(a)$) to the von Neumann subalgebras generated by $a$ and $b$, and the conclusion follows.
	\end{proof} 
	
	\begin{lemma}
		Let $\mu, \nu$ be two compactly supported Borel probability
		measures on $\mathbb{R}$, $f:\mathbb{R}\to \mathbb{R}$ be a bounded positive Borel-measurable function such that $\int_{\mathbb{R}}f(x)\mu(dx)=1$ and $P$ be a polynomial in two non-commuting variables. Then $(\mu,f\cdot\mu)\Box_c^P(\nu,\nu)$ is absolutely continuous with respect to $\mu\Box^P\nu$.\end{lemma}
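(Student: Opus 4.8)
The plan is to prove the absolute continuity by passing to a concrete realization of the c-free convolution and then applying an elementary positivity estimate. By Lemma \ref{twostatesdistributionofP(a,b)}, I may realize the two measures in question as follows: choose freely independent selfadjoint elements $a,b$ with respective distributions $\mu,\nu$ in a tracial $W^{*}$-probability space $(\cA,\varphi)$ with $\varphi$ a faithful trace, and define the state $\chi(y)=\varphi(f(a)y)$. Writing $c:=P(a,b)$ (taken selfadjoint, so that it has a genuine real distribution), the lemma identifies $\mu\Box^P\nu$ with the distribution $\mu^{\varphi}_{c}$ of $c$ with respect to $\varphi$, and $(\mu,f\cdot\mu)\Box^P_c(\nu,\nu)$ with the distribution $\mu^{\chi}_{c}$ of $c$ with respect to $\chi$. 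Since absolute continuity is a property intrinsic to the resulting pair of measures, it suffices to establish $\mu^{\chi}_{c}\ll\mu^{\varphi}_{c}$ in this single model.

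First I would reduce the claim to spectral projections of $c$. For a Borel set $A\subseteq\bR$ let $q:=1_A(c)$ be the corresponding spectral projection, obtained by functional calculus on the selfadjoint element $c$. By the definitions of the two distributions one has $\mu^{\varphi}_{c}(A)=\varphi(q)$ and $\mu^{\chi}_{c}(A)=\chi(q)=\varphi\bigl(f(a)\,q\bigr)$. It is therefore enough to prove, for every such spectral projection $q$, the implication $\varphi(q)=0\Rightarrow\varphi\bigl(f(a)q\bigr)=0$.

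The key step is then a one-line estimate exploiting the boundedness and positivity of $f$. Pick $M\in[0,\infty)$ with $0\le f\le M$, so that $f(a)$ is a positive element with $f(a)\le M\oneA$ and, since $f$ is real-valued, $f(a)$ is selfadjoint. The Cauchy--Schwarz inequality for the state $\varphi$ then gives
\[
\bigl|\varphi\bigl(f(a)\,q\bigr)\bigr|^2\le \varphi\bigl(f(a)^2\bigr)\,\varphi(q^{*}q)\le M^2\,\varphi(q).
\]
(Equivalently, using the trace property one writes $\varphi\bigl(f(a)q\bigr)=\varphi\bigl(q\,f(a)\,q\bigr)$ and observes $0\le q\,f(a)\,q\le Mq$, whence $0\le\varphi\bigl(q f(a) q\bigr)\le M\varphi(q)$ by positivity of $\varphi$.) Consequently $\varphi(q)=0$ forces $\varphi\bigl(f(a)q\bigr)=0$, that is, $\mu^{\varphi}_{c}(A)=0$ forces $\mu^{\chi}_{c}(A)=0$, which is precisely the asserted absolute continuity.

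I do not expect a genuine obstacle here: once the c-free convolution is identified with the distributions of $c=P(a,b)$ in the concrete two-state space via Lemma \ref{twostatesdistributionofP(a,b)}, the remaining content is the elementary positivity estimate above. The only points needing a little care are the legitimacy of the reduction to a fixed realization (i.e.~that $\mu\Box^P\nu$ and $(\mu,f\cdot\mu)\Box^P_c(\nu,\nu)$ are determined by $\mu,\nu,f,P$ alone, independently of the model) and the fact that $P(a,b)$ is selfadjoint, so that $1_A(P(a,b))$ is a well-defined projection amenable to functional calculus. As a byproduct, the estimate shows that the Radon--Nikodym derivative is bounded by $\|f\|_{\infty}$, in agreement with its identification as the free denoiser $h$ in Theorem \ref{thm:17}.
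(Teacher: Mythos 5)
Your proposal is correct and follows essentially the same route as the paper: identify $(\mu\Box^P\nu,(\mu,f\cdot\mu)\Box_c^P(\nu,\nu))$ with the distributions of $P(a,b)$ under $(\varphi,\chi)$ via Lemma \ref{twostatesdistributionofP(a,b)}, then apply the Cauchy--Schwarz estimate $\chi(\mathbf{1}_E(P(a,b)))=\varphi(f(a)\mathbf{1}_E(P(a,b)))\leq \Vert f\Vert_{\infty}\sqrt{(\mu\Box^P\nu)(E)}$ to null sets $E$. Your alternative positivity argument via $0\leq q\,f(a)\,q\leq \Vert f\Vert_{\infty}\,q$, and the resulting bound $\Vert f\Vert_{\infty}$ on the Radon--Nikodym derivative, are pleasant refinements but do not change the substance.
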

	
	\begin{proof}
		Let $E$ be a Borel subset of $\mathbb{R}$ such that $(\mu\Box^P\nu)(E)=0$. By the preceding Lemma, \begin{eqnarray*}\left((\mu,f\cdot\mu)\Box_c^P(\nu,\nu)\right)(E)
			&=&\chi(\mathbf{1}_E(P(a,b)))\\
			&=&\varphi(f(a)\mathbf{1}_E(P(a,b)))\\
			&\leq &\sqrt{\varphi(f(a)^2)}\sqrt{\varphi(\mathbf{1}_E(P(a,b)))}\\
			&\leq &\Vert f\Vert_{\infty}\sqrt{(\mu\Box^P\nu)(E)}=0. \end{eqnarray*} This proves absolute continuity of $(\mu,f\cdot\mu)\Box_c^P(\nu,\nu)$ with respect to $\mu\Box^P\nu$.
	\end{proof} 
	
	\vspace{10pt}
	
	The main result of this section is the following.
	
	\vspace{6pt}
	
	\begin{theorem}   \label{thm:68}
		Let $a,b$ be freely independent selfadjoint non-commutative random variables with respective 
		distributions $\mu, \nu$ in a ncps $(\mathcal{A},\varphi)$.  Let $f:\mathbb{R}\to \mathbb{R}$ 
		be a bounded non-negative Borel-measurable function such that $\varphi(f(a))=1$, and let $P$ 
		be a selfadjoint polynomial in two non-commuting variables. Then 
		\[
		E(f(a)\mid P(a,b))
		= \frac{d\left((\mu,f\cdot\mu)\Box_c^P(\nu,\nu)\right)}{d\left(\mu\Box^P\nu\right)}(P(a,b)).
		\]
	\end{theorem}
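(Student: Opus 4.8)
The plan is to assemble the statement from the ingredients established just above; the only genuinely new step is a careful matching of notation, as all of the analytic content already resides in the preceding lemmas. The starting point is the Radon--Nikodym description of the free denoiser recorded in Remark and Notation \ref{freedenoiserasRadonNikodym}. Writing $\chi(\,\cdot\,) = \varphi(f(a)\,\cdot\,)$ and letting $\mu^{\varphi}_{P(a,b)}$, $\mu^{\chi}_{P(a,b)}$ denote the distributions of $P(a,b)$ with respect to $\varphi$ and to $\chi$, that remark already gives
\[
E(f(a)\mid P(a,b)) = h(P(a,b)), \qquad h = \frac{d\mu^{\chi}_{P(a,b)}}{d\mu^{\varphi}_{P(a,b)}},
\]
provided the absolute continuity $\mu^{\chi}_{P(a,b)}\ll\mu^{\varphi}_{P(a,b)}$ is in hand. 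Since that remark carried out its computation under exactly the hypotheses now in force, I would simply reuse it verbatim.

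The second step is to recognize the two measures above as the c-free convolution objects appearing in the statement. As $f$ is non-negative with $\varphi(f(a))=1$, the element $f(a)$ is positive with $\varphi(f(a))=1$, so the proposition upgrading freeness to conditional freeness (applied with $x=f(a)$ lying in the subalgebra generated by $a$) shows that $a$ and $b$ are conditionally free in $(\cA,\varphi,\chi)$. Lemma \ref{twostatesdistributionofP(a,b)} then identifies the two-state distribution of $P(a,b)$ in $(\cA,\varphi,\chi)$ as $\bigl(\mu\Box^P\nu,\ (\mu,f\cdot\mu)\Box_c^P(\nu,\nu)\bigr)$, and unwinding the definition of the c-free convolution symbols this is precisely the pair of identifications
\[
\mu^{\varphi}_{P(a,b)} = \mu\Box^P\nu
\qquad\text{and}\qquad
\mu^{\chi}_{P(a,b)} = (\mu,f\cdot\mu)\Box_c^P(\nu,\nu).
\]

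Substituting these into the formula for $h$ yields the claimed expression, so it only remains to secure the absolute continuity invoked in the first step; this is exactly the content of the preceding lemma, whose Cauchy--Schwarz estimate bounds $\bigl((\mu,f\cdot\mu)\Box_c^P(\nu,\nu)\bigr)(E)$ by $\Vert f\Vert_\infty\sqrt{(\mu\Box^P\nu)(E)}$ for every Borel set $E$. I do not anticipate a real obstacle, since the theorem is essentially a synthesis: the one point needing care is the orientation of the Radon--Nikodym derivative --- the denoiser is the density of the $\chi$-distribution against the $\varphi$-distribution, and not the reverse --- which is why the non-negativity of $f$ together with the normalization $\varphi(f(a))=1$ (ensuring that $\chi$ is a genuine state of total mass one) are precisely the hypotheses that make each step above legitimate.
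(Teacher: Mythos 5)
Your proposal is correct and follows essentially the same route as the paper's own proof: the Radon--Nikodym description of the denoiser from Remark \ref{freedenoiserasRadonNikodym}, the identification of the pair $(\mu^{\varphi}_{P(a,b)},\mu^{\chi}_{P(a,b)})$ with $(\mu\Box^P\nu,(\mu,f\cdot\mu)\Box_c^P(\nu,\nu))$ via Lemma \ref{twostatesdistributionofP(a,b)}, and the Cauchy--Schwarz lemma for the absolute continuity. Your explicit citation of the absolute-continuity lemma and of the conditional-freeness upgrade (with $x=f(a)$) merely makes visible steps that the paper leaves implicit or delegates to the proof of Lemma \ref{twostatesdistributionofP(a,b)}.
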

	
	\begin{proof}
		As explained in Remark \ref{freedenoiserasRadonNikodym}, the free denoiser, 
		that is the bounded Borel-measurable function $h:\mathbb{R}\to \mathbb{R}$ 
		such that $$E(f(a)\mid P(a,b))=h(P(a,b))$$ is given by the Radon-Nikodym derivative 
		\[
		h=\frac{ d \mu_{P(a,b)}^{\chi} }{ d \mu_{P(a,b)}^{\varphi} }.
		\]
		Then, according to Lemma \ref{twostatesdistributionofP(a,b)}, $(\mu_{P(a,b)}^{\varphi},\mu_{P(a,b)}^{\chi})=(\mu\Box^P\nu,(\mu,f\cdot\mu)\Box_c^P(\nu,\nu))$, which concludes the proof.
	\end{proof} 
	
	\vspace{10pt}
	
	\begin{example}
		In order to illustrate how Theorem \ref{thm:68} works, let us work out the didactical example 
		when the distributions $\mu, \nu$ of our freely independent elements $a$ and $b$ are 
		just given as $\mu=\nu=\frac{1}{2}\delta_{0}+\frac{1}{2}\delta_{2}$, and we want to compute 
		$E( a \mid a+b )$.  (So, in the setting of Theorem \ref{thm:68}, the function $f$ is a suitable 
		truncation of $f(s) = s$, and the polynomial $P(a,b)$ is $a+b$.)  We know from the beginning, due
		to symmetry considerations, that the free denoiser $h$ will come out, in this case, as
		$h(t) = t/2$; let us, nevertheless, see how c-freeness considerations confirm this result.
		
		We first recall the well-known fact that the distribution $\mu\boxplus \nu$ of $a+b$ is the
		arcsine distribution, which is absolutely continuous with respect to Lebesgue measure with
		density $\frac{1}{\pi\sqrt{t(4-t)}}\mathbf{1}_{]0;4[}(t)$, and has reciprocal Cauchy transform 
		$F_{\mu\boxplus \nu}(z)=\sqrt{z(z-4)}$. 
		
		It is easy to check that the probability distribution $f\cdot \mu$ is the point mass at $2$, with reciprocal Cauchy transform $F_{f\cdot \mu}(\omega)=\omega-2$ and that the subordination function is $\omega(z)=\frac{1}{2}z+\frac{1}{2}\sqrt{z(z-4)}$. Hence, using Corollary 4 from \cite{Belinschi3}, the reciprocal Cauchy transform of the c-free convolution $(\mu,f\cdot \mu)\boxplus_c (\nu,\nu)$ is given by:
		$$F_{(\mu,f\cdot \mu)\boxplus_c (\nu,\nu)}(z)=F_{f\cdot \mu}(\omega(z))=\frac{1}{2}z+\frac{1}{2}\sqrt{z(z-4)}-2.$$ By Stieltjes inversion formula, one gets that the density of $(\mu,f\cdot \mu)\boxplus_c (\nu,\nu)$ is equal to $$-\frac{1}{\pi}\operatorname{Im}G_{(\mu,f\cdot \mu)\boxplus_c (\nu,\nu)}(t)=\frac{\operatorname{Im} F_{(\mu,f\cdot \mu)\boxplus_c (\nu,\nu)}(t)}{\pi |F_{(\mu,f\cdot \mu)\boxplus_c (\nu,\nu)}(t)|^2}=\frac{\sqrt{t}}{2\pi \sqrt{4-t}}\mathbf{1}_{]0;4[}(t).$$ Finally, the free denoiser is $h(t)=\frac{t}{2}$, as expected.
		
		We note here that, due to the fact that the spectrum of $a$ is the set $\{ 0, 2 \}$, we 
		can easily extend the formula discussed above to 
		\[
		E(f(a)\vert a+b)=E
		\left(\frac{f(2)-f(0)}{2}a+f(0)\vert a+b\right)=\frac{f(2)-f(0)}{2}\frac{a+b}{2}+f(0),
		\]
		holding for a general bounded Borel function
		$f:\mathbb{R}\to \mathbb{R}$.
	\end{example}
	
	\vspace{10pt}

	\begin{example}[Compression with a free projection]
		Let us consider $p$ a projection with $0<\varphi(p)<1$ and a free, positive element $a$ with $\varphi(a)=1$. We will calculate $E(a|pap)$, observe that overlap measure technique developed in Section 5 allows for computing $E(p|pap)$ and for $E(a|pap)$ we need to use the technique developed in this section. As we mentioned before the theory of finding distributions of polynomials in c-free variables is not well developed, but in this case we can bypass this difficulty.
		
		Define $\chi(c)=\varphi(a c)$. Of course we have $\mu^\varphi_{pap}=\mu_a\boxtimes\mu_{p}$. On the other hand $\chi(1)=1$ and for $n\geq 1$ we have
		\[\chi\left((pap)^n\right)=\varphi\left((pap)^{n+1}\right).\]
		This gives immediately
		\[G_{\mu^{\chi}_{pap}}(z)=\frac{1-\varphi(p)}{z}+z\left(G_{\mu_a\boxtimes\mu_p}(z)-\frac{1}{z}\right).\]
		Hence we see that $E(a|pap)=h(pap)$ with
		\begin{align*}   
			h(t)=
			\begin{cases}
				\frac{1-\varphi(p)}{\mu_a\boxtimes\mu_p(\{0\})} \qquad &\mbox{for } t=0,\\
				t &\mbox{for } t>0.
			\end{cases}
		\end{align*}
	\end{example}
	
	\vspace{10pt}
	
	%
	%

		\subsection{Some more details on \boldmath{$E(a \mid a+b)$} and 
			\boldmath{$E( a \mid a^{1/2}b a^{1/2})$}. }
		\label{subsec:continuity}
		
		$\ $
		
		\noindent
		In this subsection we use the two‐state framework to derive compact formulas 
		for $E(a\mid a+b)$ and for $E(a\mid a^{1/2}ba^{1/2})$.  
	
	\vspace{6pt}
	
	We begin with the additive case. We note that when one is interested only in 
	$E(a\mid a+b)$, a formula can be derived directly from Biane’s subordination.
	Namely, define $\chi(c)=\varphi(ac)$ then
	
	\begin{align*}
		G^{\chi}_{a+b}(z)&=\varphi\left(a\frac{1}{z-a-b}\right)=\varphi\left(a\frac{1}{\omega(z)-a}\right)=\varphi\left(\frac{a-\omega(z)}{\omega(z)-a}+\omega(z)\frac{1}{\omega(z)-a}\right)\\&=\omega(z)G_{\mu\boxplus\nu}(z)-1.
	\end{align*}
	Since the conditional expectation is given by the Radon-Nikodym derivative with 
	respect to $\mu\boxplus\nu$ which has only an absolutely continuous part and a 
	discrete part, it follows that
	\begin{equation}  \label{eqn:65a}
		h(t)=\lim_{\varepsilon\to 0}
		\frac{\operatorname{Im}\Bigl(G_{\mu\boxplus\nu}(t+i\varepsilon)\,\omega(t+i\varepsilon)\Bigr)}{\operatorname{Im}\Bigl(G_{\mu\boxplus\nu}(t+i\varepsilon)\Bigr)}.
	\end{equation}
	
	Indeed, (\ref{eqn:65a}) clearly holds for $t\in U$, where $U$ is the set on
		which the density of the absolutely continuous part of $\mu \boxplus \nu$ is positive.  
		For the atomic part, recall that if $t$ is an atom of a probability measure 
		$\sigma$ on $\bR$, then
		$\sigma(\{ t \})
		=\lim_{\varepsilon\to 0} i \varepsilon G_{\sigma}( t + i \varepsilon)
		=- \lim_{\varepsilon \to 0} \varepsilon \operatorname{Im} G_\sigma( t + i\varepsilon)$
		(cf.~Lemma \ref{lem:22}).
		Applying this to $\sigma = \mu\boxplus\nu$, we find that
		if $t$ is an atom of $\mu\boxplus\nu$, then
		\[
		h(t)=\lim_{\varepsilon\to 0}\frac{\varepsilon\operatorname{Im}\Bigl(G_{\mu\boxplus\nu}(t+i\varepsilon)\,\omega(t+i\varepsilon)\Bigr)}{\varepsilon\operatorname{Im}\Bigl(G_{\mu\boxplus\nu}(t+i\varepsilon)\Bigr)}=\lim_{\varepsilon\to 0}\frac{\operatorname{Im}\Bigl(G_{\mu\boxplus\nu}(t+i\varepsilon)\,\omega(t+i\varepsilon)\Bigr)}{\operatorname{Im}\Bigl(G_{\mu\boxplus\nu}(t+i\varepsilon)\Bigr)}.
		\]
		
		It is instructive to verify that, at an atom $t$ of $\mu \boxplus \nu$, (\ref{eqn:65a}) 
		yields the same value $h(t)$ as we had in Proposition \ref{prop:13}.  Indeed, let us consider 
		such a $t$, which we write as $t = \alpha+\beta$ with $\mu(\{\alpha\})+\nu(\{\beta\})>1$.  We then have
		\begin{align*}
			h(t)&=\lim_{\varepsilon\to 0}\frac{\operatorname{Im}\Bigl(G_{\mu\boxplus\nu}(t+i\varepsilon)\,\omega(t+i\varepsilon)\Bigr)}{\operatorname{Im}\Bigl(G_{\mu\boxplus\nu}(t+i\varepsilon)\Bigr)}=\alpha+\lim_{\varepsilon\to 0}\frac{\operatorname{Im}\Bigl(G_{\mu\boxplus\nu}(t+i\varepsilon)\,\left(\omega(t+i\varepsilon)-\alpha\right)\Bigr)}{\operatorname{Im}\Bigl(G_{\mu\boxplus\nu}(t+i\varepsilon)\Bigr)}\\
			&=\alpha+\lim_{\varepsilon\to 0}\frac{\operatorname{Im}\Bigl(G_{\mu}(\omega(t+i\varepsilon))\,\left(\omega(t+i\varepsilon)-\alpha\right)\Bigr)}{\operatorname{Im}\Bigl(G_{\mu\boxplus\nu}(t+i\varepsilon)\Bigr)}.
		\end{align*}
		But, similarly to what we discussed in the proof of Proposition \ref{prop:44},
        we have that 
		\[ 
		\lim_{\varepsilon\to 0}
		\operatorname{Im}\Bigl(G_{\mu}(\omega(t+i\varepsilon))\,\left(\omega(t+i\varepsilon)-\alpha\right)\Bigr)=\mu(\{\alpha\}),
		\]
		while ${\operatorname{Im}\Bigl(G_{\mu\boxplus\nu}(t+i\varepsilon)\Bigr)}$
		diverges; this implies, altogether, that $h(t)=\alpha=\omega(t)$.

	\vspace{6pt}
	
	Similar considerations to those shown above apply to the multiplicative case. 
	When we are interested in $E(a|a^{1/2}ba{1/2})$ (with $\chi$ defined as above) we have
	\[1+\psi^{\chi}_{a^{1/2}ba^{1/2}}(z)=\varphi(a(1-za^{1/2}ba^{1/2})^{-1})=\varphi(a(1-\omega(z)a)^{-1})=\frac{\psi^{\varphi}_{a^{1/2}ba^{1/2}}(z)}{\omega(z)}.\]
	Hence,
	\[G^{\chi}_{a^{1/2}ba^{1/2}}(z)=\frac{G_{a^{1/2}ba^{1/2}}^{\varphi}(z)-\frac{1}{z}}{\omega(\frac{1}{z})}.\]
	Thus for $\mu\boxtimes\nu$ a.e. $t$, we define the free denoiser  in the multiplicative case as
	\[
	h(t)=\lim_{\varepsilon\to 0}\frac{\operatorname{Im}\left(\frac{G_{a^{1/2}ba^{1/2}}^{\varphi}(t+i\varepsilon)-\frac{1}{t+i\varepsilon}}{\omega(\frac{1}{t+i\varepsilon})}\right)}{\operatorname{Im}\left(G_{a^{1/2}ba^{1/2}}^{\varphi}(t+i\varepsilon)\right)}.
	\]
	A simple calculation, similar to the additive case shows that this expression 
    agrees with the one given in \eqref{eq:prop56}. One point to explain is that the limit 
    $\lim_{\varepsilon\to 0}\omega\left(\frac{1}{i\varepsilon}\right)$ agrees with the 
    corresponding limit at $-\infty$ found in Lemma \ref{lem:52}; we refer to 
    \cite{JiRegularityMult}, where this was shown. 

    \vspace{20pt}

\section{Relation to matrix denoising}  \label{Sec:MatrixDenoising}
	
    As mentioned in the Introduction, the explicit formulas of free denoisers 
    noticed in \eqref{eqn:freetweedieboxplus} and \eqref{eqn:16a} of this paper 
    had previously appeared in \cite{BunAllezBouchaudPotters}, \cite{LedoitPeche}, 
    derived via random matrix techniques. We will now see that this is not
    coincidental, as the matrix denoising problem addressed in the aforementioned 
    references relates to free denoising via the phenomenon of asymptotic freeness.  

\vspace{10pt}

\subsection{Weak continuity of overlap measure.}
In this subsection we record, in the next Proposition \ref{continuity} and 
its Corollary \ref{cor:93}, a general
observation about the behaviour of overlap measures under convergence in
moments for the pairs of selfadjoint elements that are considered.

\vspace{6pt}

\begin{proposition}   \label{continuity}
Suppose we are given the following data.

\vspace{6pt}

$\to$ $( \cA, \varphi )$ is a $W^{*}$-probability space as considered in 
Section \ref{section:Z} and $(x,y)$ is a pair 

\hspace{0.5cm}
of elements in $\cA^{\mathrm{sa}}$, with overlap measure $\muovxy$.

\vspace{6pt}

$\to$ For every $N \in \bN$, $( \cA_N, \varphi_N )$ is a Borel-ncps, as considered 
in Section \ref{section:X}, and $(x_N,y_N)$ 

\hspace{0.5cm} is a pair of elements in $\cA_N^{\mathrm{sa}}$,
with overlap measure $\mu_{(x_N, y_N)}^{\mathrm{ov}}$.

\vspace{6pt}

\noindent
Suppose, moreover, that
\begin{equation}   \label{eqn:91a}
\lim_{N \to \infty} \varphi_N \bigl( x_N^p \, y_N^q \bigr)
= \varphi ( x^p \, y^q ),
\ \ \forall \, p,q \in \bN \cup \{ 0 \}.
\end{equation}
Then it follows that
\begin{equation}   \label{eqn:91b}
\lim_{N \to \infty}  \int_{\bR^2} f(s) \, g(t) 
\, d \mu_{(x_N, y_N)}^{\mathrm{ov}} (s,t) 
= \int_{\bR^2} f(s) \, g(t) \, d \muovxy (s,t),
\ \ \forall \, f,g \in C_{\mathrm{pol}} ( \bR ),
\end{equation}
where $C_{\mathrm{pol}} ( \bR ) 
= \{ f \in \Borpol ( \bR ) \mid f \mbox{ is continuous} \}$ 
(the space of continuous functions with polynomial growth from $\bR$ to $\bC$).
\hfill $\square$
\end{proposition}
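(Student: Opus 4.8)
The plan is to translate the statement entirely into a question about the method of moments for probability measures on $\bR^2$, and then to upgrade the resulting weak convergence to convergence against continuous test functions of polynomial growth. The bridge is the defining property of the overlap measure: applying the extension recorded in Remark \ref{rem:Y2}.$1^o$ to the monomials $f(s) = s^p$ and $g(t) = t^q$ gives
\[
\int_{\bR^2} s^p t^q \, d \mu_{(x_N, y_N)}^{\mathrm{ov}} (s,t) = \varphi_N \bigl( x_N^p \, y_N^q \bigr), \qquad
\int_{\bR^2} s^p t^q \, d \muovxy (s,t) = \varphi ( x^p \, y^q ),
\]
so the hypothesis \eqref{eqn:91a} says precisely that every mixed polynomial moment of $\mu_{(x_N, y_N)}^{\mathrm{ov}}$ converges to the corresponding moment of $\muovxy$. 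I would also record at the outset that $\muovxy$ is compactly supported (Remark and Notation \ref{rem:Z1}), hence is uniquely determined by its moments.

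First I would extract tightness. The convergence of the second moments $\int_{\bR^2} (s^2 + t^2) \, d \mu_{(x_N, y_N)}^{\mathrm{ov}}$ to $\varphi(x^2) + \varphi(y^2)$ makes these integrals bounded in $N$, and a Chebyshev estimate then bounds, uniformly in $N$, the mass that $\mu_{(x_N, y_N)}^{\mathrm{ov}}$ places outside a large disc. By Prokhorov's theorem every subsequence of $( \mu_{(x_N, y_N)}^{\mathrm{ov}} )_N$ then has a weakly convergent sub-subsequence; any such weak limit $\nu$ must share all its moments with $\muovxy$, since the boundedness of the (even) moments yields uniform integrability of each monomial and thus allows one to pass the moments to the weak limit. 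As $\muovxy$ is moment-determinate, $\nu = \muovxy$, and because every subsequential weak limit coincides with $\muovxy$ we conclude that $\mu_{(x_N, y_N)}^{\mathrm{ov}} \to \muovxy$ weakly.

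It remains to improve weak convergence, which controls only bounded continuous test functions, to the claimed convergence against $f(s) g(t)$ with $f,g \in C_{\mathrm{pol}} ( \bR )$. Here the product $(s,t) \mapsto f(s) g(t)$ is continuous and is dominated in absolute value by a function of the form $C \, (1 + s^2 + t^2)^m$; the uniform integrability of this dominating power, again a consequence of the boundedness of the high even moments established above, combines with the weak convergence to give the desired limit. Concretely I would split each integral over a large disc and its complement, using the uniform tail bound to make the complementary contribution small uniformly in $N$, and weak convergence to handle the essentially bounded main part.

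The genuinely technical point is this last upgrade: weak convergence by itself does not control the unbounded test functions in $C_{\mathrm{pol}} ( \bR )$, so the argument hinges on converting the convergence of all moments into uniform integrability of the dominating polynomials. By contrast, the method-of-moments core (tightness together with the determinacy of the compactly supported limit) is routine once the problem has been reformulated, via the defining property of the overlap measure, in terms of moments.
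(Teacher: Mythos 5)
Your proposal is correct and follows essentially the same route as the paper, which (in Remark \ref{rem:92}.$1^o$) sketches exactly this method-of-moments argument: rewrite the hypothesis as convergence of the mixed moments of the overlap measures, deduce weak convergence to $\muovxy$ via a compactness/selection argument combined with the moment-determinacy of the compactly supported limit, and leave the remaining details (which you supply, including the uniform-integrability upgrade to test functions in $C_{\mathrm{pol}}(\bR)$) to the reader. Your use of tightness plus Prokhorov's theorem in place of the paper's ``2-dimensional Helly selection theorem'' is an interchangeable technical choice, not a different approach.
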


\vspace{10pt}

\begin{remark}   \label{rem:92}
$1^o$ The proof of Proposition \ref{continuity} is an application of the
well-known ``method of moments'', where we start with Equation (\ref{eqn:91a}) 
written in the form 
\[
\lim_{N \to \infty} \int_{\bR^2} s^p \, t^q
\, d \mu_{(x_N, y_N)}^{\mathrm{ov}} (s,t) 
= \int_{\bR^2} s^p \, t^q \, d \muovxy (s,t),
\ \ \forall \, p,q \in \mathbb{N} \cup \{ 0 \},
\]
and where an intermediate step of the proof is to infer (by using a 
2-dimensional version of Helly's selection theorem) that the measures 
$\mu_{(x_N, y_N)}^{\mathrm{(ov)}}$ converge weakly to $\muovxy$. For the said 
intermediate step it is relevant to note that the measure $\muovxy$, having 
compact support, is uniquely determined by its moments.  The details of this
proof are left to the reader.

\vspace{6pt}

\noindent
$2^o$ In Proposition \ref{continuity} we assumed that $x,y$ live in a 
$W^{*}$-probability space, while on the $x_N, y_N$ we only made the looser 
assumption that they live in a Borel-ncps (which could be, in particular, a
space of $N \times N$ random matrices).  As reviewed in Section \ref{section:Z},
the $W^{*}$-framework allows us to consider the non-commutative conditional 
expectation $E( x \, | \, y )$.  While the analogous non-commutative conditional 
expectations may not be guaranteed for the pairs $(x_N, y_N)$, one can nevertheless
observe the following immediate consequence of Proposition \ref{continuity}, which 
only refers to $E(x \, | \, y )$.
\end{remark}

\vspace{6pt}

\begin{corollary}   \label{cor:93}
In the framework and notation of Proposition \ref{continuity}, one has
\begin{equation}   \label{eqn:93a}
\lim_{N \to \infty}  || \, x_N - g (y_N) \, ||_{2, \varphi_N}
= || \, x - g(y) ||_{2, \varphi} 
\geq || \, x - E(x \, | \, y ) \, ||_{2, \varphi},
\ \ \forall \, g \in C_{\mathrm{pol}} ( \bR ),
\end{equation}
where $|| \cdot ||_{2, \varphi}$ and $|| \cdot ||_{2, \varphi_N}$ 
are the $L^2$-norms associated to the spaces $( \cA , \varphi )$ and 
$( \cA_N, \varphi_N )$, respectively.  
\end{corollary}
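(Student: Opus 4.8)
The plan is to reduce both $L^2$-distances to integrals against overlap measures and then invoke Proposition \ref{continuity} directly. The starting point is that, since $x_N \in \cA_N^{\mathrm{sa}}$ and the functional calculus $g \mapsto g(y_N)$ is a unital $*$-homomorphism (so that $g(y_N)^{*} = \bar{g}(y_N)$), one has $(x_N - g(y_N))^{*}(x_N - g(y_N)) = x_N^2 - x_N g(y_N) - \bar{g}(y_N)\, x_N + |g|^2(y_N)$. Applying $\varphi_N$, using its trace property to rewrite $\varphi_N(\bar{g}(y_N)\, x_N) = \varphi_N(x_N\, \bar{g}(y_N))$, and then invoking the extension of Equation \eqref{eqn:Y1a} to $\Borpol(\bR)$ recorded in Remark \ref{rem:Y2}, each of the four resulting terms becomes an integral against $\mu_{(x_N,y_N)}^{\mathrm{ov}}$. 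Collecting them, I would establish the clean identity
\[
||\, x_N - g(y_N)\, ||_{2,\varphi_N}^2 = \int_{\bR^2} |s - g(t)|^2 \, d\mu_{(x_N,y_N)}^{\mathrm{ov}}(s,t),
\]
and, by the same computation in the limiting space, $||\, x - g(y)\, ||_{2,\varphi}^2 = \int_{\bR^2} |s-g(t)|^2 \, d\muovxy(s,t)$.

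The second step is to observe that the integrand decomposes as $|s-g(t)|^2 = s^2 - 2\, s \operatorname{Re}(g(t)) + |g(t)|^2$, a finite linear combination of product functions $f_i(s) h_i(t)$ all of whose factors lie in $C_{\mathrm{pol}}(\bR)$: the $s$-factors $s^2, s, 1$ are polynomials, while $\operatorname{Re}(g)$ and $|g|^2$ inherit continuity and polynomial growth from $g \in C_{\mathrm{pol}}(\bR)$. Hence Proposition \ref{continuity}, applied termwise and recombined by linearity, yields $\lim_{N} \int |s - g(t)|^2 \, d\mu_{(x_N,y_N)}^{\mathrm{ov}} = \int |s - g(t)|^2 \, d\muovxy$, i.e. convergence of the squared $L^2$-norms; taking square roots (both sides being nonnegative) gives the asserted limit equality in \eqref{eqn:93a}.

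For the inequality, I would note that $g(y)$ belongs to the von Neumann subalgebra $\cB$ generated by $y$: since $\Spec(y)$ is compact, the restriction $g|_{\Spec(y)}$ is a bounded continuous function, so $g(y) \in \cB$. The orthogonal-projection property of the conditional expectation recorded in Equation \eqref{eqn:Z2b} then gives $||\, x - E(x\mid y)\, ||_{2,\varphi} \le ||\, x - b\, ||_{2,\varphi}$ for every $b \in \cB$, and specializing to $b = g(y)$ closes the argument.

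The computation is essentially routine once Proposition \ref{continuity} is in hand, so there is no serious obstacle; the only points that call for a little care are the possibility that $g$ is complex-valued (so the conjugate factor $\bar{g}(y_N) = g(y_N)^{*}$ must be tracked, and one must confirm that $\operatorname{Re}(g)$ and $|g|^2$ still lie in $C_{\mathrm{pol}}(\bR)$) and the verification that $g(y)$ genuinely lies in $\cB$, which is what makes the optimality inequality \eqref{eqn:Z2b} applicable.
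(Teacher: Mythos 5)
Your proof is correct and is precisely the argument the paper intends: the paper states Corollary \ref{cor:93} as an immediate consequence of Proposition \ref{continuity}, and your fleshing-out (expanding the squared $\| \cdot \|_2$-norm via the trace property, rewriting each term as an integral against the overlap measure, applying the proposition termwise to the decomposition of $|s-g(t)|^2$, and finishing with the projection property \eqref{eqn:Z2b}) is exactly that intended route. The two points of care you flag (complex-valued $g$, and membership $g(y) \in \cB$ via compactness of $\Spec(y)$) are handled correctly.
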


\vspace{10pt}

Equation (\ref{eqn:93a}) has the following meaning: suppose that the denoiser
of $x$ with respect to $y$ is given by a continuous function 
$h \in C_{\mathrm{pol}} ( \bR )$ (that is, $h$ is such that 
$E( x \, | \, y ) = h(y)$).  Then the same $h$ must act as an asymptotic 
denoiser for $x_N$ with respect to $y_N$, at least in the weak sense that no
$g \in C_{\mathrm{pol}} ( \bR )$ can systematically outperform $h$
in the large $N$ limit.

\vspace{16pt}

\subsection{Additive matrix denoising.}
A rich source of asymptotically free pairs of sequences of selfadjoint non-commutative 
random variables is provided by random matrices. In this context, matrix denoising is 
a general problem arising in high-dimensional statistics: an unknown signal represented
by a Hermitian $N\times N$ random matrix $A_N$ is corrupted by a noise which is itself 
a Hermitian $N\times N$ random matrix $B_N$ and one would like to 
``recover the signal'' $A_N$ as a function of the noisy observation $A_N + B_N$.
	
In the lineage of \cite{LedoitPeche}, Allez, Bouchaud, Bun and Potters addressed
in \cite{BunAllezBouchaudPotters} a general problem of additive/multiplicative matrix
denoising: given two independent symmetric (respectively semi-positive definite) 
$N\times N$ random matrices $A_N, B_N$, where the distribution of the noise $B_N$ is 
invariant under the action by conjugation of the orthogonal group, the problem is 
to find the optimal (in the $L^2$ sense) Rotationally Invariant Estimator (RIE for 
short) of the signal $A_N$, based on the observation of $C_N=A_N+B_N$ (or 
$C_N=A_N^{1/2}B_NA_N^{1/2}$, respectively).  The optimal RIE is a bounded measurable 
function $\delta_N:\mathbb{R}\to \mathbb{R}$ which minimizes
$\mathbb{E}[N^{-1}\text{Tr}\left((A_N-\delta_N(C_N))^2\right)]$.  
The paper \cite{BunAllezBouchaudPotters} provides arguments supporting the claim that 
the sequence $(\delta_N)_{N\in \mathbb{N}}$ converges to a function 
$\delta:\mathbb{R}\to \mathbb{R}$ which may be expressed by using tools 
from free probability theory. 
	
When particularized to a $N\times N$ GOE-distributed additive noise $B_N$
with variance parameter $\sigma^2N^{-1}$, the sequence $(\delta_N)_{N\in \mathbb{N}}$ 
of optimal RIE of $A_N$ in the general additive matrix denoising problem converges
to the function $\delta:\mathbb{R}\to \mathbb{R}$ defined by: 
\[
\delta(t)=t-2\sigma^2\pi H(t),
\]
where $H$ is the Hilbert transform of the limiting empirical spectral 
distribution of $A_N+B_N$.
	
This result may be interpreted in terms of free denoising. Moreover, using 
this interpretation, we prove the following proposition, stating that no 
$g \in C_{\mathrm{pol}} ( \bR )$ can systematically outperform $\delta$ as
an asymptotic denoiser in the large $N$ limit. 

\vspace{6pt}

\begin{proposition} \label{prop:94}
Let, for each $N\in \mathbb{N}$, $A_N$ be a $N\times N$ deterministic
symmetric matrix and $B_N$ be a $N\times N$ random matrix distributed 
according to GOE with variance $\sigma^2N^{-1}$. Assume that 
\[
N^{-1}\text{Tr}(A_N^k)\underset{N\to \infty}{\longrightarrow}
\int_{\mathbb{R}}x^k\mu(dx),
\]
for some compactly supported Borel probability measure $\mu$ on $\mathbb{R}$.
Let $C_N=A_N+B_N$. Then, for all $g \in C_{\mathrm{pol}} ( \bR )$, 
\[
\lim_{N\to \infty}\mathbb{E}[N^{-1}\text{Tr}\left((A_N-g(C_N))^2\right)]
\geq \lim_{N\to \infty}\mathbb{E}[N^{-1}\text{Tr}\left((A_N-\delta(C_N))^2\right)].
\]
\end{proposition}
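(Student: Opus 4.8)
The plan is to read this off from the weak-continuity machinery of Subsection \ref{Sec:MatrixDenoising}.1, together with the identification of $\delta$ as the free additive Tweedie denoiser from Example \ref{example:47}. I would first set up the two sides of the limit inside the appropriate Borel-ncps: for each $N$ I take $(\cA_N,\varphi_N)$ to be the Borel-ncps of random $N\times N$ matrices from Example \ref{example:X9}, with $\varphi_N=\tfrac1N\mathbb{E}(\mathrm{Tr})$, and I set $x_N=A_N$, $y_N=C_N=A_N+B_N$; on the limiting side I take a $W^{*}$-probability space $(\cA,\varphi)$ carrying freely independent selfadjoint $a,b$ with $a$ distributed according to $\mu$ and $b$ a centred semicircular element of variance $\sigma^2$, and I set $x=a$, $y=a+b$. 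The first step is to verify the moment hypothesis \eqref{eqn:91a} of Proposition \ref{continuity}. Since $A_N$ is deterministic with $N^{-1}\mathrm{Tr}(A_N^k)\to\int x^k\,d\mu$ and $B_N$ is GOE of variance $\sigma^2N^{-1}$, the pair $(A_N,B_N)$ is asymptotically free (in the sense of expected normalized traces), with $A_N\to\mu$ and $B_N\to\nu$, the semicircle law of variance $\sigma^2$; this is the standard asymptotic freeness of a deterministic matrix from an orthogonally invariant Wigner matrix (see e.g.~Chapter~4 of \cite{MingoSpeicher}), and it yields $\varphi_N(x_N^p\,y_N^q)\to\varphi(x^p\,y^q)$ for all $p,q$, which is exactly \eqref{eqn:91a}.

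With \eqref{eqn:91a} in hand, Corollary \ref{cor:93} becomes available. Because $A_N$ is symmetric and $g$ is real-valued, the element $A_N-g(C_N)$ is selfadjoint, so $\|A_N-g(C_N)\|_{2,\varphi_N}^2=\varphi_N\bigl((A_N-g(C_N))^2\bigr)=\mathbb{E}[N^{-1}\mathrm{Tr}((A_N-g(C_N))^2)]$; thus \eqref{eqn:93a} reads
\[
\lim_{N\to\infty}\mathbb{E}[N^{-1}\mathrm{Tr}((A_N-g(C_N))^2)]=\|a-g(a+b)\|_{2,\varphi}^2,\qquad\forall\,g\in C_{\mathrm{pol}}(\bR).
\]
The second step is to confirm that the denoiser $\delta$ itself lies in $C_{\mathrm{pol}}(\bR)$, so that this identity may legitimately be applied to $g=\delta$ as well as to the competing $g$. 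Here I would invoke Example \ref{example:47}: since $\nu$ is semicircular, $\mu\boxplus\nu$ is absolutely continuous, its Cauchy transform extends continuously to $\bR$, and $\delta(t)=t-2\pi\sigma^2H_{\mu\boxplus\nu}(t)$ with $H_{\mu\boxplus\nu}(t)=\pi^{-1}\mathrm{Re}\,G_{\mu\boxplus\nu}(t)$. Continuity of the boundary Cauchy transform then makes $\delta$ continuous on all of $\bR$, and since $G_{\mu\boxplus\nu}(t)\to0$ as $|t|\to\infty$ the function $\delta$ has at most linear growth, so indeed $\delta\in C_{\mathrm{pol}}(\bR)$.

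Applying the displayed limit both to the given $g$ and to $\delta$ (so that both limits genuinely exist), the claimed inequality collapses to the purely $W^{*}$-probabilistic statement $\|a-g(a+b)\|_{2,\varphi}\geq\|a-\delta(a+b)\|_{2,\varphi}$. Finally, Example \ref{example:47} identifies $\delta(a+b)=E(a\mid a+b)$, so this is exactly the orthogonal-projection property \eqref{eqn:Z2b} of the conditional expectation onto the von Neumann algebra generated by $a+b$, to which $g(a+b)$ belongs. I expect the main obstacle to be the second step: one must genuinely know that $\delta$ is continuous with controlled growth before the moment-based convergence of Proposition \ref{continuity} can be invoked for $g=\delta$ rather than merely for smooth test functions, and this is precisely where the semicircularity of the noise is essential, through the regularity of $\mu\boxplus\nu$ (ultimately Biane's analysis of free convolution with a semicircle). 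A lighter secondary point is making the passage from the assumed scalar moment convergence of $A_N$ to the joint convergence \eqref{eqn:91a} fully rigorous, which rests on the cited asymptotic freeness.
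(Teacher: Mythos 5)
Your proposal is correct and follows essentially the same route as the paper's proof: the same Borel-ncps setup from Example \ref{example:X9}, asymptotic freeness of $(A_N,B_N)$ to verify \eqref{eqn:91a}, Corollary \ref{cor:93} for the $L^2$-convergence and the projection inequality, and Example \ref{example:47} together with continuity of $H_{\mu\boxplus\nu}$ (via Biane's regularity results for convolution with a semicircle) to identify $\delta(a+b)=E(a\mid a+b)$ and place $\delta$ in $C_{\mathrm{pol}}(\bR)$. Your treatment of the growth condition on $\delta$ (using $G_{\mu\boxplus\nu}(t)\to 0$ at infinity) is in fact slightly more explicit than the paper's, which only invokes continuity of the Hilbert transform.
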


\begin{proof}
    We apply Corollary \ref{cor:93} to $x_N=A_N$ and $y_N=C_N$. More precisely, we work in $(\mathcal{A}_N=M_N(L^{\infty -}(\Omega,\mathcal{F},\mathbb{P})),
    \varphi_N=\mathbb{E}N^{-1}\text{Tr})$, which is a Borel-ncps (see Example \ref{example:X9})
    and observe that $(x_N,y_N)$ is a pair of elements in $\mathcal{A}_N^{\mathrm{sa}}$.
    It is well-known that $(x_N,y_N)_{N\in \mathbb{N}}$ are asymptotically free: in particular, 
    condition \eqref{eqn:91a} is satisfied for some pair $(x=a,y=a+b)$ of elements built 
    from freely independent selfadjoint elements $a,b$ in a $W^{*}$-probability space 
    $(\mathcal{A},\varphi)$. Here, by assumption, $a$ is distributed according to $\mu$ and,
    by Wigner's Theorem, $b$ is distributed according to Wigner semicircle law with variance
    $\sigma^2$, which is denoted by $\nu$.  Corollary \ref{cor:93} then implies that, for all
    $g \in C_{\mathrm{pol}} ( \bR )$,
    \begin{align*}
    \lim_{N\to \infty} \mathbb{E}[N^{-1} \text{Tr} \left((A_N-g(C_N))^2\right)]
    & = \lim_{N \to \infty}|| \, x_N - g(y_N) \, ||_{2, \varphi_N}^2  \\
    & = || \, x - g(y) \, ||_{2, \varphi}^2
      \geq || \, x - E(x \, | \, y ) \, ||_{2, \varphi}^2.
    \end{align*}
    It follows from Example \ref{example:47} that 
    $E(x\vert y)=y-2\sigma^2\pi H_{\mu\boxplus \nu}(y)$, which is precisely $\delta(y)$
    for the function $\delta:\mathbb{R}\to \mathbb{R}$ predicted in 
    \cite{BunAllezBouchaudPotters}. Since  the Hilbert transform $H_{\mu\boxplus \nu}$ of
    the free additive convolution $\mu\boxplus \nu$ is continuous (see \cite{BianeConvolution}),
    the function $\delta:\mathbb{R}\to \mathbb{R}$ is itself in $C_{\mathrm{pol}} ( \bR )$ and 
    \begin{align*}
    || \, x - E(x \, | \, y ) \, ||_{2, \varphi}^2 
    &  =|| \, x - \delta(y) \, ||_{2, \varphi}^2
       =\lim_{N \to \infty}|| \, x_N - \delta(y_N) \, ||_{2, \varphi_N}^2  \\
    & =\lim_{N\to \infty}\mathbb{E}[N^{-1}\text{Tr}\left((A_N-\delta(C_N))^2\right)].
    \end{align*}
\end{proof}

Similarly, when the multiplicative noise $B_N$ is Wishart-distributed with 
aspect ratio converging towards $\gamma\in (0, \infty)$, \cite{BunAllezBouchaudPotters} 
recovers the estimator previously derived by Ledoit and P\'ech\'e in \cite{LedoitPeche}
that is reviewed in the following subsection.

\vspace{10pt}
    
\subsection{Covariance matrix estimation}	
Another important instance of the general matrix denoising problem is the so-called 
covariance matrix estimation. Let $X_1,\ldots ,X_p$ be independent identically distributed 
random vectors in $\mathbb{C}^N$ with independent standardized entries, and let $\Sigma_N$
be a positive semi-definite Hermitian $N\times N$ matrix. The goal is to estimate
$\Sigma_N$ from the observation of the independent identically distributed random vectors
$\Sigma_N^{1/2}X_1,\ldots ,\Sigma_N^{1/2}X_p$ with common covariance matrix $\Sigma_N$.
It is well-known that the sample covariance matrix 
\[
Y_N=p^{-1}\sum_{i=1}^p(\Sigma_N^{1/2}X_i)(\Sigma_N^{1/2}X_i)^*
\]
is a consistent estimator of $\Sigma_N$ in the regime where $N$ is fixed and $p$ is large.
But in the regime where $p$ and $N$ are both large with ratio $p/N$ converging to 
$\gamma\in (0,\infty)$, the sample covariance matrix $Y_N$ becomes a poor estimator of 
$\Sigma_N$: it is empirically observed that the eigenvalues of the sample covariance 
matrix $Y_N$ spread out significantly compared to the eigenvalues of the true covariance
matrix $\Sigma_N$. Based on this observation, Ledoit and P\'ech\'e suggested in
\cite{LedoitPeche} to design estimators of $\Sigma_N$ which shrink eigenvalues of $Y_N$ 
without modifying the corresponding eigenvectors, hence are of the form $\delta(Y_N)$, where 
$\delta:\mathbb{R}\to \mathbb{R}$ is a bounded measurable function. At fixed $N$, there is an 
optimal (in the $L^2$ sense) bounded measurable function $\delta_N:\mathbb{R}\to \mathbb{R}$.
	
	Assuming that the $12$th moment of entries of $X_1,\ldots ,X_p$ is bounded by some constant 
    independent of $N$ and $p$, that $\Sigma_N$ is positive definite, with empirical spectral 
    distribution weakly converging to some probability measure $\mu$ with compact support included 
    in $(0, \infty)$, and assuming that $\gamma\neq 1$, Ledoit and P\'ech\'e proved that the
    sequence $(\delta_N)_{N\in \mathbb{N}}$ converges to the function 
    $\delta:\mathbb{R}\to \mathbb{R}$ defined by: 
	
	\begin{equation*}
		\delta(t)=\left\{ \begin{array}{c}
			\frac{t}{|1-\gamma^{-1}+\gamma^{-1}tG(t)|^2},\quad \mbox{if $t>0$}\\
			-\frac{\gamma}{(1-\gamma)\underline{G}(0)}\mathbf{1}_{\gamma<1},\quad \mbox{if $t=0$,}\\
		\end{array}  \right.
	\end{equation*}
	where $G$ is the Stieltjes transform of the limiting empirical spectral distribution 
    of $Y_N$ and $\underline{G}(t)=(1-\gamma^{-1})t^{-1}+\gamma^{-1}G(t)$. 

This result also has an interpretation in terms of free denoising: one may argue as in the 
proof of Proposition \ref{prop:94} that $(x_N,y_N)=(\Sigma_N,Y_N)$ defines for each 
$N\in \mathbb{N}$ a pair of selfadjoint elements in a Borel-ncps in such a way that 
condition (\ref{eqn:91a}) is satisfied for some pair $(x,y)=(a,a^{1/2}ba^{1/2})$ of elements
built from freely independent positive elements $a,b$ in a $W^*$-probability space. Here,
$a$ is distributed according to $\mu$ and $b$ is distributed according to a dilation of 
the free Poisson distribution with parameter $\gamma$. The function $\delta$ found in 
\cite{LedoitPeche} and reviewed above coincides, up to a dilation factor, to the
free denoiser computed in Example \ref{ex:57}; one may then use Corollary \ref{cor:93} 
to conclude that no $g \in C_{\mathrm{pol}} ( \bR )$ can outperform $\delta$ as an 
asymptotic denoiser in the large $N$ limit.

\vspace{0.5cm}

\noindent
{\bf Acknowledgements.}


\noindent
A preliminary version of the present paper was given at a workshop 
on ``Algebraic aspects of random matrices'' at CIRM Luminy in September 2024;
we are grateful to the organizers of the workshop for the opportunity to participate 
and to present this work.  We would also like to thank Franck Gabriel for pointing 
out to us, at that same workshop, the connection to the Tweedie's formula. 


\noindent
M.F. acknowledges a useful discussion with Guilhem Semerjian at the 
Institut Henri Poincar\'e in March 2025, providing us with some enlightening 
comments on the work from \cite{BunAllezBouchaudPotters, Semerjian}, and also 
bringing to our attention the book reference \cite{PottersBouchaud}.

	\appendix
	
\section{Multiplicative convolution on the unit circle}
	
Unitary operators do not fit into the framework of this paper, where we consider selfadjoint operators. However we point out that a similar development to the one from Sections 3, 4 and 5 can be done in the case of multiplication of freely independent unitary random variables. For the purpose of this section we fix $u,v$ freely independent unitary random variables in a $W^*$-probability space, with respective distributions $\mu$ and $\nu$. The distribution of $uv$ is the free multiplicative convolution of $\mu$ and $\nu$, also denoted by $\mu\boxtimes \nu$. One can also define the overlap measure $\mu^{(ov)}_{u,uv}$, but in this case this measure is defined on the Borel sigma-algebra of $\mathbb{T}^2$.
	
	In \cite{Biane} Biane proved analogous subordination results in the case of multiplicative convolution on the unit circle. More precisely, for $|z|<1$ we have : \[
	E\left(\frac{z u v}{1-zuv}\Big|u\right)=\frac{\omega(z)u}{1-\omega(z)u}.
	\]
	Similarly as before, for any $s\in\mathbb{T}$, the formula 
	\[\psi_{k_s}(z)=\frac{\omega(z) s}{1-\omega(z)s},\quad |z|<1,
	\]
	defines the moment transform of a probability measure $k_s$ supported on $\mathbb{T}$, that is \[\psi_{k_s}(z)=\int_{\mathbb{T}}\frac{z t}{1-zt}k_s(dt).\]
	One can retrieve the measure $k_s$ as a weak limit via
	\[2 \pi d k_s(t)=\lim_{r\uparrow 1}( 2 Re(\psi_{k_s}(r \overline{t}))+1 )dt,\]
	
	where $dt$ is the Lebesgue measure on $\mathbb{T}$.
	
	For any bounded Borel function $g$ on $\mathbb{T}$ we have
	\[E(g(uv)|u)=\cK g(u)\]
	where kernel $\cK$ is defined as
	\[\cK g(s)=\int_{\mathbb{T}} g(t)k_s(dt).\]
	
	As in the additive and multiplicative case on $\bR_+$, Biane's subordination gives the first disintegration of $\mu^{(ov)}_{u,uv}$, namely we have
	\begin{align*}
		\varphi(f(u) g(uv))&=\varphi(f(u)E(g(uv)|u))=\varphi(f(u)\cK g(u))=\int_{\mathbb{T}} f(s) \cK g(s) d\mu(s)\\
		&=\int_{\mathbb{T}}f(s)\int_{\mathbb{T}}g(t) dk_s(t) d\mu(s).
	\end{align*}
	
	Regularity and atoms of free multiplicative convolution on the unit circle together with boundary behaviour of subordination functions are well understood (see \cite{BelinschiAtomsMult,Belinschi}). Recently in \cite{BelinschiBercoviciHo} the authors showed that the distribution of $uv$ has no singular continuous part. Using similar methods as in Sections 4 and 5, one can show that for $\mu$-almost every $s\in \mathbb{T}$, the measure $k_s$ is absolutely continuous with respect to $\mu\boxtimes \nu$. We have the following.
	\begin{proposition}
		With notation as above suppose that $\mu$ and $\nu$ are not point masses. There exists a subset $U\subset \mathbb{T}$ of full measure with respect to $(\mu\boxtimes\nu)^{ac}$  such that  for $\mu$-almost every $s \in \mathbb{T}$, $k_s(dt)=o(s,t)\mu\boxtimes \nu(dt)$, where $o:\mathrm{supp}(\mu)\times \mathrm{supp}(\mu\boxtimes\nu)\to \mathbb{R}_+$ is 
			defined (in $\mu\times \mu\boxtimes \nu$-almost everywhere sense) by the following formula:
		\begin{align*}
			\forall s\in \mathrm{supp}(\mu), o(s,t)=\begin{cases}
				\frac{1}{2\pi f_{\mu\boxtimes \nu}(t)} Re\left(2\frac{\omega\left(\overline{t}\right) s}{1-\omega\left(\overline{t}\right)s}+1\right)\qquad&\mbox{if }t\in U,\\
				\frac{1}{\mu(s)}1_{\omega(\overline{t})=\overline{s}}&\mbox{if } t \mbox{ is an atom of } \mu\boxtimes\nu.
			\end{cases}
		\end{align*}
	\end{proposition}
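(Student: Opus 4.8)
The plan is to transplant the proofs of Proposition \ref{prop:44} and Proposition \ref{prop:53} from the real line to the unit circle, replacing the Cauchy/Stieltjes-inversion machinery by its Herglotz/Poisson counterpart. The natural starting point is the given moment-transform identity $\psi_{k_s}(z)=\frac{\omega(z)s}{1-\omega(z)s}$, from which I would form the Herglotz function
\[
\Phi_s(z):=2\psi_{k_s}(z)+1=\frac{1+\omega(z)s}{1-\omega(z)s},\qquad |z|<1,
\]
whose real part is exactly the quantity occurring in the retrieval formula $2\pi\,dk_s(t)=\lim_{r\uparrow 1}\bigl(2\operatorname{Re}(\psi_{k_s}(r\overline t))+1\bigr)\,dt$. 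A one-line computation shows that for a point mass $\delta_w$ one has $2\operatorname{Re}(\psi_{\delta_w}(r\overline t))+1=\frac{1-r^2}{1-2r\cos(\arg w-\arg t)+r^2}$, i.e.\ the Poisson kernel; hence the retrieval formula is precisely the Poisson-integral recovery of $k_s$, and the Fatou-type boundary theory for Poisson integrals will let me read off both the absolutely continuous density and the atoms of $k_s$ from the boundary values of $\Phi_s$.

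First I would import the circle analogues of the regularity facts recorded in Review \ref{rev:51}: that $\mu\boxtimes\nu$ on $\mathbb{T}$ has at most finitely many atoms $\gamma_i=\alpha_i\beta_i$ with the matching condition $\mu(\{\alpha_i\})+\nu(\{\beta_i\})>1$, that it has no singular continuous part (this is where I would invoke \cite{BelinschiBercoviciHo}), and that the subordination function $\omega$ extends continuously to $\mathbb{T}$ with $\omega(\overline t)$ lying strictly inside the open disk $\mathbb{D}$ on a set $U$ of full $(\mu\boxtimes\nu)^{ac}$-measure, while $\omega(\overline{\gamma_i})=\overline{\alpha_i}$ lands on $\mathbb{T}$ at each atom (the analogues of Review \ref{rev:51}.$2^o$ and $5^o$, drawn from \cite{BelinschiAtomsMult,Belinschi}). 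Then, exactly as in Proposition \ref{prop:44}, the support argument of Remark \ref{rem:Z4}, applied to the first disintegration of $\mu^{(ov)}_{u,uv}$ by the marginal $\mu$ displayed in the Appendix, shows that for $\mu$-almost every $s$ the measure $k_s$ is carried by the support of $\mu\boxtimes\nu$; I would fix such an $s$ for the remainder of the argument.

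On the set $U$, since $\omega(\overline t)\in\mathbb{D}$ the function $\Phi_s$ has a finite continuous boundary value there, so the Poisson recovery identifies the density of the a.c.\ part of $k_s$ as $\frac{1}{2\pi}\operatorname{Re}\Phi_s(\overline t)$; dividing by $f_{\mu\boxtimes\nu}(t)$ yields the first branch of the asserted formula for $o(s,t)$. For an atom $t=\gamma$ I would detect the $k_s$-mass through the circle atom-formula $k_s(\{\gamma\})=\lim_{z\to\overline\gamma}(1-z\gamma)\psi_{k_s}(z)$ (the multiplicative counterpart of Lemma \ref{lem:22}.$2^o$), writing
\[
(1-z\gamma)\psi_{k_s}(z)=\bigl[(1-z\gamma)\psi_{\mu\boxtimes\nu}(z)\bigr]\cdot\frac{\omega(z)s/(1-\omega(z)s)}{\psi_\mu(\omega(z))}
\]
and using $\psi_{\mu\boxtimes\nu}=\psi_\mu\circ\omega$. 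The bracket tends to $(\mu\boxtimes\nu)(\{\gamma\})$, while the remaining factor tends to $1/\mu(\{s\})$ exactly when $\omega(\overline\gamma)=\overline s$ (both numerator and denominator then acquiring matching simple poles from $1-\omega(z)s\to 0$) and to $0$ otherwise; this parallels the derivation of \eqref{eqn:43x}. Dividing by $(\mu\boxtimes\nu)(\{\gamma\})$ gives the second branch $\frac{1}{\mu(\{s\})}\mathbf{1}_{\omega(\overline t)=\overline s}$. Since the atomic parts of $k_s$ and of $\mu\boxtimes\nu$ sit on the same finite set and neither measure has a singular continuous part, this simultaneously establishes $k_s\ll\mu\boxtimes\nu$ and identifies the Radon--Nikodym derivative with the stated $o(s,\cdot)$.

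The hard part will be the boundary regularity of $\omega$ on $\mathbb{T}$: justifying the continuous extension with image strictly inside $\mathbb{D}$ on $U$, the landing $\omega(\overline{\gamma_i})=\overline{\alpha_i}$ at the atoms, and the absence of a singular continuous part for $k_s$. On the real line these ingredients were packaged in Review \ref{review:43} and Review \ref{rev:51} and cited from \cite{Belinschi2,Belinschi,BelinschiAtomsMult,JiRegularityMult}; on the circle they must be assembled from \cite{BelinschiAtomsMult,Belinschi,BelinschiBercoviciHo}, and the verification that $\omega(\overline t)$ stays strictly inside $\mathbb{D}$ on $U$ will require the circle version of the two-subordination-function identity used in the proofs of Review \ref{review:43}.$5^o$ and Review \ref{rev:51}.$5^o$.
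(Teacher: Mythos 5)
Your proposal is correct and takes essentially the same route the paper intends: the paper offers no separate proof of this appendix proposition beyond the remark that it follows ``using similar methods as in Sections 4 and 5,'' and your plan --- Remark \ref{rem:Z4} to localize $k_s$, Poisson/Herglotz inversion of $2\psi_{k_s}+1$ on $U$ where $\omega(\overline{t})\in\mathbb{D}$, the atom formula $k_s(\{\gamma\})=\lim_{z\to\overline{\gamma}}(1-z\gamma)\psi_{k_s}(z)$ combined with $\psi_{\mu\boxtimes\nu}=\psi_\mu\circ\omega$, and the regularity inputs from \cite{BelinschiAtomsMult,Belinschi,BelinschiBercoviciHo} --- is precisely that transplantation of Propositions \ref{prop:44} and \ref{prop:53}. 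The one point to phrase carefully is that Remark \ref{rem:Z4} should be applied to the full-$(\mu\boxtimes\nu)$-measure set $\Gamma\cup U$ (as in the proof of Proposition \ref{prop:44}), not merely to $\mathrm{supp}(\mu\boxtimes\nu)$, since that is what forces the singular part of $k_s$ onto the finite atom set $\Gamma$ and hence yields $k_s\ll\mu\boxtimes\nu$.
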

	
	\begin{theorem}
			Let $u,v$ be freely independent unitary random variables in a $W^*$-probability space, with respective distributions $\mu$ and $\nu$ not being point masses. Then $d\mu^{(ov)}_{u,uv}(s,t)=o(s,t) \mu(ds)\,\mu\boxtimes\nu(dt)$, where $o$ is defined in the previous proposition. 
	\end{theorem}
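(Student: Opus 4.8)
The plan is to mimic the proof of Theorem \ref{thm:45}, with Biane's additive subordination replaced by its multiplicative counterpart on the unit circle and with the absolute continuity of $k_s$ with respect to $\mu\boxtimes\nu$ (established in the preceding proposition) playing the central role. Concretely, I would introduce the candidate measure $\rho$ on $\mathbb{T}^2$ defined by
\[
d\rho(s,t) = o(s,t)\,\mu(ds)\,\mu\boxtimes\nu(dt),
\]
and then verify that $\rho$ fulfils the defining property of the overlap measure $\mu^{(ov)}_{u,uv}$ (the analogue on $\mathbb{T}^2$ of Equation (\ref{eqn:Y1a})). By the uniqueness part of the overlap measure construction (the circle analogue of Proposition and Definition \ref{prop:Y1}) this forces $\rho=\mu^{(ov)}_{u,uv}$, which is precisely the asserted identity.

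First I would record, from the preceding proposition, that for $\mu$-almost every $s\in\mathbb{T}$ one has $k_s\ll\mu\boxtimes\nu$ with Radon--Nikodym derivative $o(s,\cdot)$, so that for every bounded Borel $g:\mathbb{T}\to\mathbb{C}$,
\[
\int_{\mathbb{T}} g(t)\,o(s,t)\,d(\mu\boxtimes\nu)(t) = \int_{\mathbb{T}} g(t)\,dk_s(t),
\qquad \text{for $\mu$-a.e. } s.
\]
Then, for arbitrary bounded Borel $f,g:\mathbb{T}\to\mathbb{C}$, I would compute $\int_{\mathbb{T}^2} f(s)g(t)\,d\rho(s,t)$ by Fubini's theorem: the inner integral over $t$ equals $f(s)\int_{\mathbb{T}} g(t)\,dk_s(t)$ for $\mu$-a.e.\ $s$, and integrating against $\mu(ds)$ yields $\int_{\mathbb{T}} f(s)\bigl[\int_{\mathbb{T}} g(t)\,dk_s(t)\bigr]\,d\mu(s)$. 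By the disintegration of the overlap measure with respect to its first marginal $\mu$ --- namely the identity $\varphi(f(u)g(uv)) = \int_{\mathbb{T}} f(s)\int_{\mathbb{T}} g(t)\,dk_s(t)\,d\mu(s)$ obtained above from the multiplicative subordination on $\mathbb{T}$ --- this equals $\varphi(f(u)g(uv))$. Thus $\rho$ satisfies the overlap-measure identity, and uniqueness gives $\rho=\mu^{(ov)}_{u,uv}$.

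The genuinely substantive analytic input --- the boundary regularity of $\mu\boxtimes\nu$ on $\mathbb{T}$ (finiteness of the atoms, absence of a singular continuous part via \cite{BelinschiBercoviciHo}, positivity of the density on a full-measure open set) together with the continuous extension of the subordination function, all drawn from \cite{BelinschiAtomsMult, Belinschi} --- has already been absorbed into the preceding proposition. Consequently I do not expect a serious obstacle at this stage: the only points requiring care are the joint measurability of $(s,t)\mapsto o(s,t)$ and the applicability of Fubini's theorem, both of which are automatic because $\mathbb{T}$ is compact and every measure in sight is a probability measure, so that $f(s)g(t)$ is bounded and hence $\rho$-integrable. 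The ``for $\mu$-a.e.\ $s$'' qualifier in the disintegration identity is harmless, since it only alters the inner integral on a $\mu$-null set and thus leaves the outer integral against $\mu$ unchanged.
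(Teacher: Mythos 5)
Your proposal is correct and follows exactly the route the paper intends: the paper omits an explicit proof of this theorem, pointing instead to the proof of Theorem \ref{thm:45}, which is precisely your argument (define the candidate measure with density $o$, use the $\mu$-a.e.\ Radon--Nikodym identity $k_s(dt)=o(s,t)\,\mu\boxtimes\nu(dt)$ from the preceding proposition together with Fubini and the Biane-subordination disintegration $\varphi(f(u)g(uv))=\int_{\mathbb{T}}f(s)\bigl[\int_{\mathbb{T}}g(t)\,dk_s(t)\bigr]d\mu(s)$, then invoke uniqueness of the overlap measure). Your handling of the a.e.\ qualifier and of measurability matches the paper's treatment in the additive case, so there is nothing to add.
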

	
	\begin{corollary}
		The result above implies that for every $f\in\mathrm{Bor}_b ( \mathbb{T} )$ we have
		\begin{align*}
			\bE[ f(u) \mid uv ] = h(uv),
		\end{align*}
		where $h:\Spec(uv)\to \mathbb{C}$ is defined $\mu\boxtimes\nu$-almost everywhere by
			\begin{align*}
				h(t)=\begin{cases}
					\frac{1}{2 \pi f_{\mu\boxtimes\nu}(t)}\int_{\mathbb{T}}f(s)Re\left(2\frac{\omega\left(\overline{t}\right) s}{1-\omega\left(\overline{t}\right)s}+1\right)d\mu(s)\qquad &\mbox{ if } t\in U,\\
					f\left(\overline{\omega(\overline{t})}\right)&\mbox{ if } t \mbox{ is an atom of } \mu\boxtimes\nu.
				\end{cases}
		\end{align*}
	\end{corollary}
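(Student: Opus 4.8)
The plan is to follow verbatim the scheme used for the additive Corollary~\ref{cor:46} and the positive multiplicative Corollary~\ref{cor:55}, now transported to the circle $\mathbb{T}$. First I would invoke the theorem stated just above, which factors the overlap measure as $d\mu^{(ov)}_{u,uv}(s,t)=o(s,t)\,\mu(ds)\,\mu\boxtimes\nu(dt)$. Combining this with the defining relation of the overlap measure recorded at the opening of this appendix, namely $\varphi(f(u)g(uv))=\int_{\mathbb{T}^2} f(s)g(t)\,d\mu^{(ov)}_{u,uv}(s,t)$, and applying Fubini's theorem (legitimate since $f,g$ are bounded and $o$ is $\mu\times\mu\boxtimes\nu$-integrable, being the density of one probability measure against another), I would obtain
\[
\varphi(f(u)g(uv))=\int_{\mathbb{T}}\Bigl(\int_{\mathbb{T}} f(s)\,o(s,t)\,\mu(ds)\Bigr)g(t)\,\mu\boxtimes\nu(dt).
\]
Setting $h(t):=\int_{\mathbb{T}} f(s)\,o(s,t)\,\mu(ds)$, this reads $\varphi(f(u)g(uv))=\varphi(h(uv)g(uv))$ for every $g\in\mathrm{Bor}_b(\mathbb{T})$. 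Since $uv$ is a normal element, its functional calculus is available and the characterization \eqref{eqn:Z2c} of the conditional expectation onto the von Neumann algebra generated by $uv$ applies; this identity is therefore exactly what certifies $\bE[f(u)\mid uv]=h(uv)$.

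It then remains to evaluate $h$ using the two-branch formula for $o$ from the preceding proposition. On the absolutely continuous set $U$, substituting the value of $o(s,t)$ and pulling the $t$-dependent prefactor $\tfrac{1}{2\pi f_{\mu\boxtimes\nu}(t)}$ out of the $s$-integral reproduces the first line of the asserted formula directly, with no further computation. For $t$ an atom of $\mu\boxtimes\nu$, the atomic value $o(s,t)=\tfrac{1}{\mu(\{s\})}\,\mathbf{1}_{\omega(\overline t)=\overline s}$ turns the integral into a contribution concentrated at the single point $s_0=\overline{\omega(\overline t)}$; the mass factor $1/\mu(\{s_0\})$ then cancels the weight $\mu(\{s_0\})$, leaving $h(t)=f(s_0)=f\bigl(\overline{\omega(\overline t)}\bigr)$, which is the second line.

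The main obstacle, exactly as in the positive case, is the atomic branch: for the localization above to produce a well-defined value one must know that $s_0=\overline{\omega(\overline t)}$ is genuinely an atom of $\mu$, so that $1/\mu(\{s_0\})$ is finite and the cancellation is legitimate. This is supplied by the boundary regularity of multiplicative convolution on $\mathbb{T}$ (see \cite{BelinschiAtomsMult,Belinschi}, together with the absence of a singular continuous part established in \cite{BelinschiBercoviciHo}), which yields the continuous extension of $\omega$ to $\mathbb{T}$ and matches each atom $t$ of $\mu\boxtimes\nu$ with a corresponding atom $\overline{\omega(\overline t)}$ of $\mu$ — the circle analogue of the statement Review~\ref{rev:51}.$2^o$ that was used in the proof of Proposition~\ref{prop:53}. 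Everything else is the same bounded Fubini-and-Stieltjes bookkeeping already carried out in Sections~\ref{AdditiveFormulas} and~\ref{MultiplicativeFormulas}, so I expect the write-up to be short.
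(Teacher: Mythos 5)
Your proposal is correct and follows exactly the route the paper intends: the paper gives no separate proof for this corollary, stating that it follows from the preceding theorem by the same Fubini-plus-disintegration argument used for Corollaries \ref{cor:46} and \ref{cor:55}, which is precisely what you carry out (factor the overlap measure, integrate out $s$, identify $h(uv)$ via the characterization of conditional expectation, then evaluate the two branches of $o$). Your extra remark that the atomic branch requires $\overline{\omega(\overline t)}$ to be an atom of $\mu$, guaranteed by the regularity results of \cite{BelinschiAtomsMult,Belinschi}, matches the role played by Review \ref{rev:51}.$2^o$ in the positive multiplicative case.
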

	
\end{document}